\newcommand{\Z}{{\textsf{\textup{Z}}}}
\newtheorem{thm}{Theorem}
\newtheorem{cor}[thm]{Corollary}
\newtheorem{defi}[thm]{Definition}
\newtheorem{rem}[thm]{Remark}
\newtheorem{nota}[thm]{Notation}
\newtheorem{princ}[thm]{Principle}
\newtheorem{ack}[thm]{Acknowledgement}
\newtheorem*{tempo*}{Template}
\newcommand\be{\begin{equation}}
\newcommand\ee{\end{equation}} 
\def\bdefi{\begin{defi}\rm}
\def\edefi{\end{defi}}
\def\bnota{\begin{nota}\rm}
\def\enota{\end{nota}}
\def\FIVE{\Pi_{1}^{1}\text{-\textup{\textsf{CA}}}_{0}}
\def\SIX{\Pi_{2}^{1}\text{-\textsf{\textup{CA}}}_{0}}
\def\SIXK{\Pi_{k}^{1}\text{-\textsf{\textup{CA}}}_{0}^{\omega}}
\def\ATR{\textup{\textsf{ATR}}}
\def\ZFC{\textup{\textsf{ZFC}}}
\def\ZF{\textup{\textsf{ZF}}}
\def\L{\textsf{\textup{L}}}
 \def\r{\mathbb{r}}
\def\RCA{\textup{\textsf{RCA}}}
\def\({\textup{(}}
\def\){\textup{)}}
\def\RCAo{\textup{\textsf{RCA}}_{0}^{\omega}}
\def\ACAo{\textup{\textsf{ACA}}_{0}^{\omega}}
\def\ATRo{\textup{\textsf{ATR}}_{0}^{\omega}}
\def\WKL{\textup{\textsf{WKL}}}
\def\bye{\end{document}}
\def\N{{\mathbb  N}}
\def\Q{{\mathbb  Q}}
\def\R{{\mathbb  R}}
\def\SS{\textup{\textsf{S}}}
\def\di{\rightarrow}
\def\CBN{\textup{\textsf{CB}}\N}
\def\asa{\leftrightarrow}
\def\ACA{\textup{\textsf{ACA}}}
\def\QFAC{\textup{\textsf{QF-AC}}}
\def\FIN{\textup{\textsf{FIN}}}
\def\alt{\textup{\textsf{alt}}}
\def\QRQ{\textup{\textsf{QRQ}}}
\def\reg{\textup{\textsf{reg}}}
\def\FUT{\textup{\textsf{FUT}}}
\def\NCC{\textup{\textsf{NCC}}}
\def\INDX{\textup{\textsf{IND}}_{1}}
\def\INDY{\textup{\textsf{IND}}_{0}}
\def\boco{\textup{\textsf{Boco}}}
\def\PHM{\textup{\textsf{Pohm}}}
\def\Borel{\textup{\textsf{Borel}}}
\def\RUC{\textup{\textsf{RUC}}}
\def\CUC{\textup{\textsf{CUC}}}
\def\BUC{\textup{\textsf{BUC}}}
\def\fun{\textup{\textsf{fun}}}
\def\DCAA{\textup{\textsf{DCA}}}
\def\accu{\textup{\textsf{accu}}}
\def\HAR{\textup{\textsf{Harnack}}}
\def\cocode{\textup{\textsf{cocode}}}
\def\NIN{\textup{\textsf{NIN}}}
\def\NCC{\textup{\textsf{NCC}}}
\def\NBI{\textup{\textsf{NBI}}}
\def\closed{\textup{\textsf{closed}}}
\def\BOOT{\textup{\textsf{BOOT}}}
\def\IND{\textup{\textsf{IND}}}
\def\HBU{\textup{\textsf{HBU}}}
\def\WHBU{\textup{\textsf{WHBU}}}
\def\BW{\textup{\textsf{BW}}}
\def\BWC{\textup{\textsf{BWC}}}
\def\BWL{\textup{\textsf{BWL}}}
\def\seq{\textup{\textsf{seq}}}
\def\fin{\textup{\textsf{fin}}}
\def\LIN{\textup{\textsf{LIN}}}
\def\DCA{\Delta\textup{\textsf{-CA}}}
\def\MCT{\textup{\textsf{MCT}}}
\def\eps{\varepsilon}
\def\X{\textup{\textsf{X}}}
\def\ECF{\textup{\textsf{ECF}}}
\newcommand{\F}{{\bf F}}
\numberwithin{equation}{section}
\numberwithin{thm}{section}
\begin{document}
\title{Big in Reverse Mathematics: the uncountability of the reals}
%\author{Dag Normann}
%\address{Department of Mathematics, The University of Oslo, Norway} 
%%P.O. Box 1053, Blindern N-0316 Oslo
%\email{dnormann@math.uio.no}
\author{Sam Sanders}
\address{Institute for Philosophy II, RUB Bochum, Germany}
\email{sasander@me.com}
\keywords{Uncountability of $\R$, Reverse Mathematics, bounded variation, regulated, height function}
\subjclass[2010]{03B30, 03F35}
\begin{abstract}
The uncountability of $\mathbb{R}$ is one of its most basic properties, known far outside of mathematics. 
Cantor's 1874 proof of the uncountability of $\mathbb{R}$ even appears in the very first paper on set theory, i.e.\ a historical milestone. 
In this paper, we study the uncountability of $\R$ in Kohlenbach's \emph{higher-order} Reverse Mathematics (RM for short), in the guise of the following principle:
\[
\text{\emph{for a countable set $A\subset \mathbb{R}$, there exists $y\in \mathbb{R}\setminus A$.}}
\]
An important conceptual observation is that the usual definition of countable set -based on injections or bijections to $\N$- does not seem suitable  for the RM-study of mainstream mathematics; we also propose a suitable (equivalent over strong systems) alternative definition of countable set, namely \emph{union over $\N$ of finite sets}; the latter is known from the literature and closer to how countable sets occur `in the wild'.  
We identify a considerable number of theorems that are equivalent to the centred theorem based on our alternative definition.  
Perhaps surprisingly, our equivalent theorems involve most basic properties of the {Riemann integral}, regulated or bounded variation functions, Blumberg's theorem, and Volterra's early work circa 1881. 
Our equivalences are also \emph{robust}, promoting the uncountability of $\R$ to the status of `big' system in RM. 
\end{abstract}

\setcounter{page}{0}
\tableofcontents
\thispagestyle{empty}
\newpage

\maketitle
\thispagestyle{empty}

%0) The gauge integral is knee-high in the BV and regulated functions!  Better go and check it out!

%1)Add `uncountable' to measure theoretic Bloomberg, and $\NIN_{\alt}$. DONE?

%2)Thm 4.12, p 63 in Bartle:%

%3)$\NIN_{\alt}$ is equivalent to ``a regulated function has a indefinite integral that is diff at all but countable points''.

%4) one-sided derivative in FCT exists always (see iPad/mathstackexchange)

%5) define finite via heigth function look-alike ??

\section{Introduction}
\subsection{Summary}\label{intro}
Like Hilbert (\cite{hilbertendlich}), we believe the infinite to be a central object of study in mathematics. 
That the infinite comes in `different sizes' is a relatively new insight, due to Cantor around 1874 (\cite{cantor1}), in the guise of the \emph{uncountability of $\R$}, also known simply as \emph{Cantor's theorem}. 
We have previously studied the uncountability of $\R$ in the guise of the following \emph{third-order}\footnote{By definition, the uncountability of $\R$ is formulated in terms of arbitrary (third-order) mappings from $\R$ to $\N$.  For this reason, we think it best to study this principle in a framework that directly includes such mappings, as opposed to representing them via second-order objects.} principles.  
\begin{itemize}
\item $\NIN$: there is no injection from $[0,1]$ to $\N$.
\item $\NBI$: there is no bijection from $[0,1]$ to $\N$
\end{itemize}
In particular, as shown in \cite{dagsamX, dagsamXII, dagsamXIII}, the principles $\NBI$ and $\NIN$ are \emph{hard to prove} in terms of conventional\footnote{We discuss the notion of `conventional' comprehension in Section \ref{lll} where we introduce $\Z_{2}^{\omega}$: a (conservative) higher-order extension of second-order arithmetic $\Z_{2}$ involving `comprehension functionals' $\SS_{k}^{2}$ that decide arbitrary $\Pi_{k}^{1}$-formulas. Since $\Z_{2}^{\omega}$ cannot prove $\NIN$ (see \cite{dagsamX}) \emph{and} both are essentially third-order in nature, our claim `$\NIN$ is hard to prove' seems justified.\label{kabal}} comprehension, while the objects claimed to exist are \emph{hard to compute} in terms of the other data, in the sense of Kleene's computability theory based on S1-S9 (\cite{longmann, kleeneS1S9}).  
As shown in \cite{samcie22}, this hardness remains if we restrict the mappings in $\NIN$ and $\NBI$ to well-known function classes, e.g.\ based on bounded variation, Borel, upper semi-continuity, and quasi-continuity. 
Morover, \emph{many} basic third-order theorems imply $\NIN$ or $\NBI$, and the same at the computational level (see \cite{samcount, dagsamXIII, dagsamXII, dagsamXI,dagsamX, samNEO2, samcie22,samwollic22}).
Finally, $\NIN$ and $\NBI$ seem to be the weakest \emph{natural} third-order principles that boast all the aforementioned properties.  

\smallskip

For all these reasons, the study of the uncountability of $\R$ in \emph{Reverse Mathematics} (RM for short; see Section \ref{prelim}) seems like a natural enterprise.  
However, \emph{try as we might} we have not managed to obtain elegant equivalences for $\NIN$ and $\NBI$, working in Kohlenbach's higher-order \emph{Reverse Mathematics} (see Section \ref{prelim}).  % and Kleene's higher-order {computability theory} (\cite{longmann}).  

\smallskip

As argued in detail in Section \ref{othername}, our main problem is that countable sets that occur `in the wild' do not have injections (let alone bijections) to $\N$ that can be defined in weak logical systems.  By contrast, the (equivalent over $\ZF$ and weaker systems)  definition of countable set as in Definitions \ref{hoogzalielevenint} and \ref{deaddint}  is much more suitable for the development of higher-order RM and is central to this paper.  
\begin{defi}\label{hoogzalielevenint}
A set $A\subset \R$ is \emph{height countable} if there is a \emph{height function} $H:\R\di \N$ for $A$, i.e.\ for all $n\in \N$, $A_{n}:= \{ x\in A: H(x)<n\}$ is finite.
\end{defi}
%NEWW
\begin{defi}[Finite set]\label{deaddint}
Any $X\subset \R$ is \emph{finite} if there is $N\in \N$ such that for any finite sequence $(x_{0}, \dots, x_{N})$ of distinct reals, there is $i\leq N$ such that $x_{i}\not \in X$.
\edefi
The notion of `height function' can be found in the literature in connection to countability (\cites{demol, vadsiger, royco,komig, hux}), while `height countable' essentially amounts to \emph{union over $\N$ of finite sets}.
By contrast, we believe Definition \ref{deaddint} has not been studied in the literature.
Our move away from injections/bijections towards height functions and finite sets constitutes a `shift of definition' which has ample historical precedent in RM and constructive mathematics, as discussed in Remark~\ref{rightdef}.  We note that Kleene's quantifier $(\exists^{2})$ from Section \ref{lll} is needed to make Definition~\ref{deaddint} well-behaved, as discussed in more detail in Remark \ref{LEM}.

\smallskip

In more detail, we shall establish a large number of equivalences for the following principle, which is based on Definition \ref{hoogzalielevenint} and expresses that $[0,1]$ is uncountable:
\begin{itemize}
\item $\NIN_{\alt}$: the unit interval is not height countable.
\end{itemize}
%Perhaps surprisingly, our equivalent theorems involve most basic properties of the \emph{Riemann integral}, regulated or bounded variation functions, Blumberg's theorem, and Volterra's early work circa 1881. 
In particular, we show in Section \ref{main} that $\NIN_{\alt}$ is equivalent to the following natural principles, working in Kohlenbach's {higher-order Reverse Mathematics}, introduced in Section~\ref{prelim}.  
Recall that a \emph{regulated} function has left and right limits everywhere, as studied by Bourbaki for Riemann integration (see Section \ref{cdef}).
\begin{enumerate}
\renewcommand{\theenumi}{\roman{enumi}}
\item For regulated $f:[0,1]\di \R$, there is a point $x\in [0,1]$ where $f$ is continuous (or quasi-continuous, or lower semi-continuous, or Darboux).\label{fil1}
\item For regulated $f:[0,1]\di \R$, the set of continuity points is dense in $[0,1]$.\label{fil2}
\item For regulated $f:[0,1]\di [0,1]$ with Riemann integral $\int_{0}^{1}f(x)dx=0$, there is $x\in [0,1]$ with $f(x)=0$ (Bourbaki, \cite{boereng}*{p.\ 61, Cor.\ 1}).
\item (Volterra \cite{volaarde2}) For regulated $f,g:[0,1]\di \R$, there is $x\in [0,1]$ such that $f$ and $g$ are both continuous or both discontinuous at $x$. \label{volkert1}
\item (Volterra \cite{volaarde2}) For regulated $f:[0,1]\di \R$, there is either $q\in \Q\cap [0,1]$ where $f$ is discontinuous, or $x\in [0,1]\setminus \Q$ where $f$ is continuous.\label{volkert2}
\item For regulated $f:[0,1]\di \R$, there is $y\in (0,1)$ where $F(x):=\lambda x.\int_{0}^{x}f(t)dt$ is differentiable with derivative equal to $f(y)$.
\item For regulated $f:[0,1]\di \R$, there are $a, b\in  [0,1]$ such that $\{ x\in [0,1]:f(a)\leq f(x)\leq f(b)\}$ is infinite.
\item Blumberg's theorem (\cite{bloemeken}) restricted to regulated functions. \label{fil8}
\end{enumerate}
A full list of equivalences may be found in Section \ref{mainreg} while we obtain similar (but also very different) results for functions of \emph{bounded variation} in Section \ref{mainbv}.  
We introduce all required definitions in Section \ref{helim}.
% in Theorem~\ref{flunk2}.  
%We note that by \cite{dagsamXII}*{Theorem 18}, all above operations are \emph{hard\footnote{The functional $\SS_{k}^{2}$ from Section \ref{lll} can decide $\Pi_{k}^{1}$-formulas, but the centred operation is not computable in $\SS_{k}^{2}$ (or their union).  Kleene's $\exists^{3}$ from Section \ref{lll} computes the centred operation, but the former also yields full second-order arithmetic.\label{klank}} to compute} relative to the usual hierarchy based on comprehension.  
%An explanation for this phenomenon is also in Section \ref{prelim}.
Some of the above theorems, including items \eqref{volkert1} and \eqref{volkert2} in the list, stem from Volterra's early work (1881) in the spirit of -but predating- the Baire category theorem, as discussed in Section~\ref{vintro}.  

\smallskip
\noindent
Now, comparing items \eqref{fil1} and \eqref{fil2} suggests that our results are \emph{robust} as follows:
\begin{quote}
A system is \emph{robust} if it is equivalent to small perturbations of itself. (\cite{montahue}*{p.\ 432}; emphasis in original)
\end{quote}
Most of our results shall be seen to exhibit a similar (or stronger) level of robustness. 
In this light, we feel that the uncountability of $\R$ deserves the moniker `big' system in the way this notion is used in second-order RM, namely as boasting many equivalences from various different fields of mathematics. 
% in light of the aforementioned equivalences, the uncountability of $\R$ should  big\footnote{which means} in RM and robust\footnote{Antonio said so in \cite{montahue}} and stuff.  

\smallskip

Next, items \eqref{fil1}-\eqref{fil8} above imply $\NIN$ and are therefore \emph{hard to prove} in the sense of Footnote \ref{kabal}.  By contrast, we show in Section \ref{plif} that adding the extra condition `Baire 1', makes these items provable from (essentially) arithmetical comprehension.
While regulated functions are of course Baire 1, say over $\ZF$, there is no contradiction here as the statement \emph{a regulated function on the unit interval is Baire 1} already implies $\NIN$ (see \cite{dagsamXIV}*{\S2.8}).
Other restrictions of items \eqref{fil1}-\eqref{fil8}, e.g.\ involving {semi-continuity} or {Baire~2}, are still equivalent to $\NIN_{\alt}$, as shown in Section \ref{restri}. 

\smallskip

Finally, this paper deals with the RM of the uncountability of $\R$ while stronger `completeness' properties of the reals, namely related to measure and category, are studied in \cite{samRMBCT}.  
In particular, the latter paper develops the higher-order RM of the \emph{Baire category theorem} and Tao's \emph{pigeon hole principle} for measure spaces (\cite{taoeps}).  
We do not currently know of a principle weaker than the uncountability of $\R$ that yields (interesting) RM-equivalences.  

\subsection{Volterra's early work and related results}\label{vintro}
We introduce Volterra's early work from \cite{volaarde2} as it pertains to this paper, as well as related results. 

\smallskip

First of all, the Riemann integral was groundbreaking for a number of reasons, including its ability to integrate functions with infinitely many points of discontinuity, as shown by Riemann himself (\cite{riehabi}). 
A natural question is then `how discontinuous' a Riemann integrable function can be.  In this context, Thomae introduced the function $T:\R\di\R$ around 1875 in \cite{thomeke}*{p.\ 14, \S20}):
\be\label{thomae}
T(x):=
\begin{cases} 
0 & \textup{if } x\in \R\setminus\Q\\
\frac{1}{q} & \textup{if $x=\frac{p}{q}$ and $p, q$ are co-prime} 
\end{cases}.
\ee
Thomae's function $T$ is integrable on any interval, but has a dense set of points of discontinuity, namely $\Q$, and a dense set of points of continuity, namely $\R\setminus \Q$. 

\smallskip

The perceptive student, upon seeing Thomae's function as in \eqref{thomae}, will ask for a function continuous at each rational point and discontinuous at each irrational one.
Such a function cannot exist, as is generally proved using the Baire category theorem.  
However, Volterra in \cite{volaarde2} already established this negative result about twenty years before the publication of the Baire category theorem.

\smallskip

Secondly, as to the content of Volterra's paper \cite{volaarde2}, we find the following theorem on the first page, where a function is \emph{pointwise discontinuous} if it has a dense set of continuity points.
\begin{thm}[Volterra, 1881]\label{VOL}
There do not exist pointwise discontinuous functions defined on an interval for which the continuity points of one are the discontinuity points of the other, and vice versa.
\end{thm}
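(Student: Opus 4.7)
The plan is to follow Volterra's original nested-interval argument, which predates and indeed anticipates the Baire category theorem by some two decades. Write $C(h)$ and $D(h)$ for the continuity and discontinuity sets of a function $h$. Suppose, for contradiction, that $f,g$ are pointwise discontinuous on an interval $I$ with $C(f) = D(g)$ and $C(g) = D(f)$; in particular $C(f) \cap C(g) = \emptyset$, while by assumption both $C(f)$ and $C(g)$ are dense in $I$.

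I would then iteratively construct a nested sequence of non-degenerate closed subintervals
\[
I_0 \supset J_1 \supset I_2 \supset J_3 \supset \cdots
\]
such that the oscillation of $f$ on $I_{2n}$ is at most $1/2^{n}$ and the oscillation of $g$ on $J_{2n+1}$ is at most $1/2^{n}$. Given $J_{2n-1}$, density of $C(f)$ in $I$ yields an interior point $x$ of $J_{2n-1}$ at which $f$ is continuous; continuity then provides a closed subinterval $I_{2n} \subset J_{2n-1}$ containing $x$ on which $f$ oscillates by less than $1/2^{n}$. The construction of $J_{2n+1}\subset I_{2n}$ from density of $C(g)$ is entirely symmetric. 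A key easy observation is that shrinking an interval can only decrease the oscillation on it, so the oscillation bounds established at earlier stages are preserved throughout the construction.

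By the nested interval property there exists $x^{*}\in \bigcap_{n} I_{2n}\cap \bigcap_{n} J_{2n+1}$. At $x^{*}$ both oscillations are $0$, so $f$ and $g$ are both continuous at $x^{*}$; that is, $x^{*}\in C(f)\cap C(g)$, contradicting $C(f)\cap C(g)=\emptyset$. The main (and quite minor) obstacle is bookkeeping: at each stage one must select the subinterval so that its predecessor's interior is not exhausted, which is handled by choosing $x$ strictly inside and shrinking accordingly. If one were to formalize this argument in the higher-order RM framework developed later in the paper, the only additional ingredient required is the ability to evaluate oscillations and to pick interior points from the dense sets $C(f),C(g)$ — both entirely routine once Kleene's $(\exists^{2})$ is available, as will be the ambient assumption.
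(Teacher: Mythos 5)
Your nested-interval argument is correct, and it is essentially Volterra's original 1881 proof. Note, however, that the paper does not prove Theorem \ref{VOL} at all: it is recalled as a classical result with a citation to Volterra, and the paper's actual work concerns the \emph{restriction} of this statement to regulated functions, which is shown in Theorem \ref{flonk} to be equivalent to $\NIN_{\alt}$ over $\ACAo+\QFAC^{0,1}+\FIN$ via an entirely different route (building the counterexample function \eqref{modi2} from a putative covering of $[0,1]$ by finite sets, and conversely extracting the finite sets $X_{n}$ of \eqref{sameold} from a putative counterexample to Volterra). So as a classical proof of the classical statement your argument is fine and complete, modulo the standard bookkeeping you already acknowledge (choosing each interval strictly inside the interior of its predecessor so that the limit point has a neighbourhood inside every stage, whence both oscillations vanish there).

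Your closing claim is the one genuine misstep: formalizing this argument in the paper's higher-order RM framework is \emph{not} routine given only $(\exists^{2})$. Evaluating the oscillation of an arbitrary third-order $f:[0,1]\di\R$ on an interval involves a supremum over the reals, which $(\exists^{2})$ does not supply, and "picking interior points from the dense sets $C(f)$, $C(g)$" is precisely the step the paper shows to be strong: even for regulated functions, producing a single continuity point is equivalent to $\NIN_{\alt}$, hence unprovable in $\Z_{2}^{\omega}+\QFAC^{0,1}$. This is in fact the whole point of the paper's treatment of Volterra's theorem, so you should either drop that remark or invert it: the classical proof goes through in $\ZF$ (or $\Z_{2}^{\Omega}$), but its reliance on selecting continuity points is exactly what gives the restricted statement its reverse-mathematical strength.
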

Volterra then states two corollaries, of which the following is perhaps well-known in `popular mathematics' and constitutes the aforementioned negative result. 
\begin{cor}[Volterra, 1881]\label{VOLcor}
There is no $\R\di\R$ function that is continuous on $\Q$ and discontinuous on $\R\setminus\Q$. 
\end{cor}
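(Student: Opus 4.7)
My plan is to derive Corollary \ref{VOLcor} as an immediate consequence of Theorem \ref{VOL}, using Thomae's function $T$ from \eqref{thomae} as the second witness that makes Volterra's theorem applicable.

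First, I would argue that Thomae's function $T$ is pointwise discontinuous in the sense of Theorem \ref{VOL}, with continuity points \emph{exactly} $\R\setminus\Q$ and discontinuity points \emph{exactly} $\Q$. At any rational $p/q$ (in lowest terms), the value is $1/q>0$ while irrational points arbitrarily close to $p/q$ have value $0$, so $T$ is discontinuous at every rational. Conversely, given an irrational $x$ and $\varepsilon>0$, pick $N$ with $1/N<\varepsilon$; only finitely many rationals $p/q\in[x-1,x+1]$ have $q\leq N$, so a small enough neighbourhood of $x$ avoids all of them, forcing $|T(y)|<\varepsilon$ on that neighbourhood, and continuity at $x$ follows. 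Since $\R\setminus\Q$ is dense in $\R$, $T$ is pointwise discontinuous.

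Next, assuming toward a contradiction that some $f:\R\to\R$ is continuous on $\Q$ and discontinuous on $\R\setminus\Q$, the set of continuity points of $f$ contains the dense set $\Q$, so $f$ is also pointwise discontinuous. Moreover, by construction the continuity points of $f$ form exactly $\Q$, which is the set of discontinuity points of $T$, and the continuity points of $T$ form exactly $\R\setminus\Q$, which is the set of discontinuity points of $f$. Thus $f$ and $T$ together satisfy the hypothesis of Theorem \ref{VOL}, contradicting its conclusion. Hence no such $f$ exists.

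I expect no real obstacle here: the whole content of the corollary is packaged into Theorem \ref{VOL}, and the only genuine verification is the standard one that $T$ has the claimed continuity set. The only subtlety worth flagging, in the spirit of this paper, is that the verification that $T$ is continuous at each irrational implicitly uses that only finitely many fractions with bounded denominator lie in a bounded interval, which is unproblematic here but is exactly the sort of `finiteness within a countable set' consideration that motivates Definitions \ref{hoogzalielevenint} and \ref{deaddint}.
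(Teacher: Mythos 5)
Your proposal is correct and matches the paper's own route: the paper likewise derives the corollary from Theorem \ref{VOL} by pairing the hypothetical function with Thomae's function $T$ from \eqref{thomae}, whose continuity set is exactly $\R\setminus\Q$ (this is stated explicitly in the first paragraph of the proof of Theorem \ref{flonk}). Your verification of the continuity properties of $T$ and the application of Theorem \ref{VOL} are both sound.
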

%Clearly, $\QRQ$ follows from Theorem \ref{VOL} by considering Thomae's function, introduced around 1875 in \cite{thomeke}*{p.\ 14}) as follows:
%\[
%T(x):=
%\begin{cases} 
%0 & \textup{if } x\in \R\setminus\Q\\
%\frac{1}{q} & \textup{if $x=\frac{p}{q}$ and $p, q$ are co-prime} 
%\end{cases}.
%\]
%$0$ in case $x\in \R\setminus \Q$ and $1/q$ in case $x=p/q$ where $p, q$ are co-prime. 
Thirdly, we shall study Volterra's theorem and corollary restricted to regulated functions (see Section \ref{cdef}). 
The latter kind of functions are automatically `pointwise discontinuous' in the sense of Volterra.
%We will establish a number of equivalences (both in RM and computability theory) involving the uncountability of $\R$ and the restrictions of Theorem \ref{VOL} and Corollary \ref{VOLcor}, and related results.  

\smallskip

Fourth,  Volterra's results from \cite{volaarde2} are generalised in \cite{volterraplus,gaud}.  
The following theorem is immediate from these generalisations. 
\begin{thm}\label{dorki}
For any countable dense set $D\subset [0,1]$ and $f:[0,1]\di \R$, either $f$ is discontinuous at some point in $D$ or continuous at some point in $[0,1]\setminus D$. 
\end{thm}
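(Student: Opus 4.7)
The plan is to argue by contradiction in the style of Volterra's original 1881 construction, producing a continuity point of $f$ outside $D$ by nested intervals. Suppose for contradiction that $f$ is continuous at every point of $D$ while being discontinuous at every point of $[0,1]\setminus D$. Fix an enumeration $D=\{d_n:n\in \N\}$ of the countable dense set (in the height-countable setting, this enumeration is recovered by listing the finite approximants $A_n$ in order, which is where the quantifier $(\exists^{2})$ discussed in Section~\ref{lll} does the work).

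The main construction is a shrinking sequence of closed intervals $I_n\subset [0,1]$ of positive length such that (a) $I_{n+1}\subset I_n$, (b) $d_n\notin I_n$, and (c) the oscillation of $f$ on $I_n$ is at most $1/n$. Given $I_{n-1}$, because $D$ is dense we may pick some $d\in D$ in the interior of $I_{n-1}$; by the hypothesis, $f$ is continuous at $d$, so there is a closed neighbourhood of $d$ inside $I_{n-1}$ on which $f$ varies by less than $1/n$. Shrinking this neighbourhood to one side of $d_n$ (which is possible since a nondegenerate closed interval always contains a proper closed subinterval avoiding any prescribed point) yields the desired $I_n$. Any $y\in \bigcap_n I_n$, which exists by the nested intervals property for $[0,1]$, then satisfies $y\notin D$ (because $d_n\notin I_n$ for each $n$) while $f$ has oscillation $0$ at $y$, so $f$ is continuous at $y\in [0,1]\setminus D$, contradicting our standing hypothesis.

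The only delicate step is the inductive construction of $I_n$: one must formulate continuity of $f$ at $d\in D$ in a way that yields an \emph{explicit} closed subinterval with controlled oscillation, and one must pass from the enumeration of $D$ to the selection of a witness $d\in D\cap\mathrm{int}(I_{n-1})$ at each stage. Both sub-tasks are routine in the presence of $(\exists^2)$, since this functional decides whether any given real belongs to a given finite list and allows effective searches through rational approximations to a modulus of continuity; without such a functional the argument would be much more subtle, which is consistent with the overall theme of the paper that the uncountability of $\R$ is non-trivial in weak higher-order settings. The hardest part is therefore not conceptual but rather making sure that each choice in the nested-interval construction is carried out effectively enough for the enveloping system, so that the contradiction at $y\in [0,1]\setminus D$ genuinely goes through.
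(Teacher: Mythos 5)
Your nested-interval argument is the standard classical (Volterra-style) proof of this statement and it is essentially correct; note, however, that the paper does not actually prove Theorem \ref{dorki} at all --- it records it as ``immediate'' from the cited generalisations of Volterra's work --- so there is no in-paper proof to match your argument against. Two remarks. First, a small point of rigour: to conclude that $f$ is continuous at $y\in\bigcap_n I_n$ you need $y$ to lie in the \emph{interior} of each $I_n$, since the oscillation bound on the closed interval $I_n$ only controls $f$ on one side of $y$ if $y$ is an endpoint. Your construction delivers this automatically provided the closed neighbourhood of $d$ is taken inside the interior of $I_{n-1}$, but condition (a) should be strengthened to $I_{n+1}\subset\textup{int}(I_n)$ and this should be said explicitly.

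Second, and more substantively, your closing paragraph about effectivity is misleading. The version of Theorem \ref{dorki} that the paper actually formalises (item (d) of Theorem \ref{flonk}, restricted to regulated $f$ and height-countable $D$) is proved by a completely different and much shorter route: for regulated $f$ the discontinuity points form a union $\cup_{n\in\N}X_n$ of finite sets, so if $f$ were continuous exactly on $D=\cup_{n\in\N}D_n$ then $[0,1]=\cup_{n\in\N}(X_n\cup D_n)$ would be height countable, contradicting $\NIN_{\alt}$; no nested intervals or moduli of continuity appear. Your suggestion that $(\exists^2)$ makes the nested-interval construction ``effective enough for the enveloping system'' cannot be right: the paper shows that even the regulated case of this statement is \emph{equivalent} to $\NIN_{\alt}$ over $\ACAo+\QFAC^{0,1}+\FIN$ and hence unprovable in $\Z_2^{\omega}$, and the steps you call routine (enumerating a height-countable $D$, repeatedly extracting a $\delta$ from continuity at a chosen $d$) are precisely the ones that fail in weak systems. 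As a proof of the classical background theorem in $\ZFC$, though, your argument goes through.
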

Perhaps surprisingly, this generalisation (restricted to bounded variation or regulated functions) is still equivalent to the uncountability of $\R$.  The same holds for the related \emph{Blumberg's theorem} with the same restrictions.
\begin{thm}[Blumberg's theorem, \cite{bloemeken}]
For any $f:\R\di \R$, there is a dense subset $D\subset \R$ such that the restriction of $f$ to $D$, usually denoted $f_{\upharpoonright D}$, is continuous.  
\end{thm}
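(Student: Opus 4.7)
The plan is to adapt the classical Baire-category proof of Blumberg's theorem.

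First I would establish a partition lemma via \BCT: for any countable partition $\R=\bigsqcup_{i\in \N}A_{i}$ and any nonempty open interval $U\subset\R$, there exist $i_{0}\in\N$ and a nonempty open subinterval $V\subset U$ such that $A_{i_{0}}\cap V$ is dense in $V$. For if no such pair $(i_{0},V)$ existed, every $A_{i}\cap U$ would be nowhere dense in $U$, so $U=\bigcup_{i}(A_{i}\cap U)$ would be meager in itself, contradicting \BCT{} applied to the complete metric space $U$.

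Next, for each $n\geq 1$, partition $\R$ into the countable family of preimage pieces $A_{k}^{n}:=f^{-1}([k/n,(k+1)/n))$ with $k$ ranging over $\mathbb{Z}$. Any two points in the same $A_{k}^{n}$ have $f$-values within $1/n$ of one another, so these partitions provide a uniform modulus of oscillation at scale $1/n$.

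Fixing a countable base $(U_{j})_{j\in\N}$ of nonempty open intervals of $\R$, I would iterate the partition lemma along a diagonal enumeration of pairs $(j,n)$. At each step one selects a nonempty open $V_{j}^{n}\subset U_{j}$ together with an integer $k(j,n)$ so that $A_{k(j,n)}^{n}$ is dense in $V_{j}^{n}$ and the intervals refine appropriately as $n$ grows. From this combinatorial data one extracts a dense set $D\subset\R$ such that for every $x\in D$ and every $n$, there is a neighborhood $V\ni x$ with $D\cap V\subset A_{k}^{n}$ for a single index $k$. Then $f|_{D}$ has oscillation at most $1/n$ at each point of $D$ for every $n$, and so $f|_{D}$ is continuous.

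The main obstacle will be the coherent extraction of $D$ from the nested interval construction: the local partition membership witnessing small oscillation at scale $1/n$ must remain stable on the same dense set as $n$ varies, which requires interleaving the base enumeration with the scale parameter and making coordinated dependent choices of the indices $k(j,n)$. In the higher-order RM setting of the paper, these countable choices are $\exists^{2}$-definable and fit within systems built on Kleene's quantifier $(\exists^{2})$ together with \BCT, so the argument goes through once \BCT{} is available.
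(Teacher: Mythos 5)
First, a point of comparison: the paper does not prove this statement at all. Blumberg's theorem is quoted as classical background from \cite{bloemeken}, and the paper's actual results only concern its \emph{restriction} to regulated (or $BV$) functions, which is proved by a completely different route (via the dense set $C_{f}$ of continuity points). So your proposal can only be measured against the standard argument from the literature, whose general shape (Baire category applied to the countable partitions $A_{k}^{n}=f^{-1}([k/n,(k+1)/n))$, refined over a countable base) you have correctly identified.

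There is, however, a genuine gap at the step you yourself flag as the main obstacle, and it is not merely a bookkeeping issue. Your partition lemma only produces a piece $A^{n}_{k}$ that is \emph{dense} in some subinterval $V$, i.e.\ not nowhere dense; such a piece may perfectly well be meager, e.g.\ a countable dense subset of $V$. At the next scale the sets you need to split are subsets of $A^{n}_{k}\cap V$, and \BCT{} does not apply to a meager set: if $A^{n}_{k}\cap V$ is countable and $f$ is injective on it with well-separated values, then every scale-$(n+1)$ subpiece of it is nowhere dense and no subinterval admits a dense one, so the recursion dies. Re-applying the lemma to the full partition of $V$ at scale $n+1$ instead does not help either, because two dense subsets of $V$ can be disjoint, so the intersections $\bigcap_{n}A^{n}_{k(j,n)}$ in which $D$ must live can be empty; no coordination of the indices $k(j,n)$ alone repairs this. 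The missing idea is to strengthen the selection from ``dense in a subinterval'' to ``of second category in every subinterval'', which is available via the Banach category theorem: for each $E\subset\R$ the set of points at which $E$ is locally of second category is closed and differs from $E$ by a meager set, so a non-meager piece always yields an interval on which it is everywhere of second category. Applying this to the preimages $f^{-1}((p,q))$ for rational $p<q$ gives a residual set $H$ of points $x$ at which every such preimage containing $f(x)$ is locally of second category, and $D$ is then built as a countable dense subset of $H$ point by point; second-category-ness, unlike density, survives intersection with $H$ and with the finitely many constraints accumulated at earlier stages, which is exactly what makes that recursion go through.
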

\noindent
To be absolutely clear, the conclusion of Blumberg's theorem means that 
\[
(\forall x\in D, \eps>0)(\exists \delta>0)\underline{(\forall y\in D)}(|x-y|<\delta\di |f(x)-f(y)|<\eps)), 
\]
where the underlined quantifier marks the difference with `usual' continuity.
%We let the principle $\QRQ^{+}$ denote Theorem \ref{dorki} formulated using \emph{height countable} sets (see Section \ref{cdef}), including subscripts as in the previous paragraph. 
%Perhaps suprisingly, this generalisation is still equivalent to the original.  

%\smallskip
%
%In conclusion, the perceptive reader has already noted that most equivalent principles have a `Baire category theorem' flavour to them.  
%In particular, the intuitive notion of `small or negligible set' can be formalised in at least two (fairly independent) ways, namely based on `measure' and `density/meagre'.  
%The equivalences in this paper do not (and we believe cannot) involve the former, but heavily depend on the latter. 

\subsection{Preliminaries and definitions}\label{helim}
We briefly introduce \emph{Reverse Mathematics} in Section \ref{prelim}.
We introduce some essential axioms (Section \ref{lll}) and definitions (Section~\ref{cdef}).  A full introduction may be found in e.g.\ \cite{dagsamX}*{\S2}.
%Since Kleene's computability theory borrows heavily from type theory, we shall often use common notations from the latter; for instance, the natural numbers are type $0$ objects, denoted $n^{0}$ or $n\in \N$.  
%Similarly, elements of Baire space are type $1$ objects, denoted $f\in \N^{\N}$ or $f^{1}$.  An overview of this kind of notations is in Section \ref{appendisch}. 

%In Section \ref{crux}, we motivate our choice of definitions, Definition \ref{openset} in particular.  
\subsubsection{Reverse Mathematics}\label{prelim}
Reverse Mathematics (RM hereafter) is a program in the foundations of mathematics initiated around 1975 by Friedman (\cites{fried,fried2}) and developed extensively by Simpson (\cite{simpson2}).  
The aim of RM is to identify the minimal axioms needed to prove theorems of ordinary, i.e.\ non-set theoretical, mathematics. 

\smallskip

We refer to \cite{stillebron} for a basic introduction to RM and to \cite{simpson2, simpson1,damurm} for an overview of RM.  We expect basic familiarity with RM, in particular Kohlenbach's \emph{higher-order} RM (\cite{kohlenbach2}) essential to this paper, including the base theory $\RCAo$.   An extensive introduction can be found in e.g.\ \cites{dagsamIII, dagsamV, dagsamX, dagsamXI}.  % and elsewhere.  % and Section \ref{rmbt}. 
% NEW removed appendix
%\smallskip
%We have chosen to include a brief introduction as a technical appendix, namely Section \ref{RMA}.  
All undefined notions may be found in \cite{dagsamX, dagsamXI}, while we do point out here that we shall sometimes use common notations from the type theory.  For instance, the natural numbers are type $0$ objects, denoted $n^{0}$ or $n\in \N$.  
Similarly, elements of Baire space are type $1$ objects, denoted $f\in \N^{\N}$ or $f^{1}$.  Mappings from Baire space $\N^{\N}$ to $\N$ are denoted $Y:\N^{\N}\di \N$ or $Y^{2}$.

\subsubsection{Some comprehension functionals}\label{lll}
In second-order RM, the logical hardness of a theorem is measured via what fragment of the comprehension axiom is needed for a proof.  
For this reason, we introduce some axioms and functionals related to \emph{higher-order comprehension} in this section.
We are mostly dealing with \emph{conventional} comprehension here, i.e.\ only parameters over $\N$ and $\N^{\N}$ are allowed in formula classes like $\Pi_{k}^{1}$ and $\Sigma_{k}^{1}$.  

\smallskip
\noindent
First of all, the functional $\varphi$ in $(\exists^{2})$ is also \emph{Kleene's quantifier $\exists^{2}$} and is clearly discontinuous at $f=11\dots$ in Cantor space:
%This fact shall be repeated often.  
\be\label{muk}\tag{$\exists^{2}$}
(\exists \varphi^{2}\leq_{2}1)(\forall f^{1})\big[(\exists n^{0})(f(n)=0) \asa \varphi(f)=0    \big]. 
\ee
In fact, $(\exists^{2})$ is equivalent to the existence of $F:\R\di\R$ such that $F(x)=1$ if $x>_{\R}0$, and $0$ otherwise (see \cite{kohlenbach2}*{Prop.\ 3.12}).  
Related to $(\exists^{2})$, the functional $\mu^{2}$ in $(\mu^{2})$ is called \emph{Feferman's $\mu$} (see \cite{avi2}) and may be found -with the same symbol- in Hilbert-Bernays' Grundlagen (\cite{hillebilly2}*{Supplement IV}):
\begin{align}\label{mu}\tag{$\mu^{2}$}
(\exists \mu^{2})(\forall f^{1})\big[ (\exists n)(f(n)=0) \di [f(\mu(f))=0&\wedge (\forall i<\mu(f))(f(i)\ne 0) ]\\
& \wedge [ (\forall n)(f(n)\ne0)\di   \mu(f)=0]    \big]. \notag
\end{align}
We have $(\exists^{2})\asa (\mu^{2})$ over $\RCAo$ (see \cite{kohlenbach2}*{\S3}) and $\ACAo\equiv\RCAo+(\exists^{2})$ proves the same sentences as $\ACA_{0}$ by \cite{hunterphd}*{Theorem~2.5}. 

\smallskip

Secondly, the functional $\SS^{2}$ in $(\SS^{2})$ is called \emph{the Suslin functional} (\cite{kohlenbach2}):
\be\tag{$\SS^{2}$}
(\exists\SS^{2}\leq_{2}1)(\forall f^{1})\big[  (\exists g^{1})(\forall n^{0})(f(\overline{g}n)=0)\asa \SS(f)=0  \big].
\ee
The system $\FIVE^{\omega}\equiv \RCAo+(\SS^{2})$ proves the same $\Pi_{3}^{1}$-sentences as $\FIVE$ by \cite{yamayamaharehare}*{Theorem 2.2}.   
By definition, the Suslin functional $\SS^{2}$ can decide whether a $\Sigma_{1}^{1}$-formula as in the left-hand side of $(\SS^{2})$ is true or false.   We similarly define the functional $\SS_{k}^{2}$ which decides the truth or falsity of $\Sigma_{k}^{1}$-formulas from $\L_{2}$; we also define 
the system $\SIXK$ as $\RCAo+(\SS_{k}^{2})$, where  $(\SS_{k}^{2})$ expresses that $\SS_{k}^{2}$ exists.  
We note that the operators $\nu_{n}$ from \cite{boekskeopendoen}*{p.\ 129} are essentially $\SS_{n}^{2}$ strengthened to return a witness (if existant) to the $\Sigma_{n}^{1}$-formula at hand.  %  if it exists. 

\smallskip

\noindent
Thirdly, full second-order arithmetic $\Z_{2}$ is readily derived from $\cup_{k}\SIXK$, or from:
\be\tag{$\exists^{3}$}
(\exists E^{3}\leq_{3}1)(\forall Y^{2})\big[  (\exists f^{1})(Y(f)=0)\asa E(Y)=0  \big], 
\ee
and we therefore define $\Z_{2}^{\Omega}\equiv \RCAo+(\exists^{3})$ and $\Z_{2}^\omega\equiv \cup_{k}\SIXK$, which are conservative over $\Z_{2}$ by \cite{hunterphd}*{Cor.\ 2.6}. 
Despite this close connection, $\Z_{2}^{\omega}$ and $\Z_{2}^{\Omega}$ can behave quite differently, as discussed in e.g.\ \cite{dagsamIII}*{\S2.2}.   
The functional from $(\exists^{3})$ is also called `$\exists^{3}$', and we use the same convention for other functionals.

\subsubsection{Some basic definitions}\label{cdef}
We introduce some definitions needed in the below, mostly stemming from mainstream mathematics.
We note that subsets of $\R$ are given by their characteristic functions as in Definition \ref{char}, well-known from measure and probability theory.

\smallskip

Zeroth of all, we make use the usual definition of (open) set, where $B(x, r)$ is the open ball with radius $r>0$ centred at $x\in \R$.
\bdefi[Sets]\label{char}~
\begin{itemize}
\item A subset $A\subset \R$ is given by its characteristic function $F_{A}:\R\di \{0,1\}$, i.e.\ we write $x\in A$ for $ F_{A}(x)=1$, for any $x\in \R$.
\item A subset $O\subset \R$ is \emph{open} in case $x\in O$ implies that there is $k\in \N$ such that $B(x, \frac{1}{2^{k}})\subset O$.
\item A subset $C\subset \R$ is \emph{closed} if the complement $\R\setminus C$ is open. 
\end{itemize}
\edefi
\noindent
As discussed in Remark \ref{dichtbij}, the study of functions of bounded variation already gives rise to open sets that 
do not come with additional representation beyond Definition \ref{char}.

\smallskip

First of all, we shall study the following notions of weak continuity, all of which hark back to the days of Baire, Darboux, and Volterra (\cites{beren,beren2,darb, volaarde2}).
\bdefi\label{flung} For $f:[0,1]\di \R$, we have the following definitions:
\begin{itemize}
\item $f$ is \emph{upper semi-continuous} at $x_{0}\in [0,1]$ if $f(x_{0})\geq_{\R}\lim\sup_{x\di x_{0}} f(x)$,
\item $f$ is \emph{lower semi-continuous} at $x_{0}\in [0,1]$ if $f(x_{0})\leq_{\R}\lim\inf_{x\di x_{0}} f(x)$,
\item $f$ is \emph{quasi-continuous} at $x_{0}\in [0, 1]$ if for $ \epsilon > 0$ and an open neighbourhood $U$ of $x_{0}$, 
there is a non-empty open ${ G\subset U}$ with $(\forall x\in G) (|f(x_{0})-f(x)|<\eps)$.
\item $f:\R\di \R$ \emph{symmetrically continuous} at $x\in \R$ if
\[
(\forall \eps> 0)(\exists \delta>0)(\forall z\in \R )(|z|<\delta\di  |f(x+z)-f(x-z)|<\eps ).
\]

%\item $f$ is \emph{cliquish} at $x_{0}\in [0, 1]$ if for $ \epsilon > 0$ and an open neighbourhood $U$ of $x_{0}$, 
%there is a non-empty open ${ G\subset U}$ with $(\forall y, z\in G) (|f(y)-f(z)|<\eps)$.
%\item $f$ is \emph{regulated} if for every $x_{0}$ in the domain, the `left' and `right' limit $f(x_{0}-)=\lim_{x\di x_{0}-}f(x)$ and $f(x_{0}+)=\lim_{x\di x_{0}+}f(x)$ exist.  
\item $f$ is \emph{Baire 1} if it is the pointwise limit of a sequence of continuous functions. 
\item $f$ is \emph{Baire $2$} if it is the pointwise limit of a sequence of Baire $1$ functions.
\item $f$ is \emph{Baire 1$^{*}$} if\footnote{The notion of Baire 1$^{*}$ goes back to \cite{ellis} and equivalent definitions may be found in \cite{kerkje}.  
In particular,  Baire 1$^{*}$ is equivalent to the Jayne-Rogers notion of \emph{piecewise continuity} from \cite{JR}.} there is a sequence of closed sets $(C_{n})_{n\in \N}$ such $[0,1]=\cup_{n\in \N}C_{n}$ and $f_{\upharpoonright C_{m}}$ is continuous for all $m\in \N$.
\end{itemize}
The first two items are often abbreviated as `usco' and `lsco'.
\edefi
\noindent
Secondly, we also need the notion of `intermediate value property', also called the `Darboux property' in light of Darboux's work in \cite{darb}.
\bdefi[Darboux property] Let $f:[0,1]\di \R$ be given. 
\begin{itemize}
\item A real $y\in \R$ is a left (resp.\ right) \emph{cluster value} of $f$ at $x\in [0,1]$ if there is $(x_{n})_{n\in \N}$ such that $y=\lim_{n\di \infty} f(x_{n})$ and $x=\lim_{n\di \infty}x_{n}$ and $(\forall n\in \N)(x_{n}\leq x)$ (resp.\ $(\forall n\in \N)(x_{n}\geq x)$).  
\item A point $x\in [0,1]$ is a \emph{Darboux point} of $f:[0,1]\di \R$ if for any $\delta>0$ and any left (resp.\ right) cluster value $y$ of $f$ at $x$ and $z\in \R$ strictly between $y$ and $f(x)$, there is $w\in (x-\delta, x)$ (resp.\ $w\in ( x, x+\delta)$) such that $f(w)=y$.   
\end{itemize}
\edefi
By definition, a point of continuity is also a Darboux point, but not vice versa.  

\smallskip
\noindent
Thirdly, we introduce the `usual' definitions of countable set (Def.\ \ref{eni} and \ref{standard}).  
%As in \cite{dagsamVII, dagsamIX, dagsamXI, dagsamX, dagsamXII, dagsamXIII} and e.g.\ measure and probability theory,
%subsets of $\R$ are given via their characteristic\footnote{To be absolutely clear, $X\subset \R$ is represented by $F_{X}:\R\di \{0,1\}$ where $x\in X\asa F_{X}(x)=1$.} functions,  
%The conclusion of Remark \ref{diunk} of course holds for countable sets as well: studying the latter is best done as in Definition \ref{hoogzalieleven}, as the latter is closer to $\cup_{n\in \N}A_{n}$ as in \eqref{lagel2}, in contrast to Definition \ref{standard}, which is more `set theoretical' in nature. 
\bdefi[Enumerable sets of reals]\label{eni}
A set $A\subset \R$ is \emph{enumerable} if there exists a sequence $(x_{n})_{n\in \N}$ such that $(\forall x\in \R)(x\in A\di (\exists n\in \N)(x=_{\R}x_{n}))$.  
\edefi
This definition reflects the RM-notion of `countable set' from \cite{simpson2}*{V.4.2}.  
We note that given $\mu^{2}$ from Section \ref{lll}, we may replace the final implication in Definition~\ref{eni} by an equivalence. 
\bdefi[Countable subset of $\R$]\label{standard}~
A set $A\subset \R$ is \emph{countable} if there exists $Y:\R\di \N$ such that $(\forall x, y\in A)(Y(x)=_{0}Y(y)\di x=_{\R}y)$.  
The mapping $Y:\R\di \N$ is called an \emph{injection} from $A$ to $\N$ or \emph{injective on $A$}. 
If $Y:\R\di \N$ is also \emph{surjective}, i.e.\ $(\forall n\in \N)(\exists x\in A)(Y(x)=n)$, we call $A$ \emph{strongly countable}.
\edefi
The first part of Definition \ref{standard} is from Kunen's set theory textbook (\cite{kunen}*{p.~63}) and the second part is taken from Hrbacek-Jech's set theory textbook \cite{hrbacekjech} (where the term `countable' is used instead of `strongly countable').  
For the rest of this paper, `strongly countable' and `countable' shall exclusively refer to Definition~\ref{standard}, \emph{except when explicitly stated otherwise}. 

\smallskip

Finally, the uncountability of $\R$ can be studied in numerous guises in higher-order RM.  
For instance, the following are from \cite{dagsamX, dagsamXI}, where it is also shown that many extremely basic theorems imply these principles, while $\Z_{2}^{\omega}$ cannot prove them. 
\begin{itemize}
\item For a countable set $A\subset [0,1]$, there is $y\in [0,1]\setminus A$. 
\item $\NIN$: there is no injection from $[0,1]$ to $\N$.
\item For a \textbf{strongly} countable set $A\subset [0,1]$, there is $y\in [0,1]\setminus A$. 
\item $\NBI$: there is no \textbf{bijection} from $[0,1]$ to $\N$.
\end{itemize}
%\bdefi[Realisers for the uncountability of $\R$]\label{kefi}~
%\begin{itemize}
%\item A \emph{Cantor functional/realiser} takes as input $A\subset [0,1]$ and $Y:[0,1]\di \N$ such that $Y$ is injective on $A$, and outputs $x\not \in A$.  
%\item A \emph{\textbf{weak} Cantor realiser} takes as input $A\subset [0,1]$ and $Y:[0,1]\di \N$ such that $Y$ is \textbf{bijective} on $A$, and outputs $x\not \in A$.  
%\item A $\NIN$-\emph{realiser} takes as input $Y:[0,1]\di \N$ and outputs $x,y\in [0,1]$ with $x\ne y \wedge Y(x)=Y(y)$.  
%\end{itemize}
%\edefi
The reader will verify that the first two and last two items are (trivially) equivalent.   
Besides these and similar variations in \cite{samcie22}, we have not been able to obtain elegant or natural equivalences involving the uncountability of $\R$ \emph{try as we might}.  
As discussed in Section \ref{othername}, this is because the above items are formulated using the `set theoretic' definition of countability as in Definition \ref{standard}.
In Section \ref{main}, we obtain many equivalences involving the uncountability of $\R$, based on the alternative (but equivalent over $\ZF$) notion of `height countable' introduced in Section \ref{intro}.  

%By contrast, we have collected many 
%computationally equivalent operations in \cite{dagsamXIII}, including the operation of enumerating a countable set, on input the set \textbf{and} an injection or bijection to $\N$.

%Remark \ref{diunk} suggest the following.
%\begin{defi}\label{hoogzalieleven}
%A set $A\subset \R$ is \emph{height countable} if there is a \emph{height} $H:\R\di \N$ for $A$, i.e.\ for all $n\in \N$, $A_{n}:= \{ x\in A: H(x)<n\}$ is finite.  % \(Definition \ref{deadd}\). 
%\end{defi}
%We note that the notion of `height' is mentioned in e.g.\ \cite{demol}*{p.\ 33} and \cite{vadsiger}, from whence we took this notion.   The following remark is crucial for the below.
%\begin{rem}[Inputs and heights]\label{inhe}\rm
%Similar to the functional $\mathcal{E}$ in Remark \ref{diunk}, a functional defined on height countable sets always takes \textbf{two} inputs: the set $A\subset \R$ \textbf{and} the height $H:\R\di \N$.  
%We note that given a sequence of finite sets $(X_{n})_{n\in \N}$ in $\R$, a height can be defined as $H(x):= (\mu n)(x\in X_{n})$ using Feferman's $\mu^{2}$. 
%A set is therefore height countable iff it is the union over $\N$ of finite sets.  In particular, the set $A=\cup_{n\in \N}A_{n}$ as defined in \eqref{lagel2} has a height $H_{A}(x):= (\mu n)(x\in A_{n})$, 
%which is crucial for the results in Section \ref{birf}.  
%\end{rem}

\subsubsection{Some advanced definitions: bounded variation and around}\label{deffer}
We formulate the definitions of bounded variation and regulated functions, and some background. 

\smallskip

Firstly, the notion of \emph{bounded variation} (often abbreviated $BV$ below) was first explicitly\footnote{Lakatos in \cite{laktose}*{p.\ 148} claims that Jordan did not invent or introduce the notion of bounded variation in \cite{jordel}, but rather discovered it in Dirichlet's 1829 paper \cite{didi3}.} introduced by Jordan around 1881 (\cite{jordel}) yielding a generalisation of Dirichlet's convergence theorems for Fourier series.  
Indeed, Dirichlet's convergence results are restricted to functions that are continuous except at a finite number of points, while $BV$-functions can have infinitely many points of discontinuity, as already studied by Jordan, namely in \cite{jordel}*{p.\ 230}.
Nowadays, the \emph{total variation} of a function $f:[a, b]\di \R$ is defined as follows:
\be\label{tomb}\textstyle
V_{a}^{b}(f):=\sup_{a\leq x_{0}< \dots< x_{n}\leq b}\sum_{i=0}^{n} |f(x_{i})-f(x_{i+1})|.
\ee
If this quantity exists and is finite, one says that $f$ has bounded variation on $[a,b]$.
Now, the notion of bounded variation is defined in \cite{nieyo} \emph{without} mentioning the supremum in \eqref{tomb}; this approach can also be found in \cites{kreupel, briva, brima}.  
Hence, we shall distinguish between the two notions in Definition \ref{varvar}.  As it happens, Jordan seems to use item \eqref{donp} of Definition \ref{varvar} in \cite{jordel}*{p.\ 228-229}.
This definition suggests a two-fold variation for any result on functions of bounded variation, namely depending on whether the supremum \eqref{tomb} is given, or only an upper bound on the latter.  
%providing further motivation for the functionals introduced in Definition \ref{JDR}.
\bdefi[Variations on variation]\label{varvar}
\begin{enumerate}  
\renewcommand{\theenumi}{\alph{enumi}}
\item The function $f:[a,b]\di \R$ \emph{has bounded variation} on $[a,b]$ if there is $k_{0}\in \N$ such that $k_{0}\geq \sum_{i=0}^{n} |f(x_{i})-f(x_{i+1})|$ 
for any partition $x_{0}=a <x_{1}< \dots< x_{n-1}<x_{n}=b  $.\label{donp}
\item The function $f:[a,b]\di \R$ \emph{has {a} variation} on $[a,b]$ if the supremum in \eqref{tomb} exists and is finite.\label{donp2}
\end{enumerate}
\edefi
%Secondly, the notion of \emph{bounded variation} was first introduced by Jordan around 1881 (\cite{jordel}) yielding a generalisation of Dirichlet's convergence theorems for Fourier series.  
%Indeed, Dirichlet's convergence results are restricted to functions that are continuous except at a finite number of points, while functions of bounded variation can have (at most) countable many points of discontinuity, as also shown by Jordan, namely in \cite{jordel}*{p.\ 230}.
Secondly, the fundamental theorem about $BV$-functions is formulated as follows. proved already by Jordan in \cite{jordel}.
\begin{thm}[Jordan decomposition theorem, \cite{jordel}*{p.\ 229}]\label{drd}
A $BV$-function $f : [0, 1] \di \R$ is the difference of  two non-decreasing functions $g, h:[0,1]\di \R$.
\end{thm}
Theorem \ref{drd} has been studied via second-order representations in \cites{groeneberg, kreupel, nieyo, verzengend}.
The same holds for constructive analysis by \cites{briva, varijo,brima, baathetniet}, involving different (but related) constructive enrichments.  
Now, $\ACA_{0}$ suffices to derive Theorem \ref{drd} for various kinds of second-order \emph{representations} of $BV$-functions in \cite{kreupel, nieyo}.  % i.e.\ finite iterations of the Turing jump suffice to compute the associated Jordan decomposition.  
By contrast, our results in \cite{dagsamXI} imply that the third-order version of Theorem \ref{drd} is hard to prove in terms of conventional comprehension.  % as the latter is equivalent to $\BWC_{0}$ over a suitable base theory. 
%the results in \cite{dagsamXII} imply that no functional $\SS_{k}^{2}$ can compute the Jordan decomposition $g, h$ from $f$ in Theorem~\ref{drd} in general.  %We explain this discrepancy in Section \ref{2DFOM}.

\smallskip

Thirdly, Jordan proves in \cite{jordel3}*{\S105} that $BV$-functions are exactly those for which the notion of `length of the graph of the function' makes sense.  In particular, $f\in BV$ if and only if the `length of the graph of $f$', defined as follows:
\be\label{puhe}\textstyle
L(f, [0,1]):=\sup_{0=t_{0}<t_{1}<\dots <t_{m}=1} \sum_{i=0}^{m-1} \sqrt{(t_{i}-t_{i+1})^{2}+(f(t_{i})-f(t_{i+1}))^{2}  }
\ee
exists and is finite by \cite{voordedorst}*{Thm.\ 3.28.(c)}.  In case the supremum in \eqref{puhe} exists (and is finite), $f$ is also called \emph{rectifiable}.  
Rectifiable curves predate $BV$-functions: in \cite{scheeffer}*{\S1-2}, it is claimed that \eqref{puhe} is essentially equivalent to Duhamel's 1866 approach from \cite{duhamel}*{Ch.\ VI}.  Around 1833, Dirksen, the PhD supervisor of Jacobi and Heine, already provides a definition of arc length that is (very) similar to \eqref{puhe} (see \cite{dirksen}*{\S2, p.\ 128}), but with some conceptual problems as discussed in \cite{coolitman}*{\S3}.

\smallskip

Fourth, a function is \emph{regulated} (called `regular' in \cite{voordedorst}) if for every $x_{0}$ in the domain, the `left' and `right' limit $f(x_{0}-)=\lim_{x\di x_{0}-}f(x)$ and $f(x_{0}+)=\lim_{x\di x_{0}+}f(x)$ exist.  
Scheeffer studies discontinuous regulated functions in \cite{scheeffer} (without using the term `regulated'), while Bourbaki develops Riemann integration based on regulated functions in \cite{boerbakies}.  
We note that $BV$-functions are regulated, while Weierstrass' `monster' function is a natural example of a regulated function not in $BV$.  
%Scheeffer studies discontinuous regulated functions in \cite{scheeffer} arround 1884 (without using the term `regulated'), while Bourbaki develops Riemann integration based on regulated functions in \cite{boerbakies}.  
%Weierstrass' `monster' function is a natural example of a regulated function not

\smallskip

Finally, an interesting observation about regular functions is as follows.
\begin{rem}[Continuity and regulatedness]\label{atleast}\rm
First of all, as discussed in \cite{kohlenbach2}*{\S3}, the \emph{local} equivalence for functions on Baire space between sequential and `epsilon-delta' continuity cannot be proved in $\ZF$.  
By \cite{dagsamXI}*{Theorem 3.32}, this equivalence for \emph{regulated} functions is provable in $\ZF$ (and actually just $\ACAo$).  

\smallskip

Secondly, $\mu^{2}$ readily computes the left and right limits of regulated $f:[0,1]\di \R$.  In this way, the formula `$f$ is continuous at $x\in [0,1]$' is decidable using $\mu^{2}$, namely equivalent to the formula `$f(x+)=f(x)=f(x-)$'.  
The usual `epsilon-delta' definition of continuity involves quantifiers over $\R$, i.e.\ the previous equality is much simpler and more elementary. 
\end{rem}
By the previous remark, the basic notions needed for the study of regulated and $BV$-functions make sense in $\ACAo$.

\section{Countability by any other name}\label{othername}
We show that the `standard' set-theoretic definitions of countability -from Section \ref{cdef} and based on injections and bijections to $\N$-
are not suitable for the RM-study of regulated functions (see Section \ref{argsec}) and $BV$-functions (Section \ref{argsec2}).  
We also formulate an alternative -more suitable for RM- notion of countability (Definitions~\ref{hoogzalieleven} and~\ref{hoogzalieleven2}), which amounts to `unions over $\N$ of finite sets' and which can also be found in the mathematical literature.
This kind of `shift of definition' has historical precedent as follows. 
\begin{rem}\label{rightdef}\rm
First of all, the correct choice of definition for a given mathematical notion is crucial to the development of RM, as can be gleaned from the following quote from \cite{earlybs}*{p.\ 129}.  
\begin{quote}
Under the old definition [of real number from \cite{simpson3}], it would be consistent with $\RCA_{0}$ that there exists a sequence of real numbers $(x_{n})_{n\in \N}$ such that $(x_{n}+\pi)_{n\in \N}$ is not a sequence of real numbers. We thank Ian Richards for pointing out this defect of the old definition. Our new definition [of real number from \cite{earlybs}], given above, is adopted in order to remove this defect. All of the arguments and results of \cite{simpson3}
remain correct under the new definition.
\end{quote}
In short, the early definition of `real number' from \cite{simpson3} was not suitable for the development of RM, highlighting the importance of the `right' choice of definition.  

\smallskip

Secondly, we stress that RM is not unique in this regard: the early definition of `continuous function' in Bishop's constructive analysis (\cite{bish1}) was also deemed problematic and changed to a new definition to be found in \cite{bridge2}; the (substantial) problems with both definitions are discussed in some detail in \cite{waaldijk, vandebrug}, including elementary properties such as the concatenation of continuous functions and the continuity of $\frac{1}{x}$ for $x>0$.
\end{rem}
In short, the development of mathematics in logical systems with `restricted' resources, like RM or constructive mathematics, seems to hinge on the 
`right' choice of definition.   In this section, we argue that the `right' definition of countability for higher-order RM is given by height functions as in Section \ref{intro}.
To be absolutely clear, the background theory for this section is $\ZFC$, i.e.\ a statement like `$A\subset\R$ is countable' means that the latter is provable in the former; most arguments (should) go through in $\Z_{2}^{\Omega}$.  

%
%\smallskip
%
%Finally, the rest of this section, we motivatethe  definition of countability for higher-order RM, namely height functions as in Section~\ref{intro}.
%We recall (once more) that subsets of $\R$ are given by their characteristic functions as in Definition \ref{char}.

\subsection{Regulated functions and countability}\label{argsec}
As suggested in Section \ref{intro}, the set-theoretic definition of countable set is not suitable for the RM-study of regulated functions.  
We first provide some motivation for this claim in Remark~\ref{diunk}.  Inspired by the latter, we can then present our alternative notion in Definition~\ref{hoogzalieleven}, which amounts to `unions over $\N$ of finite sets'.
  % $\mathbb{1}_{X}$.  
% right below.   
%On a historical note, the study of various definitions of finite set (in set theory) was the topic of Mostowski's dissertation, as suggested by Tarski (\cite{moserover}*{p.\ 18-19}). 
%\begin{defi}[Finite]\label{deadd}\rm
%Any $X\subset \R$ is \emph{finite} if there is $N\in \N$ such that for any finite sequence $(x_{0}, \dots, x_{N})$ of distinct reals, there is $i\leq N$ such that $x_{i}\not \in X$.
%\edefi
%Note that the previous definition is not circular as `finite sequences or reals' are elements of Baire space, modulo coding using $\exists^{2}$.
%We now motivate Definition \ref{deadd}.
\begin{rem}[Countable sets by any other name]\label{diunk}\rm
First of all, we have previously investigated the RM of regulated functions in \cites{dagsamXI}.  % in connection to the Jordan decomposition theorem and related results.   
As part of this study, the following sets -definable via $\exists^{2}$- present themselves, where $f:[0,1]\di \R$ is regulated:
\begin{eqnarray}\label{lagel2}\textstyle
%A_{n}:=\{x\in [0,1]: |f(x+)-f(x-)|>\frac{1}{2^{n}}    \}.
&A:= \big\{x\in (0,1):  f(x+)\ne f(x) \vee f(x-)\ne f(x)\big\}, \notag \\
&A_{n}:=\big\{x\in (0,1): |f(x+)- f(x)|>\frac1{2^{n}} \vee |f(x-)- f(x)|>\frac1{2^{n}}\big\}.
\end{eqnarray}
Clearly, $A=\cup_{n\in\N}A_{n}$ collects all points in $(0,1)$ where $f$ is discontinuous; this set is central to many proofs involving regulated functions (see e.g.\ \cite{voordedorst}*{Thm.\ 0.36}).  
Now, that $A_{n}$ is finite follows by a standard\footnote{If $A_{n}$ were infinite, the Bolzano-Weierstrass theorem implies the existence of a limit point $y\in [0,1]$ for $A_{n}$.  One readily shows that $f(y+)$ or $f(y-)$ does not exist, a contradiction as $f$ is assumed to be regulated.\label{fkluk}} compactness argument.  
%Finite as $A_{n}$ may be, we are unable to construct an injection (let alone bijection) from $A_{n}$ to $\{0,1, \dots, k\}$ for some $k\in \N$, even assuming $\SS_{m}^{2}$ from Section \ref{lll}.   
%By contrast, $A_{n}$ is trivially finite in the sense of Definition \ref{deadd}.  
However, while $A$ is then countable, we are unable to construct an injection from $A$ to $\N$ (let alone a bijection), even working in $\Z_{2}^{\omega}$ (see Remark \ref{dichtbij} for details).

\smallskip

In short, one readily finds countable sets `in the wild', namely pertaining to regulated functions, for which the associated injections to $\N$ cannot be constructed in reasonably weak logical systems.  
%Similarly, for a set $A\subset \R$ without limit points, $A\cap [-n,n]$ is finite for any $n\in \N$, yet we are again unable to construct an injection from $A$ to $\N$, even assuming $\SS_{m}^{2}$.

\smallskip

Secondly, in light of \eqref{lagel2}, regulated functions give rise to countable sets given \emph{only} as the union over $\N$ of finite sets (i.e.\ without information about an injection to $\N$).
To see that the `reverse' is also true, consider the following function:
\be\label{mopi}
h(x):=
\begin{cases}
0 & x\not \in \cup_{m\in \N}X_{m} \\
\frac{1}{2^{n+1}} &  x\in X_{n} \textup{ and $n$ is the least such number}
\end{cases},
\ee
where $(X_{n})_{n\in \N}$ is a sequence of finite sets in $[0,1]$.   
One readily shows that $h$ is regulated using $\exists^{2}$.
For general closed sets, \eqref{mopi} is crucial to the study of Baire 1 functions (see \cite{myerson}*{p.\ 238}). 
%with more effort (see the proof of Theorem~\ref{flunk} in particular), one observes that basic properties of $h$ yield basic properties of the height countable set $\cup_{n\in \N}X_{n}$.  
Hence, regulated functions yield countable sets given (only) as unions over $\N$ of finite sets, namely via $A=\cup_{n\in \N}A_{n}$ from \eqref{lagel2}, \emph{and vice versa}, namely via  $h:[0,1]\di \R$ as in \eqref{mopi}.

\smallskip

In summary, we observe that the usual definition of countable set (involving injections/bijections to $\N$) is not suitable for the RM-study of regulated functions.   
%the countable set $A$ in \eqref{lagel2} is central to many proofs involving regulated functions, but it seems we cannot (in general) construct any injection from $A$ to $\N$, which is a crucial input of the functional $\mathcal{E}$.
% as omitting it gives rise to a choice function from the Axiom of Choice.  
Luckily, \eqref{lagel2} and \eqref{mopi} suggest an alternative approach via the fundamental connection between regulated functions on one hand, and countable sets given as 
\begin{center}
\emph{the union over $\N$ of finite sets}
\end{center}  
on the other hand.
In conclusion, the RM-study of regulated functions should be based on the centred notion of countability and \textbf{not} injections/bijections to $\N$.
%In fact, it seems the Axiom of Choice is the only way of obtaining a mapping that on input countable $A\subset [0,1]$, outputs $Y:[0,1]\di \N$ that is injective on $A$.  
%Similarly, consider a set $A\subset \R$ without limit points.  
%While $A\cap [-n, n]$ is finite (Definition \ref{deadd}) for any $n\in \N$,  we are again unable to construct an injection from $A\cap [-n,n]$ to $\{0,1, \dots, k\}$ for some $k\in \N$, even assuming $\SS_{m}^{2}$ from Section \ref{lll}.   
%
%\smallskip
%
%Another case in point is:
%\be\label{poliop}
%(\forall X\subset [0,1])\big[\textup{$X$ is finite $ \asa$ $\mathbb{1}_{X}$ is in $BV$}\big].
%\ee
%which is trivial if we adopt Definition \ref{deadd}, yet highly non-trivial for other definitions of finite set.  
%In conclusion, \emph{if} we wish to study regulated functions in computability theory, \emph{then} (certain) finite sets that `appear in the wild' are best studied via Definition \ref{deadd}, and not the definition from Footnote~\ref{krukk} involving bijections or injections.  
%Moreover, Theorem~\ref{xruc} suggests that $\IND_{0}$ (and $\cocode_{0}$) does not suffice to study finite sets as in Definition \ref{deadd}; as noted in Remark \ref{flaw}, we indeed seem to need $\CUC_{\fin}$.
\end{rem}
%Besides pointing out a problem with the usual definition of countable set (based on injections/bijections to $\N$), Remark~\ref{diunk} also suggests an alternative, more suitable, definition of countable set, namely `union over $\N$ of finite sets'.  
Motivated by Remark \ref{diunk}, we introduce our alternative definition of countability, which is exactly the same as Definition \ref{hoogzalielevenint} in Section \ref{intro}.  % as follows. 
%%The conclusion of Remark \ref{diunk} of course holds for countable sets as well: studying the latter is best done as in Definition \ref{hoogzalieleven}, as the latter is closer to $\cup_{n\in \N}A_{n}$ as in \eqref{lagel2}, in contrast to Definition \ref{standard}, which is more `set theoretical' in nature. 
%\bdefi[Enumerable sets of reals]\label{eni}
%A set $A\subset \R$ is \emph{enumerable} if there exists a sequence $(x_{n})_{n\in \N}$ such that $(\forall x\in \R)(x\in A\di (\exists n\in \N)(x=_{\R}x_{n}))$.  
%\edefi
%This definition reflects the RM-notion of `countable set' from \cite{simpson2}*{V.4.2}.  
%We note that given $\mu^{2}$ from Section \ref{lll}, we may replace the final implication in Definition \ref{eni} by an equivalence. 
%\bdefi[Countable subset of $\R$]\label{standard}~
%A set $A\subset \R$ is \emph{countable} if there exists $Y:\R\di \N$ such that $(\forall x, y\in A)(Y(x)=_{0}Y(y)\di x=_{\R}y)$. 
%If $Y:\R\di \N$ is also \emph{surjective}, i.e.\ $(\forall n\in \N)(\exists x\in A)(Y(x)=n)$, we call $A$ \emph{strongly countable}.
%\edefi
%The first part of Definition \ref{standard} is from Kunen's set theory textbook (\cite{kunen}*{p.~63}) and the second part is taken from Hrbacek-Jech's set theory textbook \cite{hrbacekjech} (where the term `countable' is used instead of `strongly countable').  
%For the rest of this paper, `strongly countable' and `countable' shall exclusively refer to Definition~\ref{standard}, \emph{except when explicitly stated otherwise}. 
%Remark \ref{diunk} suggest the following.
\begin{defi}\label{hoogzalieleven}
A set $A\subset \R$ is \emph{height countable} if there is a \emph{height} function $H:\R\di \N$ for $A$, i.e.\ for all $n\in \N$, $A_{n}:= \{ x\in A: H(x)<n\}$ is finite.  % \(Definition \ref{deadd}\). 
\end{defi}
The previous notion of `height' is mentioned in the context of countability in e.g.\ \cite{demol,vadsiger, royco,komig,hux}.   
Definition~\ref{hoogzalieleven} amounts to `union over $\N$ of finite sets', as is readily shown in $\ACAo$.

\smallskip

Finally, the observations from Remark \ref{diunk} regarding countable sets also apply \emph{mutatis mutandis} to finite sets.  
Indeed, finite as each $A_{n}$ from \eqref{lagel2} may be, we are unable to construct an injection to a finite subset of $\N$, even assuming $\Z_{2}^{\omega}$ (see Remark \ref{dichtbij} for details).  
By contrast, the definition of finite set from Section \ref{intro} is more suitable: 
one readily\footnote{The proof of Theorem \ref{flonk} shows that $A_{n}$ is finite, working in $\ACAo+\QFAC^{0,1}$.} shows that $A_{n}$ from \eqref{lagel2} is finite as in Definition \ref{deadd}, which is exactly the same as Definition \ref{deaddint} in Section \ref{intro}.
\begin{defi}[Finite set]\label{deadd}
Any $X\subset \R$ is \emph{finite} if there is $N\in \N$ such that for any finite sequence $(x_{0}, \dots, x_{N})$ of distinct reals, there is $i\leq N$ such that $x_{i}\not \in X$.
\edefi
The number $N$ from Definition \ref{deadd} is call a \emph{size bound} for the finite set $X\subset \R$.
Analogous to countable sets, the RM-study of regulated functions should be based on Definition~\ref{deadd} and \textbf{not} on the set-theoretic definition based on injections/bijections to finite subsets of $\N$ or similar constructs.

\subsection{Bounded variation functions and countability}\label{argsec2}
We discuss the observations from Section \ref{argsec} for the particular case of functions of bounded variation (which are regulated by Theorem \ref{flima}).  
In particular, while the same observations apply, they have to be refined to yield elegant equivalences.  % (over the base theory).  
\begin{rem}[Countable by another name]\label{diunk2}\rm
First of all, we consider \eqref{lagel2}, but formulated for a $BV$-function $g:[0,1]\di \R$, as follows:
\begin{eqnarray}\label{lagel3}\textstyle
%A_{n}:=\{x\in [0,1]: |f(x+)-f(x-)|>\frac{1}{2^{n}}    \}.
&B:= \big\{x\in (0,1):  g(x+)\ne g(x) \vee g(x-)\ne g(x)\big\}, \notag \\
&B_{n}:=\big\{x\in (0,1): |g(x+)- g(x)|>\frac1{2^{n}} \vee |g(x-)- g(x)|>\frac1{2^{n}}\big\}.
\end{eqnarray}
Similar to $A=\cup_{n\in }A_{n}$ as in \eqref{lagel3}, $B=\cup_{n\in\N}B_{n}$ collects all points in $(0,1)$ where $g$ is discontinuous and this set is central to many proofs involving $BV$-functions (see \cite{voordedorst}).  
%Now, that $A_{n}$ is finite follows by a standard\footnote{If $A_{n}$ were infinite, the Bolzano-Weierstrass theorem implies the existence of a limit point $y\in [0,1]$ for $A_{n}$.  One readily shows that $f(y+)$ or $f(y-)$ does not exist, a contradiction as $f$ is assumed to be regulated.\label{fkluk}} compactness argument.  
%Finite as $A_{n}$ may be, we are unable to construct an injection (let alone bijection) from $A_{n}$ to $\{0,1, \dots, k\}$ for some $k\in \N$, even assuming $\SS_{m}^{2}$ from Section \ref{lll}.   
%By contrast, $A_{n}$ is trivially finite in the sense of Definition \ref{deadd}.  
Similar to $A$ from \eqref{lagel3}, $B$ is countable but we are unable to construct an injection from $B$ to $\N$ (let alone a bijection), even assuming $\Z_{2}^{\omega}$ (see Remark \ref{dichtbij}).
%Similarly, for a set $A\subset \R$ without limit points, $A\cap [-n,n]$ is finite for any $n\in \N$, yet we are again unable to construct an injection from $A$ to $\N$, even assuming $\SS_{m}^{2}$.

\smallskip

Secondly, there is a crucial difference between \eqref{lagel2} and \eqref{lagel3}: we know that the set $B_{n}$ is finite \textbf{and} has at most $2^{n}V_{0}^{1}(f)$ elements; indeed, each element 
of $B_{n}$ contributes at least $1/2^{n}$ to the total variation $V_{0}^{1}(f)$ as in \eqref{tomb}.  By contrast, we have no extra information about the size of $A_{n}$ from \eqref{lagel2}.   
However, this extra information is crucial if we wish to deal with $BV$-functions (only).  
Indeed, the function $h$ from \eqref{mopi} is not in $BV$, e.g.\ in the trivial case where each $X_{n}$ has at least $2^{n+1}$ elements.    
By contrast, consider the following nicer function
\be\label{mopi2}
k(x):=
\begin{cases}
0 & x\not \in \cup_{m\in \N}Y_{m} \\
\frac{1}{2^{n+1}} \frac{1}{g(n)+1}&  x\in Y_{n} \textup{ and $n$ is the least such number}
\end{cases},
\ee
where $g\in \N^{\N}$ is a \emph{width function}\footnote{The function $g\in \N^{\N}$ is a \emph{width function} for the sequence of sets $(Y_{n})_{n\in \N}$ in $\R$ in case $Y_{n}$ has at most $ g(n)$ elements, for all $n\in \N$.} for $(Y_{n})_{n\in \N}$.
One readily verifies that $k:[0,1]\di \R$ is in $BV$ with total variation bounded by $1$.
 Hence, $BV$-functions yield countable sets given (only) as in the following description:
 \begin{center}
 \emph{unions over $\N$ of finite sets with a width function,}
 \end{center}
 namely via $B=\cup_{n\in \N}B_{n}$ from \eqref{lagel3}, \emph{and vice versa}, namely via  $k:[0,1]\di \R$ as in \eqref{mopi2}.  The generalisations of bounded variation from Remark \ref{essenti} have a similar property, as evidenced by the final part of the proof of Theorem \ref{flunk2}.  

%In this light, while `height countable' is the right notion for studying regulated functions 
%To make this point more precise let $X:=\cup_{n\in \N}X_{n}$ be the union of finite sets $X_{n}\subset [0,1]$ and define $h$ as follows:
%\be\label{modiex}
%h(x):=
%\begin{cases}
%0 & x\not \in X \\
%\frac{1}{2^{n+1}} &  x\in X_{n} \textup{ and $n$ is the least such number}
%\end{cases}.
%\ee   
%One readily shows that $h$ is regulated, i.e.\ any 
%
%\smallskip
%
%Thus, the hitherto considered definitions of countable set yield problems 
%for the RM-study of $BV$-functions.   
%the countable set $A$ in \eqref{lagel2} is central to many proofs involving regulated functions, but it seems we cannot (in general) construct any injection from $A$ to $\N$, which is a crucial input of the functional $\mathcal{E}$.
% as omitting it gives rise to a choice function from the Axiom of Choice.  
%Luckily, \eqref{lagel3} and \eqref{mopi2} suggest an alternative approach via the fundamental connection between $BV$-functions and countable sets given (only) as:
%\begin{center}
%\emph{union over $\N$ of finite sets with an upper bound function}.
%\end{center}  
%To be absolutely clear, the computability theory of $BV$-functions must be developed based on the centred notion and \textbf{not} the other hitherto considered notions. 
\end{rem}
%The following definition is motivated by the previous remark.
%The following definition is motivated by the observation that $B_{n}$ from \eqref{lagel3} in Remark~\ref{diunk2} comes with an explicit upper bound (namely $2^{n}V_{0}^{1}(f)$), 
%while for regulated functions, all we can say is that $A_{n}$ in \eqref{lagel2} is finite.  % (Def.\ \ref{deadd}).
Motivated by Remark \ref{diunk2}, we introduce our alternative (equivalent over $\ZF$) definition of countability for the RM-study of $BV$-functions.  % as follows. 
\begin{defi}\label{hoogzalieleven2}
A set $B\subset \R$ is \emph{height-width countable} if there is a height function $H:\R\di \N$ and width function $g:\N\di \N$, i.e.\ for all $n\in \N$, the set $B_{n}:= \{ x\in B: H(x)<n\}$ is finite with size bound $g(n)$.  % \(Definition \ref{deadd}\). 
\end{defi}
Finally, the following technical remark makes the claims in Remarks \ref{diunk}~and~\ref{diunk2} more precise in terms of logical systems.   
\begin{rem}\label{dichtbij}\rm
As discussed above, the sets $A_{n}$ from \eqref{lagel2} and $B_{n}$ from \eqref{lagel3} are finite, while the unions $A=\cup_{n\in \N}A_{n}$ and $B=\cup_{n\in \N}B_{n}$ are countable. 
Hence, working in $\ZF$ (or even $\Z_{2}^{\Omega}$ from Section \ref{lll}), the following objects can be constructed:
\begin{itemize}
\item for $n\in \N$, an injection $Y_{n}$ from $A_{n}$ to some $\{0, 1, \dots, k\}$ with $k\in \N$,
\item for $m\in \N$, an RM-code $C_{m}$ (see \cite{simpson2}*{II.5.6}) for the closed sets $A_{m}$ or $B_{m}$.
\end{itemize}
However, it is shown in \cite{samwollic22post, samcsl23} that neither $Y_{n}$ nor $C_{n}$ are computable (in the sense of Kleene S1-S9; see \cite{longmann}) in terms of any $\SS_{m}^{2}$ and the other data.
%Similar results are obtained for Baire 1 functions. 
As a result, even $\Z_{2}^{\omega}$ cannot prove the general existence of $Y_{n}$ and $C_{n}$ as in the previous items.
By contrast, the system $\ACAo+\QFAC^{0,1}$ (and even fragments) suffice to show that Definitions \ref{hoogzalieleven}, \ref{hoogzalieleven2}, and \ref{deadd} apply to $A,A_{n}$ from \eqref{lagel2} and $B,B_{n}$ from \eqref{lagel3}.
%In this light, Definitions \ref{deadd}, \ref{hoogzalieleven} and \ref{hoogzalieleven2} seem most suitable for the RM-study of $BV$ and regulated functions
%; indeed, as the set $A_{n}$ is readily shown to be finite (Definition \ref{dead}) in $\ACAo+\QFAC^{0,1}$, and similar  $A$ and $B$ are then .  
\end{rem}
In conclusion, we have introduced `new' -but equivalent over $\ZF$ and the weaker $\Z_{2}^{\Omega}$- definitions of finite and countable set with the following properties.
\begin{itemize}
\item Our `new' definitions capture the notion of finite and countable set as it occurs `in the wild', namely in the study of $BV$ or regulated functions.  
This holds over relatively weak systems by Remark \ref{dichtbij}.
\item One finds our `new' definitions, in particular the notion of `height', in the literature (see \cites{vadsiger, royco, demol,komig,hux}).
\item These `new' definitions shall be seen to yield many equivalences in the RM of the uncountability of $\R$ (Sections \ref{mainreg} and \ref{mainbv}).
\end{itemize}
We believe that the previous items justify our adoption of our `new' definitions of finite and countable set.  Moreover, Remark \ref{rightdef} creates some historical precedent based on second-order RM and constructive mathematics. 

\section{Main results: regulated and $BV$-functions}\label{main}
\subsection{Introduction}
In this section, we establish the equivalences sketched in Section \ref{intro} pertaining to the uncountability of $\R$ and properties of regulated functions (Section \ref{mainreg}) and $BV$-functions (Section \ref{mainbv}).
In Section~\ref{basics}, we establish some basic properties of $BV$ and regulated functions in weak systems.  

\smallskip

As noted in Section \ref{intro}, we shall show that the uncountability of $\R$ as in $\NIN_{\alt}$ is \emph{robust}, i.e.\ equivalent to small perturbations of itself (\cite{montahue}*{p.\ 432}).
Striking examples of this claimed robustness may be found in Theorem \ref{duck}, where the perturbations are given by considering either one point of continuity, a dense set of such points, or various uncountability criteria for $C_{f}$, the set of continuity points.    

\smallskip

Finally,  the content of Section \ref{plif} is explained in the following remark.
%NEW
\begin{rem}[When more is less]\label{LESS}\rm
As noted in Section \ref{intro}, $\NIN_{\alt}$ is equivalent to well-known theorems from analysis restricted to regulated functions, with similar results for $BV$-functions. 
These (restricted) theorems thus imply $\NIN$ and are not provable in $\Z_{2}^{\omega}+\QFAC^{0,1}$ as a result (see \cite{dagsamX}).
We show in Section \ref{plif} that adding the extra condition `Baire 1' to these theorems makes them provable from (essentially) arithmetical comprehension.
While regulated and $BV$-functions \emph{are} Baire 1, say over $\ZF$ or $\Z_{2}^{\Omega}$, there is no contradiction here as the statement 
\begin{center}
\emph{a $BV$-function on the unit interval is Baire 1} 
\end{center}
already implies $\NIN$ by \cite{dagsamXIV}*{Theorem 2.34}.
%By Theorem \ref{ducksss}, the restrictions of items \eqref{fil1}-\eqref{fil8} from Section \ref{intro} to \emph{Baire 2} functions, are still equivalent to $\NIN_{\alt}$.
We stress that `Baire 1' is special in this regard: other restrictions, e.g.\ involving {semi-continuity} or {Baire~2}, yield theorems that are still equivalent to $\NIN_{\alt}$, as shown in Section \ref{restri}. 
%By contrast, the extra conditions \emph{upper semi-continuous}, \emph{Baire~2}, or properties of Dini derivatives,  $\NIN_{\alt}$.  
We have no explanation for this phenomenon.   
\end{rem}

\subsection{Preliminary results}\label{basics}
We collect some preliminary results pertaining to regulated and $BV$-functions, and $\NIN_{\alt}$ from Section \ref{intro}.

\smallskip

First of all, to allow for a smooth treatment of finite sets, we shall adopt the following principle that collects the most basic properties of finite sets. 
\vspace{1mm}
\begin{princ}[$\FIN$]~
\begin{itemize}
\item \emph{Finite union theorem}: for a sequence of finite sets $(X_{n})_{n\in \N}$ and any $k\in \N$, $\cup_{n\leq k}X_{n}$ is finite. 
\item For any finite $X\subset \R$, there is a finite sequence of reals $(x_{0}, \dots, x_{k})$ that includes all elements of $X$. 
\item \emph{Finite Axiom of Choice}: for $Y^{2}, k^{0}$ with $(\forall n\leq k)(\exists f\in 2^{\N})(Y(f, n)=0)$, there is a finite sequence $(f_{0}, \dots, f_{k})$ in $2^{\N}$ with $(\forall n\leq k)(Y(f_{n}, n)=0)$.
\end{itemize}
\end{princ}
One can readily derive $\FIN$ from a sufficiently general fragment of the induction axiom; the RM of the latter is well-known (see e.g.\ \cite{simpson2}*{X.4.4}) and the RM of (fragments of) $\FIN$ is therefore 
a matter of future research.  We note that in \cite{dagsamXI}, we could derive (fragments of) $\FIN$ from the principles under study, like the fact that (height) countable sets of reals can be enumerated.  
Hence, we could mostly avoid the use of fragments of $\FIN$ in the base theory in \cite{dagsamXI}, which does not seem possible for this paper. 

\smallskip

Secondly, we need some some basic properties of $BV$ and regulated functions, all of which have been established in \cite{dagsamXI} already. 
%We shall make (seemingly essential) use of the following fragment of the induction axiom, which also follows from $\QFAC^{0,1}$.
%\bdefi[$\IND_{2}$]
%
%\edefi
%Note that we use the `standard' definition of left and right limits, i.e.\ as in \eqref{lopoo}. 
\begin{thm}[$\ACAo$]\label{flima}~
\begin{itemize}
\item Assuming $\FIN$, any $BV$-function $f:[0,1]\di \R$ is regulated.
\item Any monotone function $f:[0,1]\di \R$ has bounded variation.   % (either item in Def.\ \ref{varvar}).
\item For any monotone function $f:[0,1]\di \R$, there is a sequence $(x_{n})_{n\in \N}$ that enumerates all $x\in [0,1]$ such that $f$ is discontinuous at $x$.  
\item For regulated $f:[0,1]\di \R$ and $x\in [0,1]$, $f$ is sequentially continuous at $x$ if and only if $f$ is epsilon-delta continuous at $x$.
\item For finite $X\subset [0,1]$, the function $\mathbb{1}_{X}$ has bounded variation. 
\end{itemize}
%The final item is provable in $\RCAo$.  
\end{thm}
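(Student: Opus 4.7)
The plan is to handle the five items essentially independently, using $\exists^{2}$ (equivalently $\mu^{2}$) to decide numerical comparisons and to compute suprema of bounded sequences of reals, while invoking $\FIN$ only where finite choice is genuinely required. Items (ii) and (v) are immediate, so I would dispatch them first to set notation. For (ii), if $f$ is nondecreasing then for any partition $0=x_0<\dots<x_n=1$ the sum $\sum_{i=0}^{n-1}|f(x_{i+1})-f(x_i)|$ telescopes to $f(1)-f(0)$, giving bound $|f(1)-f(0)|+|f(0)-f(1)|$ uniformly; the nonincreasing case is symmetric. For (v), given a size bound $N$ for $X$ and any partition, each jump of $\mathbb{1}_X$ between consecutive partition points contributes at most $1$, and such jumps occur only on either side of a point of $X$, so $V_0^1(\mathbb{1}_X)\le 2N$.

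For item (i), assume $f:[0,1]\to\R$ is $BV$ with variation bound $k_0$. I would fix $x\in(0,1]$ and prove $f(x-)$ exists; the case $f(x+)$ is symmetric. Using $\exists^{2}$, the predicate
\[
P(n,m) \equiv (\exists y,z\in\Q\cap(x-2^{-n},x))\bigl(|f(y)-f(z)|>2^{-m}\bigr)
\]
is arithmetical. If $f(x-)$ fails to exist then there is a fixed $m$ with $P(n,m)$ for every $n$. Using $\FIN$ (the finite-choice clause) I pick, for each $j\le k_0\cdot 2^m+1$, a pair $(y_j,z_j)$ in a shrinking nest of intervals approaching $x$ witnessing $P(n_j,m)$; interleaving these pairs into an increasing partition of $[0,x]$ produces a partition with variation exceeding $k_0$, a contradiction. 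The only subtlety is that $\FIN$ is invoked in its finite-choice form to select the witnessing rationals simultaneously, and the finite-union clause is used to concatenate the nested pairs into a single partition.

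For item (iii), let $f$ be nondecreasing (the other case is analogous). The key observation, provable in $\ACAo$, is that distinct discontinuities $x\ne x'$ of a nondecreasing function yield disjoint nonempty open intervals $(f(x-),f(x+))$ and $(f(x'-),f(x'+))$ inside $[f(0),f(1)]$. Hence any rational $q\in\Q\cap[f(0),f(1)]$ lies in at most one such interval. Using $\mu^{2}$, I define $x_q:=\sup\{y\in[0,1]:f(y)<q\}$, which is computable arithmetically in $f,q$; then $x_q$ is a discontinuity iff $f(x_q-)<q<f(x_q+)$, which by item (i) and Remark~\ref{atleast} is a decidable condition using $\mu^{2}$. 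Running through an enumeration of $\Q\cap[f(0),f(1)]$ and keeping exactly those $x_q$ for which the test succeeds gives the required sequence; $\mu^{2}$ also allows suppressing duplicates.

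For item (iv), epsilon-delta continuity always entails sequential continuity, so only the converse needs argument. Let $f$ be regulated, so by Remark~\ref{atleast} $\mu^{2}$ computes $f(x+)$ and $f(x-)$ and reduces epsilon-delta continuity at $x$ to the arithmetical identity $f(x-)=f(x)=f(x+)$. If this identity fails, say $f(x-)\ne f(x)$, then since $f(x-)=\lim_{n}f(y_n)$ for every sequence $y_n\uparrow x$ from the left (a standard $\ACAo$ consequence of the definition of left limit), any such sequence witnesses the failure of sequential continuity. The main anticipated obstacle across all five items is the careful management of $\FIN$ in item (i): everything else is an essentially direct application of $\exists^{2}$ together with the arithmetical decidability of one-sided limits of regulated functions.
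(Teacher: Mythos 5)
Your items (ii)--(v) are fine and essentially coincide with the argument the paper relies on (the paper itself only cites \cite{dagsamXI}*{\S3.3}, but the underlying proofs are the telescoping sum, the classical ``disjoint jump intervals indexed by rationals'' argument, the concrete sequence $y_{n}:=x-\frac{1}{2^{n+1}}$ for sequential continuity, and the $2N$ bound for $\mathbb{1}_{X}$). The problem is item (i).

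The gap is in the step ``if $f(x-)$ fails to exist then there is a fixed $m$ with $P(n,m)$ for every $n$'', where you deliberately restricted the witnesses in $P(n,m)$ to \emph{rational} $y,z$ so as to make the predicate arithmetical. That implication is false as a statement about general functions: the failure of the left limit is the failure of the Cauchy criterion over \emph{real} points approaching $x$, and a function can oscillate wildly on irrationals while being constant on $\Q$. Consequently, deriving a contradiction from $(\exists m)(\forall n)P(n,m)$ only shows that $\lim_{q\di x-,\ q\in \Q}f(q)$ exists for $BV$-functions; it does not show that $f$ is regulated, since ``regulated'' quantifies over all real approach points. To close the gap you need a second step: having obtained the rational left limit $L$, suppose some real $y$ arbitrarily close to $x$ from the left has $|f(y)-L|>\eps$; interleave such $y$'s with nearby rationals $q$ satisfying $|f(q)-L|<\eps/2$ to build a partition whose variation exceeds $k_{0}$. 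But this second step requires \emph{choosing finitely many real witnesses}, which is exactly where the finite-choice clause of $\FIN$ (or $\QFAC^{0,1}$ in other formulations) is genuinely needed --- note that in your current setup the appeal to finite choice is vacuous, since $\mu^{2}$ already finds least rational witnesses without any choice. The cleaner route, and the one the companion proof takes, is to skip the rational detour entirely: negate the existence of $f(x-)$ as $(\forall k\in\N)(\exists y,z\in(x-\frac{1}{2^{k}},x))(|f(y)-f(z)|>\eps)$ with \emph{real} $y,z$, fix $k$ up to roughly $k_{0}\cdot 2^{m}$ in advance using the variation bound, apply the finite choice clause of $\FIN$ once to extract that many disjoint witness pairs, and concatenate them into a single partition contradicting the bound $k_{0}$.
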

\begin{proof}
Proofs may be found in \cite{dagsamXI}*{\S3.3}.
\end{proof}
The fourth item of Theorem \ref{flima} is particularly interesting as the local equivalence between sequential and epsilon-delta continuity for \emph{general} $\R\di \R$ functions
is not provable in $\ZF$, while $\RCAo+\QFAC^{0,1}$ suffices, as discussed in Remark \ref{atleast}.

\smallskip

Thirdly, we discuss some `obvious' equivalences for $\NIN$ and $\NBI$.  
\begin{rem}\label{kimu}\rm
Now, $\NIN$ and $\NBI$ are formulated for mappings from $[0,1]$ to $\N$, but we can equivalently replace the unit interval by e.g.\ $\R$, $2^{\N}$, and $\N^{\N}$, as shown in \cite{samcie22}*{\S2.1}.  
An important observation in this context, and readily formalised in $\ACAo$, is that the (rescaled) tangent function provides a bijection from any open interval to $\R$; the inverse of tangent, called \emph{arctangent}, yields a bijection in the other direction (also with rescaling).
Moreover, using these bijections, one readily shows that $\NIN_{\alt}$ is equivalent to the following:
\begin{itemize}
\item there is no height function from $\R$ to $\N$.  
\end{itemize}
Similarly, if we can show that there is no height function from {some} fixed open interval to $\N$, then $\NIN_{\alt}$ follows.  We will tacitly make use of this fact in the proof of Theorems \ref{flonk} and \ref{duck}.
\end{rem}
Fourth, while we choose to use (at least) the system $\ACAo$ as our base theory, one can replace the latter by $\RCAo$ using the following trick.
\begin{rem}[Excluded middle trick]\label{LEM}\rm
The law of excluded middle as in $(\exists^{2})\vee \neg(\exists^{2})$ is quite useful as follows:  suppose we are proving $T\di \NIN_{\alt}$ over $\RCAo$.  
Now, in case $\neg(\exists^{2})$, all functions on $\R$ are continuous by \cite{kohlenbach2}*{Prop.\ 3.12} and $\NIN_{\alt}$ then trivially\footnote{In case $H:\R\di \N$ is continuous on $\R$, the set $A_{n}:= \{ x\in A: H(x)<n\}$ for $A=[0,1]$ in Definition \ref{hoogzalieleven} cannot be finite for any $n\in \N$ for which it is non-empty.}
%\footnote{That $\neg(\exists^{2})\di \cocode_{0}$ also follows from (the proof of) \cite{samcount}*{Theorem 3.2}.  Intuitively, there are no countable sets (given by characteristic functions), except for the empty set, in case all functions on $\R$ are continuous.  } 
holds.  Hence, what remains is to establish $T\di \NIN_{\alt}$ 
\emph{in case we have} $(\exists^{2})$.  However, the latter axiom e.g.\ implies $\ACA_{0}$ and can uniformly convert reals to their binary representations.  
In this way, finding a proof in $\RCAo+(\exists^{2})$ is `much easier' than finding a proof in $\RCAo$.
In a nutshell, we may \emph{without loss of generality} assume $(\exists^{2})$ when proving theorems that are trivial (or readily proved) when all functions (on $\R$ or $\N^{\N }$) are continuous, like $\NIN_{\alt}$.   
Moreover, we can replace $2^{\N}$ by $[0,1]$ at will, which is convenient sometimes.   
\end{rem}
While the previous trick is useful, it should be used sparingly: the axiom $(\exists^{2})$ is required to guarantee that basic 
sets like the unit interval are sets in our sense (Definition \ref{char}) or that finite sets (Definition \ref{deaddint}) are well-behaved.  For this reason, we only mention Remark \ref{LEM} in passing and
shall generally work over $\ACAo$.

\subsection{Regulated functions and the uncountability of $\R$}\label{mainreg}
We establish the equivalences sketched in Section \ref{intro} pertaining to the uncountability of $\R$ and properties of regulated functions. 
%%NEW only the subsubsection division is new
\subsubsection{Volterra's early work}
In this section, we connect the uncountability of $\R$ to Volterra's early results from Section \ref{vintro}.  
In particular, we establish the following theorem where the final two items exhibit some nice robustness properties of $\NIN_{\alt}$ and Volterra's results, as promised in Section \ref{intro}.
\begin{thm}[$\ACAo+\QFAC^{0,1}+\FIN$]\label{flonk}
The following are equivalent.
\begin{enumerate}
\renewcommand{\theenumi}{\alph{enumi}}
\item The uncountability of $\R$ as in $\NIN_{\alt}$.
\item \emph{Volterra's theorem for regulated functions}: there do not exist two regulated functions defined on the unit interval for which the continuity points of one are the discontinuity points of the other, and vice versa.\label{volare1}
\item \emph{Volterra's corollary for regulated functions}: there is no regulated function that is continuous on $\Q\cap[0,1]$ and discontinuous on $[0,1]\setminus\Q$.\label{volare2}
\item Generalised Volterra's corollary \(Theorem \ref{dorki}\) for regulated functions and height countable $D$ \(or: countable $D$, or: strongly countable $D$\). \label{volare3}
\item For a sequence $(X_{n})_{n\in \N}$ of finite sets in $[0,1]$, the set $[0,1]\setminus \cup_{n\in \N}X_{n}$ is dense \(or: not height countable, or: not countable, or: not strongly countable\).\label{lopi}
\end{enumerate}
%We only need $\FIN$ to prove $\NIN_{\alt}$ from the final item; we only need $\QFAC^{0,1}$ to prove the final item from $\NIN_{\alt}$.
\end{thm}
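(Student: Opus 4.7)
The plan is to establish (a) $\Leftrightarrow$ (e) and a cycle (a) $\Rightarrow$ (b) $\Rightarrow$ (c) $\Rightarrow$ (a), plus (a) $\Leftrightarrow$ (d), with the variant formulations in (d) and (e) reduced to the stated cases. The central technical fact, provable in $\ACAo+\QFAC^{0,1}+\FIN$, is that for any regulated $f:[0,1]\to\R$ the discontinuity set $D_f$ equals $\cup_{n\in\N}A_n$ for the $A_n$ of \eqref{lagel2}, each of which is finite in the sense of Definition \ref{deadd} by the Bolzano--Weierstrass argument in footnote \ref{fkluk}. Consequently $D_f$ is always height countable, which drives every forward implication below; write $C_f$ for the complementary set of continuity points.

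For the forward directions from (a), the arguments are counting: given $f$ and $g$ as in (b) with $C_f=D_g$ and $C_g=D_f$, $[0,1]=D_f\cup D_g$ is a union of two height countable sets, hence height countable, contradicting $\NIN_{\alt}$; for (c), a problematic regulated $f$ would make $[0,1]=(\Q\cap[0,1])\cup D_f$ height countable; for (d), if $f$ were continuous on $D$ then $[0,1]=D\cup D_f$ would again be height countable; for (e), failure of density in some subinterval $(a,b)\subset[0,1]$ means $(a,b)\subset\cup_n X_n$, which via the tangent bijection of Remark \ref{kimu} makes $[0,1]$ height countable.

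For the reverse directions, the staircase recipe of \eqref{mopi} is the main tool. Given a putative height function $H:[0,1]\to\N$, the sets $X_n=\{x:H(x)=n\}$ are finite (as subsets of the finite $A_{n+1}$), cover $[0,1]$, and yield a regulated function $h$ as in \eqref{mopi} whose continuity set is empty; this contradicts the case $D=\emptyset$ of (d). For (c) $\Rightarrow$ (a), a dual construction putting value $0$ on rationals and $1/2^{n+1}$ on irrational $x\in X_n$ (least such $n$) yields, using density of $\Q$, a regulated function continuous exactly on $\Q\cap[0,1]$, contradicting (c); and (b) $\Rightarrow$ (c) is handled by pairing any problematic $f$ with Thomae's function \eqref{thomae}. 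The variant formulations of (d) collapse to the height-countable case since any injection $Y:D\to\N$ directly serves as a height function, and those of (e) collapse since each forbids $[0,1]=\cup_n X_n$ for the canonical choice of $X_n$ derived from $H$.

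The main obstacle, and the place where $\FIN$ is essential beyond $\ACAo+\QFAC^{0,1}$, is verifying that the sets $A_n$ of \eqref{lagel2} are finite in the sense of Definition \ref{deadd}, i.e.\ admit an explicit numerical size bound extracted from the compactness argument, and then that the staircase functions of \eqref{mopi} are genuinely regulated. The latter requires showing that left and right limits at every point equal $0$, for which one must see that values above any given threshold $\tfrac{1}{2^N}$ lie in the finite union $\cup_{m<N}X_m$; both steps depend on the finite-union and finite-choice components of $\FIN$.
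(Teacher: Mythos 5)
Your overall route coincides with the paper's: Thomae's function for (b)$\Rightarrow$(c), the \eqref{modi2}-type function (zero on rationals, $1/2^{n+1}$ on irrationals in $X_n$) for (c)$\Rightarrow$(a), the decomposition $[0,1]=D_f\cup(\cdots)$ with $D_f$ a union of the finite sets \eqref{sameold} for all forward directions, and the rescaling of Remark \ref{kimu} for the density clause of (e). The places you flag for $\FIN$ also match, except that the finiteness of the sets $A_n$ is obtained from $\QFAC^{0,1}$ plus Bolzano--Weierstrass rather than from $\FIN$ itself; $\FIN$ enters when enumerating $\cup_{k\leq n}X_k$ to verify that the staircase functions are regulated.

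There is, however, one step that fails: your closure of the equivalence for item (d). You argue that the everywhere-discontinuous function $h$ from \eqref{mopi} ``contradicts the case $D=\emptyset$ of (d)''. But Theorem \ref{dorki} quantifies over countable \emph{dense} sets $D$, so $D=\emptyset$ is not an instance of (d) at all; and the natural repair $D=\Q\cap[0,1]$ does not help, since an everywhere-discontinuous $h$ is in particular discontinuous at some rational, so it \emph{satisfies} the disjunction in (d) rather than refuting it. The function you need here is the asymmetric one from your own (c)$\Rightarrow$(a) step (continuous exactly on $\Q$), not the symmetric staircase \eqref{mopi}. The clean fix, which is what the paper does, is to observe that (d) implies (c) outright by specialising to $D=\Q\cap[0,1]$ (which is height countable, countable, strongly countable, and dense), and then to invoke your already-established implication (c)$\Rightarrow$(a). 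With that substitution the argument is complete.
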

\begin{proof}
First of all, Volterra's theorem implies Volterra's corollary (both restricted to regulated functions), as Thomae's function $T$ from \eqref{thomae} is readily defined using $\exists^{2}$, while the latter also shows that $T$ is regulated and continuous exactly on $\R\setminus \Q$. 

\smallskip

Secondly, we now derive $\NIN_{\alt}$ from Volterra's corollary as in item \eqref{volare2}.  
To this end, let $(X_{n})_{n\in \N}$ be a sequence of finite sets such that $[0,1]=\cup_{n\in \N}X_{n}$.   Now use $\mu^{2}$ to define the following function.  
\be\label{modi2}
g(x):=
\begin{cases}
0 & x\in \Q\\
\frac{1}{2^{n+1}} & x \in \R\setminus \Q \wedge x\in X_{n} \textup{ and $n$ is the least such number}
\end{cases}.
\ee
We have $0=g(0+)=g(0-)=g(x+)=g(x-)$ for any $x\in (0,1)$, i.e.\ $g$ is regulated.  
To establish this fact in our base theory, note that $\cup_{k\leq n}X_{k}$ is finite for any $n\in\N$ and can be enumerated, both thanks to $\FIN$. 
%We seem to need $\FUT+\IND_{3}$ to prove that $g$ from \eqref{modi2} is regulated (as the left and right limits are 0 everywhere). 
As a result, $g$ is continuous at any $x\in \Q\cap [0,1]$ and discontinuous at any $y\in [0,1]\setminus \Q$.  This contradicts Volterra's corollary (for regulated functions), and $\NIN_{\alt}$ follows. 

\smallskip

Thirdly, we derive Volterra's corollary (for regulated functions) from $\NIN_{\alt}$, by contraposition. To this end, let $f$ be regulated, continuous on $[0,1]\cap \Q$, and discontinuous on $[0,1]\setminus \Q$. 
Now consider the following set
\be\label{sameold}\textstyle
%X_{n}:= \{ x\in [0,1] :|f(x+)-f(x-)|>\frac{1}{2^{n}}\},
X_{n}:=\big\{x\in (0,1): |f(x+)- f(x)|>\frac1{2^{n}} \vee |f(x-)- f(x)|>\frac1{2^{n}}\big\}, 
\ee
where we note that e.g.\ the right limit $f(x+)$ for $x\in (0,1)$ equals $\lim_{k\di \infty}f(x+\frac{1}{2^{k}})$; the latter limit is arithmetical and hence $\mu^{2}$ readily obtains it. 
Hence, the set $X_{n}$ from \eqref{sameold} can be defined in $\ACAo$. 
To show that $X_{n}$ is finite, suppose not and apply $\QFAC^{0,1}$ to find a sequence of reals in $X_{n}$.  By the Bolzano-Weierstrass theorem from \cite{simpson2}*{III.2}, this sequence 
has a convergent sub-sequence, say with limit $c\in [0,1]$; then either $f(c-)$ or $f(c+)$ does not exist (using the usual epsilon-delta definition), a contradiction.
Hence, $X_{n}$ is finite and by the assumptions on $f$, we have $D_{f}=\cup_{n\in \N}X_{n}= [0,1]\setminus\Q$.  Then $[0,1]=D_{f}\cup \Q=\big(\cup_{n\in \N}X_{n})\cup \Q$ shows that the unit interval is a union over $\N$ of finite sets, i.e.\ $\neg\NIN_{\alt}$ follows.  One derives item \eqref{volare1} from $\NIN_{\alt}$ in the same way; indeed: $[0,1]=D_{f}\cup D_{g}$ in case item \eqref{volare1} is false for regulated $f, g:[0,1]\di \R$, showing that the unit interval is the union over $\N$ of finite sets, yielding $\neg\NIN_{\alt}$. 

\smallskip

Fourth, we only need to show that $\NIN_{\alt}$ implies item \eqref{volare3}, as $\Q$ is trivially (height) countable and dense.
Hence, let $f$ be regulated, continuous on $[0,1]\cap D$, and discontinuous on $[0,1]\setminus D$, where $D$ is height countable and dense. 
In particular, assume $D=\cup_{n\in \N}D_{n}$ where $D_{n}$ is finite for $n\in \N$.  Now consider $X_{n}$ as in \eqref{sameold} from the above and note that $[0,1]\setminus D=\cup_{n\in \N}X_{n}$.
Hence, $[0,1]=\cup_{n\in \N}Y_{n}$ where $Y_{n}=X_{n}\cup D_{n}$ is finite (as the components are), i.e.\ $\neg\NIN_{\alt}$ follows. 

\smallskip

Finally, we only need to show that $\NIN_{\alt}$ implies the final item \eqref{lopi}.   For the terms in brackets in the latter, this is trivial as (strongly) countable sets are height countable.
For the density claim, let $(X_{n})_{n\in \N}$ be a sequence of finite sets and suppose $x_{0}\in [0,1]$ and $N_{0}\in \N$ are such that $B(x_{0}, \frac{1}{2^{N_{0}}})\cap \big([0,1]\setminus \cup_{n\in \N}X_{n} \big)$ is empty.
Hence,  $B(x_{0}, \frac{1}{2^{N_{0}}})\subset  \cup_{n\in \N}X_{n}$ and define the finite sets $Y_{n}:= X_{n}\cap B(x_{0}, \frac{1}{2^{N_{0}}})$ using $\exists^{2}$.
This implies $B(x_{0}, \frac{1}{2^{N_{0}}})=  \cup_{n\in \N}Y_{n}$, which contradicts $\NIN_{\alt}$, modulo the rescaling discussed in Remark \ref{kimu}.
\end{proof}
The final item of the theorem essentially expresses the Baire category theorem restricted to the complement of finite sets, which are automatically open and dense.

\subsubsection{Continuity and Riemann integration}
We connect the uncountability of $\R$ to properties of regulated functions like continuity and Riemann integration.

\smallskip

First of all, we shall need the set of (dis)continuity points of regulated $f:[0,1]\di \R$, definable via $\exists^{2}$ as follows:
\[
C_{f}:=\{x\in (0,1): f(x)=f(x+)=f(x-)\} \textup{ and } D_{f}=[0,1]\setminus C_{f}.
\] 
These sets occupy a central spot in the study of regulated functions.  
We have the following theorem, where most items exhibit some kind of robustness.
\begin{thm}[$\ACAo+\QFAC^{0,1}+\FIN$]\label{duck}
The following are equivalent.  
\begin{enumerate}
\renewcommand{\theenumi}{\roman{enumi}}
\item The uncountability of $\R$ as in $\NIN_{\alt}$, \label{pon1}
\item For any regulated $f:[0,1]\di \R$, there is $x\in [0,1]$ where $f$ is continuous \(or: quasi-continuous, or: lower semi-continuous\). \label{pon2}
\item Any regulated $f:[0,1]\di \R$ is pointwise discontinuous, i.e.\ the set $C_{f}$ is dense in the unit interval. \label{pon3}
\item For regulated $f:[0,1]\di \R$, the set $C_{f}$ is not height countable \(or: not countable, or: not strongly countable, or: not enumerable\). \label{pon35}
%\item For regulated $f:[0,1]\di \R$, the set $C_{f}$ has positive measure. \label{pon4}
%\item For regulated $f:[0,1]\di \R$, the set $C_{f}$ has measure $1$. \label{pon5}
%\item A height countable set $A\subset [0,1]$ has measure zero. \label{pon6}
%\item A countable set $A\subset [0,1]$ has measure zero. \label{pon7}
\item For regulated $f:[0,1]\di [0,1]$ such that the Riemann integral $\int_{0}^{1}f(x)dx$ exists and is $0$, there is $x\in [0,1]$ with $f(x)=0$. \(Bourbaki, \cite{boereng}*{p.\ 61}\).\label{pon8}
\item For regulated $f:[0,1]\di [0,1]$ such that the Riemann integral $\int_{0}^{1}f(x)dx$ exists and equals $0$, the set $\{x \in [0,1]: f(x)=0\}$ is dense. \(\cite{boereng}*{p.\ 61}\). \label{pon9}
%\item For any regulated $f:[0,1]\di [0,1]$ with Riemann integral $\int_{0}^{1}f(x)dx=0$, the set $\{x \in [0,1]: f(x)=0\}$ has measure $1$. \label{pon10}
%\item For any regulated $f:[0,1]\di [0,1]$ with Riemann integral $\int_{0}^{1}f(x)dx=0$, the set $\{x \in [0,1]: f(x)=0\}$ has positive measure.\label{pon11}
%\item Blumberg's theorem restricted to $BV$-functions on $[0,1]$.\label{pon12}
\item Blumberg's theorem \(\cite{bloemeken}\) restricted to regulated functions on $[0,1]$.\label{pon13}
\item Measure theoretic Blumberg's theorem \(\cite{bruinebloem}\): for regulated $f:[0,1]\di \R$, there is a dense and uncountable \(or: not strongly countable, or: not height countable\) subset $D\subset [0,1]$ such that $f_{\upharpoonright D}$ is pointwise discontinuous. \label{pon14}
%\item Lusin's first theorem\footnote{Lusin's first theorem states that for any measurable function, there is a set of arbitrary large measure such that the restriction of the former to the latter is continuous} for regulated functions.\label{pon15}
\item For regulated $f:[0,1]\di (0, 1]$, there exist $N\in \N, x\in [0,1]$ such that $(\forall y\in B(x, \frac{1}{2^{N}}))(f(y)\geq \frac{1}{2^{N}})$. \label{pon15}
\item For regulated $f:[0,1]\di (0, 1]$, there exist a dense set $D$ such that $f_{\upharpoonright D}$ is locally bounded away from zero\footnote{In symbols: $(\forall x\in D)(\exists N\in \N)(\forall y\in \underline{B(x, \frac{1}{2^{N}})\cap D})(f(y)\geq \frac{1}{2^{N}})$, where we stress the underlined part as it implements the claimed restriction to $D$.} \(on $D$\).\label{pon16}
\item \textsf{\textup{(FTC)}} For regulated $f:[0,1]\di \R$ such that $F(x):=\lambda x.\int_{0}^{x}f(t)dt$ exists, there is $x_{0}\in (0,1)$ where $F(x)$ is differentiable with derivative $f(x_{0})$.\label{ponfar}
\item For any regulated $f:[0,1]\di \R$, there is a Darboux point. \label{pon17}
\item For any regulated $f:[0,1]\di \R$, its Darboux points are dense. \label{pon18}
\item For any regulated $f:[0,1]\di \R$ with only removable discontinuities, there is $x\in [0,1]$ which is \textbf{not} a strict\footnote{A point $x\in [0,1]$ is a strict local maximum of $f:[0,1]\di \R$ in case $(\exists N\in \N)( \forall y \in B(x, \frac{1}{2^{N}}))(y\ne x\di f(y)<f(x))$.} local maximum. \label{pon19}
% $(\forall x\in D)(\exists N\in \N)(\forall y\in B(x, \frac{1}{2^{N}}))(f(y)\geq \frac{1}{2^{N}})$. \label{pon16}
%\item {If $f:[0,1]\di (0,1]$ has bounded variation and if the Riemann integral on $[0,1]$ exists, then $1\geq _{\R}\int_{0}^{1}f(x)dx>_{\R}0$ \(see e.g.\ Dini, \cite{dinipi2}*{p.\ 359}\).  \label{doncse}}
%\item density version of previous item? recursion theory version?
%\item A lower semi-continuous and regulated $f:[0,1]\di \R_{\geq0}$ attains a minimum.????
%\item If $f:[0,1]\di (0, 1]$ us regulated/BV, there is $N\in \N, x\in [0,1]$ such that $(\forall y\in B(x, \frac{1}{2^{N}}))(f(y)\geq \frac{1}{2^{N}})$. 

%\item For any $f:[0,1]\di \R$ in BV, there is $x\in [0,1]$ such that $f'(x)$ exists. \label{pon12}
%\item For $f:[0,1]\di \R$ in BV, the set $\{x \in [0,1]: f'(x) \textup{ exists }\}$ is dense. \label{pon13}
%\item For $f:[0,1]\di \R$ in BV, the set $\{x \in [0,1]: f'(x) \textup{ exists }\}$ has measure $1$. \label{pon14}
%\item For $f:[0,1]\di \R$ in BV, the set $\{x \in [0,1]: f'(x) \textup{ exists }\}$ has positive measure. \label{pon15}
\end{enumerate}
\end{thm}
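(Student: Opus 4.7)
The plan is to establish each equivalence (i) $\Leftrightarrow$ (j) separately, since the items span such disparate properties that a single cyclic chain would be awkward. Both directions proceed along the template already present in the proof of Theorem~\ref{flonk}: the forward direction exploits the finite-jump-set decomposition of a regulated function, while the backward direction is contrapositive and uses a canonical regulated counterexample built from a height decomposition.

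\emph{Forward direction (i) $\Rightarrow$ (j).} The central observation is that for any regulated $f\colon [0,1]\to\R$, the set
\[
\textstyle X_n := \{x\in (0,1): |f(x+)-f(x)|>\tfrac{1}{2^n} \text{ or } |f(x-)-f(x)|>\tfrac{1}{2^n}\}
\]
from \eqref{sameold} is finite (by the Bolzano-Weierstrass argument of footnote~\ref{fkluk}, using $\QFAC^{0,1}$), so the discontinuity set decomposes as $D_f = \bigcup_{n\in\N} X_n$. Hence $\NIN_{\alt}$ immediately yields item (iv), because otherwise $[0,1]=C_f\cup D_f$ would be a union over $\N$ of finite sets (appealing to $\FIN$); item (ii) is then the weakening $C_f\neq\emptyset$, and the density in (iii) follows by applying the same argument inside an arbitrary sub-interval, exactly as in the final paragraph of the proof of Theorem~\ref{flonk}. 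From density of $C_f$ the remaining items are essentially classical: (v) and (vi) follow because a continuity point $x$ with $f(x)>0$ forces $f$ to be positive on a neighbourhood, contradicting $\int_0^1 f=0$; (vii)-(ix) follow by choosing $D:=C_f$, which is dense and has the required uncountability by (iv); (x) and (xi) follow because at $x_0\in C_f$ the regulated function $f$ is locally bounded below (when positive) and $F(x)=\int_0^x f$ is differentiable at $x_0$ with $F'(x_0)=f(x_0)$; (xii)-(xiii) use that every continuity point is automatically a Darboux point; and (xiv) combines density of $C_f$ with the fact that for a regulated function with only removable discontinuities the identity $f(x\pm)=f(x)$ at every point precludes the possibility that every punctured neighbourhood of a continuity point lies strictly below $f(x)$.

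\emph{Backward direction (j) $\Rightarrow$ (i).} We argue by contraposition, assuming $[0,1]=\bigcup_{n\in\N} X_n$ with each $X_n$ finite. The canonical counterexample is the function $h$ from \eqref{mopi}: under $\FIN$ and $\exists^2$ one checks that $h$ is regulated with $h(x\pm)=0$ everywhere and $h(x)=1/2^{n+1}>0$ whenever $x\in X_n$ for the least such $n$. Consequently $h$ has no continuity, quasi-continuity, or lower semi-continuity point whatsoever (since $\liminf_{y\to x} h(y)=0<h(x)$), which refutes (ii), (iii), and (iv); the same $h$ refutes (xii)-(xiv) after a mild cosmetic modification, and it refutes (x) because every neighbourhood contains elements of $X_m$ for arbitrarily large $m$. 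For the Blumberg-type items (vii)-(ix), given any candidate $D\subset [0,1]$ pick $x\in D$ and exploit that $\{h<\epsilon\}$ is cofinite (hence dense) to produce $y_k\in D$ with $y_k\to x$ and $h(y_k)\to 0\neq h(x)$, witnessing the failure of continuity of $h_{\upharpoonright D}$. For the Riemann-integral items (v), (vi), and (xi), one replaces $h$ by the variant $k$ of \eqref{mopi2}, where the extra factor $1/(g(n)+1)$ supplied by a width function enumerating each $X_n$ (provided by $\FIN$) forces the upper Darboux sums to zero while keeping the function pointwise positive.

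\emph{Main obstacles.} The principal subtle step is (a) securing Riemann integrability of the counterexample used for items (v), (vi), and (xi): the naive $h$ from \eqref{mopi} has its discontinuity set equal to all of $[0,1]$ and is therefore not a priori Riemann integrable, forcing the use of the weighted variant \eqref{mopi2} together with the width function produced by $\FIN$. Further attention is required for (b) the robustness clauses (the various `or' alternatives in (iv), (viii), (ix), and the choice between existence and density in (ii)-(iii)); these are unified by observing that enumerable, (strongly) countable, and height countable sets form an increasing sequence of notions, and by invoking the rescaling of Remark~\ref{kimu} to upgrade `there exists a witness' to `the set of witnesses is dense'. Finally, (c) the Darboux and strict-local-maximum items (xii)-(xiv) demand a careful analysis of one-sided cluster values at points of $C_f$ and $D_f$ for regulated functions with removable discontinuities, where the structural identity $f(x+)=f(x-)=f(x)$ is the decisive input.
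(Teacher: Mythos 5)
Your overall template (the finite sets $X_n$ from \eqref{sameold} for the forward directions, the canonical function $h$ from \eqref{mopi} for the reversals) is the same as the paper's, and most individual items are handled correctly. However, your ``obstacle (a)'' is a misdiagnosis that, if followed, would actually break the theorem. The function $h$ from \eqref{mopi} \emph{is} Riemann integrable with $\int_0^1 h(x)\,dx=0$, and this is exactly what the paper uses for items (v), (vi) and (xi): given $\eps>0$, pick $n$ with $2^{-n}<\eps/2$; by $\FIN$ the set $\cup_{k\leq n}X_k$ where $h\geq 2^{-(n+1)}$ can be enumerated by a finite sequence, so every Riemann sum over a sufficiently fine tagged partition is below $\eps$. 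The Lebesgue-criterion intuition (``discontinuous everywhere, hence not integrable'') is exactly the kind of classical fact that fails in the pathological situation $\neg\NIN_{\alt}$. Worse, your proposed substitute --- the weighted function $k$ from \eqref{mopi2} --- requires a width function $g$ for $(X_n)_{n\in\N}$, which is \emph{not} available from $\FIN$ (the second item of $\FIN$ is a bare existence statement whose matrix is not quantifier-free, so $\QFAC^{0,1}$ does not extract a uniform $g$). A width function is precisely what distinguishes $\NIN_{\alt}'$ from $\NIN_{\alt}$, so your route would only show that items (v), (vi), (xi) imply the weaker $\NIN_{\alt}'$, losing the equivalence.

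The second gap is the forward direction of item (xiv). Density of $C_f$ does not produce a point that fails to be a strict local maximum: a continuity point can perfectly well be a strict local maximum (consider $-|x-\tfrac12|$), so the ``structural identity $f(x+)=f(x-)=f(x)$'' you invoke decides nothing. The paper's argument is substantially different: using $\exists^{2}$ one passes to the continuous function $\tilde f(x):=f(x+)$, shows via a maximum functional $M$ that ``$x$ is a strict local maximum'' is decidable and comes with a modulus $N_{f,x}$, and then proves that the sets $A_n$ of strict local maxima with modulus $\leq n$ are \emph{finite} (two such maxima cannot be too close). Thus the set of all strict local maxima is height countable, and $\NIN_{\alt}$ applied to $(A_n)_{n\in\N}$ --- not to $D_f$ --- yields the desired point. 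Without this extra decomposition your proof of (i)$\Rightarrow$(xiv) does not go through.
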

\begin{proof}
%Suppose Volterra's theorem fails for regulated $f, g :[0,1]\di \R$, i.e.\ $C_{f}=D_{g}$ and $D_{f}=C_{g}$.  
%Then we have $[0,1]=D_{f}\cup C_{f}=D_{f}\cup D_{g}$.  However, $D_{f}=\cup_{n\in \N} X_{n} $ for $X_{n}$ as in \eqref{sameold}, which is finite for all $n\in \N$, i.e.\ $\neg\NIN_{\alt}$ follows if we consider the same for $g$.   
%Clearly, Volterra's theorem for regulated functions implies Volterra's corollary for regulated functions as Thomae's function is regulated. .  
%
%\smallskip
%
%Most equivalences are immediate from the above.  We now prove $\neg \QRQ_{\reg}\di \neg \NIN_{\alt}$, which proceeds as follows. 
%Let $f$ be regulated, continuous on $[0,1]\cap \Q$ and discontinuous on $[0,1]\setminus \Q$, i.e.\ we have $\neg \QRQ_{\reg}$. Now consider the following set
%\be\label{sameold}\textstyle
%%X_{n}:= \{ x\in [0,1] :|f(x+)-f(x-)|>\frac{1}{2^{n}}\},
%X_{n}:=\big\{x\in (0,1): |f(x+)- f(x)|>\frac1{2^{k}} \vee |f(x-)- f(x)|>\frac1{2^{n}}\big\} 
%\ee
%which is finite; this is proved by contradiction via $\QFAC^{0,1}$ and the Bolzano-Weierstrass theorem.
%By definition, $\cup_{n\in \N}X_{n}= [0,1]\setminus\Q$ and $\neg \NIN_{\alt}$ follows.  
%
%\smallskip
First of all, we prove item \eqref{pon2} from $\NIN_{\alt}$; we may use Volterra's corollary as in Theorem \ref{flonk}.  Fix regulated $f:[0,1]\di \R$ and consider this case distinction:
\begin{itemize}
\item if there is $q\in \Q\cap [0,1]$ with $f(q+)= f(q-)=f(q)$, item \eqref{pon2} follows.
\item if there is no such rational, then Volterra's corollary guarantees there is $x\in [0,1]\setminus \Q$ such that $f$ is continuous at $x$.
\end{itemize}
In each case, there is a point of continuity for $f$, i.e.\ item \eqref{pon2} follows. 
To prove that item \eqref{pon2} implies $\NIN_{\alt}$, let $X:=\cup_{n\in \N}X_{n}$ be the union of finite sets $X_{n}\subset [0,1]$ and define $h$ as in \eqref{mopi}.
%\be\label{modi3}
%h(x):=
%\begin{cases}
%0 & x\not \in X \\
%\frac{1}{2^{n+1}} &  x\in X_{n} \textup{ and $n$ is the least such number}
%\end{cases}.
%\ee
%Using $\FUT$, it is readily proved that $h$ is regulated, in particular $0=f(0+)=f(1-)=f(x+)=f(x-)$ for any $x\in (0,1)$.  
As for $g$ from \eqref{modi2} in  Theorem \ref{flonk}, $h$ is regulated.  
Item~\eqref{pon2} then provides a point of continuity $y\in [0,1]$ of $h$, which by definition must be such that $y\not \in X$.
The same holds for quasi- and lower semi-continuity.  % (and items \eqref{on6} and \eqref{on7}).

\smallskip

The implication \eqref{pon3}$\di$\eqref{pon2} is immediate (as the empty set is not dense in $[0,1]$).
To prove \eqref{pon1}$\di$\eqref{pon3}, let $f$ be regulated and such that $C_{f}$ is not dense.  To derive $\neg \NIN_{\alt}$, consider $X_{n}$ as in \eqref{sameold}, which is finite for all $n\in \N$ (as is proved using $\QFAC^{0,1}$ and the Bolzano-Weierstrass theorem).
Since $C_{f}$ is not dense in $[0,1]$, there is $y\in [0,1]$ and $N\in \N$ such that $B(y, \frac{1}{2^{N}})\cap C_{f}=\emptyset$.  
By definition, the set $D_{f}=\cup_{n\in \N}X_{n}$ collects all points where $f$ is discontinuous.  Hence, $[0,1]\setminus C_{f}=D_{f}$, yielding $B(y, \frac{1}{2^{N}})\subset D_{f}$.  Now define $Y_{n}=X_{n}\cap B(y, \frac{1}{2^{N}})$, which is finite since $X_{n}$ is finite.  
Hence, $\cup_{n\in \N}Y_{n}=B(y, \frac{1}{2^{N}})$, i.e.\ an interval can be expressed as the union over $\N$ of finite sets, which readily yields $\neg\NIN_{\alt}$ after rescaling as in Remark \ref{kimu}. 

\smallskip

Regarding item \eqref{pon35}, it suffices to derive the latter from $\NIN_{\alt}$, which is immediate as $D_{f}$ is height countable. 
Indeed, if $C_{f}$ is also height countable, then $[0,1]=C_{f}\cup D_{f}$ is height countable, contradicting $\NIN_{\alt}$.  
In case $A\subset [0,1]$ is countable, then any $Y:[0,1]\di \N$ injective on $A$ is also a height function, i.e.\ $A$ is also height countable.
The same holds for strongly countable and enumerable sets.

\smallskip

Regarding item \eqref{pon8} and \eqref{pon9}, the latter immediately follow from item \eqref{pon2} and \eqref{pon3}. Indeed, in case $f(x)>0$ for $x\in C_{f}$, then by continuity there are $k, N\in \N$ such that 
$f(y)>\frac{1}{2^{k}}$ for $y\in B(x, \frac{1}{2^{N+1}})$, implying $\int_{0}^{1}f(x)dx>\frac{1}{2^{k}2^{N}}>0$.   Now assume item \eqref{pon8}, let $(X_{n})_{n\in \N}$ be a sequence of finite sets, and let $h$ be as in \eqref{mopi}.  The latter is Riemann integrable with $\int_{0}^{1}h(x)dx=0$, which one shows via the usual `epsilon-delta' definition and $\FIN$.  Any $y\in [0,1]$ such that $h(y)=0$, also satisfies $y\not \in \cup_{n\in \N} X_{n}$, i.e.\ $ \NIN_{\alt}$ follows.

\smallskip

Next, we clearly have \eqref{pon3}$\di$\eqref{pon13} and \eqref{pon35}$\di \eqref{pon14}$ since $D=C_{f}$ is as required.  To prove \eqref{pon13}$\di\NIN_{\alt}$, let $(X_{n})_{n\in \N}$ be a sequence of finite sets in $[0,1]$ and consider the regulated function $h$ from \eqref{mopi}.  Let $D$ be the dense set provided by item~\eqref{pon13} and consider $y\in D$.  In case $h(y)\ne 0$, say $h(y)>\frac{1}{2^{k_{0}}}$, use $\FIN$ to enumerate $\cup_{n\leq k_{0}+1} X_{n}$.  
Hence, we can find $N\in \N$ such that for $z\in B(x, \frac{1}{2^{N}})$, $h(z)<\frac{1}{2^{k_{0}+1}}$, i.e.\ $h_{\upharpoonright D}$ is not continuous (on $D$).  
This contradiction implies that $h(y)=0$, meaning $y\in [0,1]\setminus\cup_{n\in \N}X_{n}$, i.e.\ $\NIN_{\alt}$ follows.  A similar proof works for item \eqref{pon14} by considering a point from the dense set of continuity points of $f_{\upharpoonright D}$. 

\smallskip

Next, \eqref{pon2} (resp.\ \eqref{pon3}) clearly implies \eqref{pon15} (resp.\ \eqref{pon16}).
To show that \eqref{pon15} and \eqref{pon16} implies $\NIN_{\alt}$, one proceeds as in the previous paragraphs. 
Similarly, \eqref{pon2} (resp.\ \eqref{pon3}) implies \eqref{pon17} (resp.\ \eqref{pon18}), as any continuity point is a Darboux point, by definition.  
To show that \eqref{pon17} implies $\NIN_{\alt}$, one considers the function $h$ as in \eqref{mopi} and notes that a Darboux point of $h$ is not in $\cup_{n\in \N}X_{n}$.  

\smallskip

For item \eqref{ponfar}, the usual epsilon-delta proof establishes that $F(x):=\int_{0}^{x}f(t)dt$ is continuous and that $F'(y)=f(y)$ in case $f$ is continuous at $y$, i.e.\ item \eqref{pon2} implies item \eqref{ponfar}.
As noted above, $h$ as in \eqref{mopi} satisfies $H(x):=\int_{0}^{x}h(x)dt=0$ for any $x\in [0,1]$, i.e.\ any $y\in (0,1)$ such that $H'(y)=0=h(y)$ is such that $y\not\in \cup_{n\in \N}X_{n}$, yielding $\NIN_{\alt}$ as required.

\smallskip

Finally, assume item \eqref{pon19}, let $(X_{n})_{n\in \N}$ be a sequence of finite sets, and note that $h$ as in \eqref{mopi} is regulated with only removable discontinuities.  
Now, the set $X=\cup_{n\in \N}X_{n}$ consists of the local strict maxima of $h$, i.e.\ item \eqref{pon19} yields $\NIN_{\alt}$.  
For the reversal, $\exists^{2}$ computes a functional $M$ such that $M(g, a, b)$ is a maximum of $g\in C([0,1])$ on $[a,b]\subset [0,1]$ (see \cite{kohlenbach2}*{\S3}), i.e.\ $(\forall y\in [a,b])(g(y)\leq g(M(g, a,b)))$.
Using the functional $M$, one readily shows that `$x$ is a strict local maximum of $g$' is decidable\footnote{If $g\in C([0,1])$, then $x\in [0,1]$ is a strict local maximum iff for some $\epsilon\in \Q^{+}$:
\begin{itemize}
\item $g(y) < g(x)$ whenever $|x-y] < \epsilon$ for any $q\in [0,1]\cap\Q $, and: 
\item $\sup_{y\in [a,b]}g(y) < g(x)$ whenever $x \not \in [a,b]$, $a,b\in \Q$ and $[a,b] \subset [x - \epsilon,x + \epsilon]$.
\end{itemize}
Note that $\mu^{2}$ readily yields $N\in \N$ such that $(\forall y\in B(x, \frac{1}{2^{N}}))( g(y)<g(x))$.\label{roofer}
} given $\exists^{2}$, for $g$ continuous on $[0,1]$.  
Now let $f:[0,1]\di \R$ be regulated and with only removable discontinuities.  
Use $\exists^{2}$ to define $\tilde{f}:[0,1]\di \R$ as follows: $\tilde{f}(x):= f(x+)$ for $x\in [0, 1)$ and $\tilde{f}(1)=f(1-)$.
By definition, $\tilde{f}$ is continuous on $[0,1]$, and $\exists^{2}$ computes a (continuous) modulus of continuity, which follows in the same way as for Baire space (see e.g.\ \cite{kohlenbach4}*{\S4}).
%We now show that for such a function, the property `$x$ is a strict local maximum of $f$' is also decidable given $\exists^{2}$.  
In case $f$ is discontinuous at $x\in [0,1]$, the latter point is a strict local maximum of $f$ if and only if $f(x)>f(x+)$ (or $f(x)>f(x-)$ in case $x=1$).  
Note that $\mu^{2}$ (together with a modulus of continuity for $\tilde{f}$) readily yields $N_{f, x}\in \N$ such that $(\forall y\in B(x, \frac{1}{2^{N_{f,x}}}))( f(y)<f(x))$, in case $x$ is a strict local maximum of $f$.
In case $f$ is continuous at $x\in [0,1]$, we can use $\exists^{2}$ to decide whether $x$ is a local strict maximum of $\tilde{f}$.  
By Footnote \ref{roofer}, $\mu^{2}$ again yields $N_{f, x}\in \N$ such that $(\forall y\in B(x, \frac{1}{2^{N}}))( \tilde{f}(y)<\tilde{f}(x)))$, in case $x$ is a strict local maximum of $\tilde{f}$.
Now consider the following set:
\[\textstyle
A_{n}:=\{x\in [0,1]:  \textup{$x$ is a strict local maximum of $\tilde{f}$ or $f$ with $n\geq N_{{f}, x}$}\}.
\]
Then $A_{n}$ has is finite as strict local maxima cannot be `too close'.  
Hence, $\NIN_{\alt}$ yields $y \in [0,1]\setminus \cup_{n\in \N}A_{n}$, which is not a local maximum of $f$ by definition, i.e.\ item \eqref{pon19} follows, and we are done. 
%Next, consider the following set $A_{n}\subset [0,1]$:
%\begin{eqnarray}\textstyle
%\{x\in [0,1]: (\forall q\in B(x, \frac{1}{2^{n}})\cap \Q)(f(q)<f(x)  ) \wedge  \\
%\textstyle
% (\forall k\in \N)(\exists N\in \N )(\forall  r\in B(x, \frac{1}{2^{n}})\cap \Q)(|f(r)-f(x)|<\frac{1}{2^{k}}   \di |x-r|<\frac{1}{2^{N}})  \\
% \textup{ and the realiser $h(k):= (\mu\dots)$ for the previous formula satisfies $(\forall m\in \N)(\exists k\in \N)(h(k)>m)$,}\\
% \textup{in other words, the realiser $h$ is unbounded and increasing} \}. 
%\end{eqnarray}
%
%
%\smallskip
%
%Next, \eqref{pon5} clearly implies \eqref{pon15}.  To prove \eqref{pon15}$\di\NIN_{\alt}$, let $(X_{n})_{n\in \N}$ be a sequence of finite sets in $[0,1]$ and consider the regulated function $h$ from \eqref{modi3}.
%Let $D\subset [0, 1]$ be a set of positive measure as provided by item \eqref{pon13} and consider $y\in D$.  In case $h(y)\ne 0$, say $h(y)>\frac{1}{2^{k_{0}}}$, use $\FUT$ to enumerate $\cup_{n\leq k_{0}+1} X_{n}$.  
%Now we can find $N\in \N$ such that for $z\in B(x, \frac{1}{2^{N}})$, $h(z)<\frac{1}{2^{k_{0}+1}}$, i.e.\  $h_{\upharpoonright D}$ is not continuous (on $D$).  
%This contradiction implies that $h(y)=0$, meaning $y\in [0,1]\setminus\cup_{n\in \N}X_{n}$, i.e.\ $\NIN_{\alt}$ follows.
%
\end{proof}
We may view item \eqref{pon1} as an extremely basic version of the connectedness of $[0,1]$, as defined by Jordan in \cite{jordel3}*{p.\ 24-28}.
%We may view item \eqref{pon6} from the theorem as an extremely basic property of the Lebesgue measure: one readily proves (using induction) that a finite set has measure zero.  
%By the theorem it is however hard to prove (relative to conventional comprehension) that a union of finite sets also has measure zero.  
Similarly, item \eqref{ponfar} is an extremely basic version of the fundamental theorem of calculus (FTC) and item \eqref{pon2} is an extremely basic version of the Lebesgue criterion for Riemann integrability.  
Moreover, it seems {necessary} to formulate items \eqref{pon8}, \eqref{pon9}, and \eqref{ponfar} with the extra condition that the functions at hand be Riemann integrable.  
%In particular, we b the statement \emph{regulated functions on $[0,1]$ are Riemann integrable} implies $\NIN$ by Theorem \ref{supa}.   
As an exercise, the reader should prove that $h:[0,1]\di \R$ as in \eqref{mopi} is \emph{effectively} Riemann integrable, i.e.\ there
is a functional that outputs the `$\delta>0$' on input the `$\eps>0$' as in the usual epsilon-delta definition of Riemann integrability. 

\smallskip

Moreover, the notion of `left and right limit' gives rise to the notion of `left and right derivative'; following item \eqref{ponfar}, the left (resp.\ right) limit of regulated $f:[0,1]\di \R$ equals the left (resp.\ right) derivative of $F$ at every $x_{0}\in (0,1)$ via a completely elementary proof (say in $\ACAo$).  
We could also formulate item \eqref{pon2} with \emph{approximate continuity} (see e.g.\ \cite{broeker}*{II.5}) in the conclusion, but this notion seems to involve a lot of measure theory.  
 
\subsubsection{Restrictions of regulated functions}\label{restri}
In this section, we show that the above equivalences for $\NIN_{\alt}$ remain valid if we impose certain natural restrictions.

\smallskip

First of all, the above results show that we have to be careful with intuitive statements like \emph{regulated functions are `close to continuous'}.
 Indeed, by Theorem~\ref{duck}, $\Z_{2}^{\omega}+\QFAC^{0,1}$ is consistent with the existence of regulated functions that are discontinuous everywhere.  
 Similarly, $\NIN$ follows from the statement \emph{a regulated function is Baire 1} by \cite{dagsamXIV}*{Theorem 2.34}.  Hence, $\Z_{2}^{\omega}+\QFAC^{0,1}$ cannot prove the latter basic fact and the restriction in item \eqref{Dib} in Theorem \ref{ducksss} is therefore non-trivial.  Similar results hold for items items \eqref{Dib3}-\eqref{Dib2} in Theorem \ref{ducksss} following \cite{dagsamXIV}*{\S2.8}. 
 \begin{thm}[$\ACAo+\QFAC^{0,1}+\FIN$]\label{ducksss}
The following are equivalent.  
\begin{enumerate}
\renewcommand{\theenumi}{\roman{enumi}}
\item The uncountability of $\R$ as in $\NIN_{\alt}$, 
\item For any regulated and \textbf{Baire 2} function $f:[0,1]\di \R$, there is $x\in [0,1]$ where $f$ is continuous.\label{Dib}
\item For any regulated and \textbf{Baire 1$^{*}$} function $f:[0,1]\di \R$, there is $x\in [0,1]$ where $f$ is continuous.\label{Dib3}
\item For any regulated and \textbf{usco} \(or: lsco\) function $f:[0,1]\di \R$, there is $x\in [0,1]$ where $f$ is continuous.\label{Dib2}
%\item For any regulated and \textbf{lsco} function $f:[0,1]\di \R$, there is $x\in [0,1]$ where $f$ is continuous.\label{Dib2}
%\item For any regulated and \textbf{and cliquish} function $f:[0,1]\di \R$, there is $x\in [0,1]$ where $f$ is continuous.\label{Dib2}
\end{enumerate}
\end{thm}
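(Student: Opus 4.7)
The forward direction $\NIN_{\alt}\Rightarrow$ \eqref{Dib}, \eqref{Dib3}, \eqref{Dib2} is immediate: each listed clause is a restriction of item~\eqref{pon2} of Theorem~\ref{duck} to a smaller class of regulated functions, and that implication is already proved there. So the real work is in the reversals, for which I would use the single construction $h:[0,1]\to\R$ from~\eqref{mopi} applied to an arbitrary sequence $(X_n)_{n\in\N}$ of finite sets in $[0,1]$. As in Theorem~\ref{flonk}, $\FIN$ together with $\exists^{2}$ ensures that $h$ is regulated and, moreover, that its set of continuity points is exactly $[0,1]\setminus\cup_n X_n$. Hence, in each case, once $h$ (or a trivial variant) is shown to satisfy the relevant restriction, the hypothesis produces a point $y\in [0,1]\setminus \cup_n X_n$, establishing $\NIN_{\alt}$.

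For item~\eqref{Dib}, I would show that $h$ is Baire~2 unconditionally. Put $Y_N := \cup_{k\leq N}X_k$, which is finite by $\FIN$, and let $h_N$ agree with $h$ on $Y_N$ and vanish elsewhere. After enumerating $Y_N$, the explicit tent-function sequence $g_{N,k}(x) := \sum_{y\in Y_N} h(y)\cdot\max(0, 1-k|x-y|)$ is continuous and converges pointwise to $h_N$ as $k\to\infty$, so each $h_N$ is Baire~1. Since $h_N\to h$ pointwise (as $h_N(x)=h(x)$ once $N$ exceeds the least index $n$ with $x\in X_n$, and for all $N$ when $x\notin\cup_n X_n$), $h$ is Baire~2, and \eqref{Dib} yields the desired continuity point outside $\cup_n X_n$.

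For item~\eqref{Dib3}, I would proceed by contradiction: suppose $[0,1]=\cup_n X_n$. Then $(X_n)_{n\in\N}$ is itself a sequence of closed sets (each $X_n$ is finite, hence closed) covering $[0,1]$, and $h$ is trivially continuous on each finite $X_n$. So $h$ is Baire $1^{*}$, and \eqref{Dib3} produces a continuity point of $h$, contradicting the fact that the continuity points of $h$ form the empty set $[0,1]\setminus\cup_n X_n$. Hence $[0,1]\neq\cup_n X_n$ for the given sequence, i.e., $\NIN_{\alt}$.

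For item~\eqref{Dib2}, I would first verify that $h$ is always usco. At $x\in X_n$ with least $n$, $\FIN$ enumerates $Y_n$, so some ball $B(x,2^{-N})$ meets $Y_n$ only at $x$; then $h(y)\leq 2^{-(n+2)}<h(x)$ for $y\neq x$ in the ball. At $x\notin\cup_n X_n$, any $\eps$-witnessed failure of $\limsup_{y\to x}h(y)\leq 0$ would force, via the definition of $h$, a sequence $y_k\to x$ of points distinct from $x$ lying in a fixed finite $Y_M$ (with $M$ determined by $\eps$); pigeonhole on $Y_M$ makes the sequence eventually constant, forcing $x\in Y_M\subseteq \cup_n X_n$, a contradiction. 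So $h$ is usco, and the usco clause of \eqref{Dib2} gives a continuity point outside $\cup_n X_n$. For the lsco clause, apply the same reasoning to $1-h$, which is regulated, lsco, and has the same continuity points as $h$. The chief obstacle is item~\eqref{Dib3}: one should \emph{not} attempt to prove $h$ is Baire $1^{*}$ in general, since $[0,1]\setminus\cup_n X_n$ need not be $F_{\sigma}$; the key insight is that Baire $1^{*}$ comes for free \emph{precisely} under the assumption $[0,1]=\cup_n X_n$ used for the reductio.
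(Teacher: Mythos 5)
Your proposal is correct and follows essentially the same route as the paper: the forward direction is delegated to Theorem~\ref{duck}, and each reversal is obtained by verifying that the function $h$ from \eqref{mopi} is Baire~2, Baire~$1^{*}$ (under the reductio), respectively usco (with $1-h$ for lsco). Your extra care on item~\eqref{Dib3} --- noting that the Baire~$1^{*}$ witness $[0,1]=\cup_{n}C_{n}$ is only available under the reductio assumption --- is a point the paper leaves implicit, but the argument is otherwise the same.
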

\begin{proof}
In light of Theorem \ref{duck}, we only need to show that items \eqref{Dib}-\eqref{Dib2} imply $\NIN_{\alt}$. 
The function $h:[0,1]\di \R$ from \eqref{mopi} is central in the proof of the former theorem.  
For item \eqref{Dib}, $h$ is Baire 2 as follows: define $h_{n}(x)$ as $h(x)$ in case $x\in \cup_{m\leq n}X_{n}$, and $0$ otherwise. 
By definition, $h$ is the pointwise limit of $(h_{n})_{n\in \N}$ and $\FIN$ allows us to enumerate $\cup_{m\leq n_{0}}X_{n}$ for fixed $n_{0}\in \N$.
Hence, for fixed $n_{0}\in \N$, $h_{n_{0}}$ has at most finitely many points of discontinuity.  
In particular, there is an obvious sequence of continuous function with pointwise limit $h_{n_{0}}$, i.e.\ the latter is Baire 1. 
For item \eqref{Dib2}, the function $h$ is also usco as follows: in case $h(x_{0})=0$ for $x_{0}\in [0,1]$, the definition of usco is trivially satisfied. 
In case $h(x_{0})=\frac{1}{2^{n+1}}$, then there is $N\in \N$ such that $(\forall y\in B(x, \frac{1}{2^{N}}))( y\not \in \cup_{m\leq n-1}X_{m}   )$ as in the proof of Theorem \ref{duck}.  
In this case, the definition of usco is also satisfied.  The function $\lambda x. (1-h(x))$ is lsco.  For item \eqref{Dib3}, define the closed sets $C_{n}:= \cup_{m\leq n}X_{m}$ and note that the restriction of $h$ to $C_{n}$ is continuous for each $n$, i.e.\ $h$ is also Baire 1$^{*}$.
\end{proof}
Secondly, we point out one subtlety in the previous proof: it is \emph{only} shown that $h:[0,1]\di \R$ from \eqref{mopi} is Baire 2.  In particular, 
we cannot\footnote{The results in \cite{dagsamXIV}*{\S2.6} establish that $\ACAo+\ATR_{0}$ plus extra induction can prove numerous theorems about $BV$-functions \emph{if} we assume the latter are also Baire 1.  This can be generalised from `$BV$' to `regulated' and from `Baire 1' to `Baire 2 given as an iterated limit of a double sequence of continuous functions'.  The technical details are however rather involved.} construct a double sequence of continuous functions with iterated limit equal to $h$.
As it happens, Baire himself notes in \cite{beren2}*{p.\ 69} that Baire 2 functions can be \emph{represented} by such double sequences. 
We \emph{could} generalise item \eqref{Dib} in Theorem \ref{ducksss} to any higher Baire class and beyond, i.e.\ the latter theorem constitutes robustness in the flesh.  

\smallskip

Moreover, going against our intuitions, we cannot replace `Baire 2' by `Baire 1' in Theorem \ref{ducksss} as the latter condition renders items \eqref{Dib}-\eqref{Dib2} provable in $\ACAo+\QFAC^{0,1}$ by the results in Section \ref{plif}.
In particular, while Baire $1^{*}$ and usco are subclasses of Baire 1, say in $\ZF$ or $\Z_{2}^{\Omega}$, these inclusions do not necessarily hold in weaker systems.  
For instance, it is consistent with $\Z_{2}^{\omega}+\QFAC^{0,1}$ that there are totally discontinuous usco and regulated functions (see Theorem \ref{duck}).
An interesting RM-question would be to calibrate the strength of some of the well-known inclusions, like \emph{a regulated function on the unit interval is Baire 1}.

\smallskip

%NEW Theorem \ref{diender} is new
Thirdly, the \emph{supremum principle} for regulated functions implies $\NIN$ by \cite{dagsamXIV}*{Theorem 2.32} where the former principle states the existence of $F:\Q^{2}\di \R$ such that $F(p, q)=\sup_{x\in [p,q]}f(x)$ for any $p, q\subset [0,1]\cap \Q$.  Indeed, using the well-known interval-halving technique, a supremum functional for $h:[0,1]\di \N$ as in \eqref{mopi} would allow us to enumerate the associated union $\cup_{n\in \N}X_{n}$, i.e.\ $\NIN_{\alt}$ readily follows.  
Perhaps surprisingly, 
the equivalences for $\NIN_{\alt}$ from the previous sections still go through if we restrict to regulated functions with a supremum functional.
Regarding item \eqref{super2}, the Heaviside function is regulated but not symmetrically continuous, where the latter notion goes back to Hausdorff (\cite{p62}).  
\begin{thm}[$\ACAo+\QFAC^{0,1}+\FIN$]\label{diender}
The following are equivalent.  
\begin{enumerate}
\renewcommand{\theenumi}{\roman{enumi}}
\item The uncountability of $\R$ as in $\NIN_{\alt}$.
\item For regulated $f:[0,1]\di \R$ with a supremum functional, there is $x\in [0,1]$ where $f$ is continuous.\label{super1}
\item For regulated and lsco $f:[0,1]\di \R$ with a supremum functional, there is $x\in [0,1]$ where $f$ is continuous.\label{super11}
\item For regulated and symmetrically continuous $f:[0,1]\di \R$, there is $x\in [0,1]$ where $f$ is continuous.\label{super2}
%\item For regulated and Riemann integrable $f:[0,1]\di \R$, there is $x\in [0,1]$ where $f$ is continuous.\label{super2}
%\item For any regulated and \textbf{Baire 1$^{*}$} function $f:[0,1]\di \R$, there is $x\in [0,1]$ where $f$ is continuous.\label{Dib3}
%\item For any regulated and \textbf{usco} \(or: lsco\) function $f:[0,1]\di \R$, there is $x\in [0,1]$ where $f$ is continuous.\label{Dib2}
%\item For any regulated and \textbf{lsco} function $f:[0,1]\di \R$, there is $x\in [0,1]$ where $f$ is continuous.\label{Dib2}
%\item For any regulated and \textbf{and cliquish} function $f:[0,1]\di \R$, there is $x\in [0,1]$ where $f$ is continuous.\label{Dib2}
\end{enumerate}
\end{thm}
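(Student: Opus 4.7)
The forward direction, that (i) implies each of (ii)--(iv), is immediate from Theorem \ref{duck}(ii): since $\NIN_{\alt}$ already yields a continuity point for every regulated function on $[0,1]$, restricting the class of admissible $f$ can only strengthen the hypothesis and make the conclusion easier. The real content lies in the three reversals, each of which I plan to prove by contraposition. Starting from a sequence $(X_n)_{n\in \N}$ of finite sets with $[0,1]=\cup_{n\in \N}X_n$, my aim is to exhibit, inside the subclass dictated by each item, a regulated function with no continuity point. The natural candidate throughout is the function $h$ from \eqref{mopi}, which the proof of Theorem \ref{duck} already shows to be regulated and nowhere continuous under $\neg\NIN_{\alt}$; the task is then to verify that the extra structure demanded comes for free over $\ACAo+\QFAC^{0,1}+\FIN$.

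For (ii), I plan to use $h$ itself and construct its supremum functional on rational subintervals $[p,q]$. The key is that $\sup_{x\in [p,q]} h(x)=\tfrac{1}{2^{n_0+1}}$, where $n_0$ is the least $n\in\N$ for which $X_n$ contains a point of $[p,q]$ outside $\cup_{m<n}X_m$. To locate $n_0$, I would first compute the least index of, say, the midpoint of $[p,q]$ to obtain a ceiling, and then perform a bounded search using $\exists^2$ and $\FIN$ to enumerate each partial union $\cup_{m\leq n}X_m$ and check whether $X_n\cap[p,q]$ contains a point not already covered. This produces the required supremum functional in $\ACAo+\FIN$, and $h$ then witnesses the failure of (ii).

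For (iii), $h$ is not lower semi-continuous, so I plan to pass to $h':=1-h$. The finiteness of $\cup_{m\leq n}X_m$ (via $\FIN$) implies that every $x_0$ of least index $n$ has a punctured neighbourhood on which $h$ takes values at most $\tfrac{1}{2^{n+2}}<h(x_0)$, so $h$ is upper semi-continuous and hence $h'$ is lower semi-continuous. A second observation is that no finite set can cover an interval, so every rational subinterval of $[0,1]$ meets $X_m$ for arbitrarily large $m$; this forces $\inf h=0$ on every subinterval, making the supremum functional for $h'$ the constant function $1$. Since $h'$ and $h$ share the same (empty) set of continuity points, (iii) is contradicted.

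For (iv), I plan to verify that $h$ itself is symmetrically continuous at every $x_0$ of least index $n$. Given $\eps>0$, I would choose $k\in \N$ with $\tfrac{1}{2^{n+k+2}}<\eps/2$ and, using the finiteness of $\cup_{m\leq n+k}X_m$, pick $\delta>0$ so small that $B(x_0,\delta)$ meets $\cup_{m\leq n+k}X_m$ only at $x_0$; then for $0<|z|<\delta$ both $h(x_0\pm z)$ are bounded by $\eps/2$, so $|h(x_0+z)-h(x_0-z)|<\eps$. I expect the main obstacle across all three reversals to be precisely this sort of verification: one must ensure that the weak resources of the base theory genuinely deliver the supremum functional, the semi-continuity, or the symmetric continuity in question, rather than merely knowing such properties to hold in $\ZF$. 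In every case, the verification ultimately reduces to the finiteness of the partial unions $\cup_{m\leq n}X_m$, which is exactly what the axiom $\FIN$ is designed to provide.
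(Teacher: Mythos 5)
Your forward direction and your treatment of items (iii) and (iv) are sound, but the argument you give for the reversal of item (ii) contains a fatal flaw. You propose to equip $h$ from \eqref{mopi} with a supremum functional by a ``bounded search'' that, for each rational interval $[p,q]$ and each $n$, checks whether $X_{n}\cap[p,q]$ contains a point not already in $\cup_{m<n}X_{m}$. This check is a genuine third-order existential statement over the reals of $[p,q]$, which $\exists^{2}$ cannot decide; and while $\FIN$ asserts for each \emph{fixed} finite set the existence of an enumerating finite sequence, it does not supply these enumerations \emph{uniformly in $n$} as a function, which is what a supremum functional requires. Worse, the object you are trying to build provably does not exist under $\neg\NIN_{\alt}$: as the paper observes just before the theorem, a supremum functional for $h$ permits, via interval halving, an enumeration of $\cup_{n\in\N}X_{n}$; if $[0,1]=\cup_{n\in\N}X_{n}$ this enumerates the unit interval, contradicting the sequential form of Cantor's theorem already available in $\RCA_{0}$. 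So no amount of care can make your construction for (ii) go through.

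The gap is nonetheless reparable from within your own proposal. Since item \eqref{super1} trivially implies item \eqref{super11}, refuting the latter refutes the former, and your witness for \eqref{super11} is correct: $h':=1-h$ is regulated, lsco (by the usco argument for $h$ from Theorem \ref{ducksss}), totally discontinuous, and --- because no finite union $\cup_{m\leq n}X_{m}$ can cover an interval --- satisfies $\inf_{[p,q]}h=0$ on every non-degenerate interval, so the \emph{constant} functional $F(p,q):=1$ is a legitimate supremum functional for $h'$. Your verification of the symmetric continuity of $h$ for item \eqref{super2} is likewise correct, as it reduces to $h(x_{0}+)=h(x_{0}-)=0$. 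For comparison, the paper avoids $h$ altogether: from a height function $Y$ for $[0,1]$ it forms the truncated exponential $e(x):=\sum_{n=0}^{Y(x)+1}\frac{x^{n}}{n!}$, which satisfies $e(x)<e^{x}=e(x+)=e(x-)$ everywhere, hence is regulated, lsco, symmetrically continuous, totally discontinuous, and carries the explicit supremum functional $\sup_{x\in[p,q]}e(x)=e^{q}$; a single function thus witnesses all three reversals at once. Your route, once the redundant and broken argument for (ii) is deleted, buys roughly the same result with the stock function $h$, at the cost of having to route (ii) through (iii).
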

\begin{proof}
The first item implies the other items by Theorem \ref{duck}.  
Now assume the second item and let $Y:[0,1]\di \N$ be an injection and define $e(x):= \sum_{n=0}^{Y(x)+1}\frac{x^{n}}{n!} $.  
By definition, we have $e(x)<e^{x}$ for $x\in [0,1]$.  Using the second item of $\FIN$, we have $e(x+)=e(x-)=e^{x}$ for $x\in (0,1)$.  
Indeed, for small enough neighbourhoods $U$ of $x\in (0,1)$, $Y$ is arbitrarily large on $U\setminus \{x\}$, while $e^{x}$ is uniformly continuous on $[0,1]$.
Hence, $\lambda x.e(x)$ is regulated (and lsco) and $\sup_{x\in [p,q]}e(x)=e^{q}$ also follows.  
Since the former function is totally discontinuous, we obtain a contradiction. 
To show that $\lambda x.e(x)$ is also symmetrically continuous, note that the second item of $\FIN$ implies that $|e(x+h)-e(x-h)|$ is arbitrarily small for small enough $h\in \R$.  
%Now assume item~\eqref{super1} and note that $e(x):= \sum_{n=0}^{Z(x)+1}\frac{x^{n}}{n!} $ from the proof of the theorem has a supremum functional, and is regulated and totally discontinuous, i.e.\ $\NIN_{\alt}$ follows.   For item~\eqref{super2}, the function $h:[0,1]\di \R$ from \eqref{mopi} is regulated by the proof of Theorem \ref{duck}.  One obtains $\NIN_{\alt}$ as in the latter.   
\end{proof}
%One can prove that the real number  $\inf_{x\in [p, q]} e(x)$ \emph{exists} for any $p, q\in [0,1]$, but the associated `infimum functional' is seemingly not available in the system at hand.  
%A considerable conceptual advantage of having a supremum (and infimum) functional is that we can introduce the equivalent definition of the Riemann integral based on Darboux sums (\cite{darb}).  
%We do not know it the previous theorem and corollary hold for $BV$-functions, the topic of the next section.  
%\smallskip
%
%In conclusion, we have obtained some equivalences for non-trivial restrictions of the principles from the previous sections.  
Finally, there are a number of equivalent definitions of `Baire 1' on the reals (\cites{leebaire,beren,koumer}), including the following ones by \cite{koumer}*{Theorem 2.3} and \cite{kura}*{\S34, VII}.   
\bdefi\label{frag}~
\begin{itemize}
\item Any $f:[0,1]\di \R$ is \emph{fragmented} if for any $\eps>0$ and closed $C\subset [0,1]$, there is non-empty relatively\footnote{For $A\subseteq B\subset \R$, we say that \emph{$A$ is relatively open \(in $B$\)} if for any $a\in A$, there is $N\in \N$ such that $B(x, \frac{1}{2^{N}})\cap B\subset A$.  Note that $B$ is always relatively open in itself.} open $O\subset C$ such that $\textup{\textsf{diam}}(f(O))<\eps$.
\item Any $f:[0,1]\di \R$ is \emph{$B$-measurable of class 1} if for every open $Y\subset \R$, the set $f^{-1}(Y)$ is $\F_{\sigma}$, i.e.\ a union over $\N$ of closed sets. 
\end{itemize}
\edefi
The \emph{diameter} of a set $X$ of reals is defined as usual, namely $\textup{\textsf{diam}}(X):=\sup_{x, y \in X}|x-y|$, where the latter supremum need not exist for Definition \ref{frag}.  
We have the following theorem, similar to Theorem \ref{ducksss}.
\begin{thm}[$\ACAo+\IND_{0}$]\label{clockal} The following are equivalent. 
\begin{enumerate}
 \renewcommand{\theenumi}{\alph{enumi}}
\item The uncountability of $\R$ as in $\NIN_{\alt}$.\label{fra1}
%\item For usco $f:[0,1]\di \R$, there is a point $x\in [0,1]$ where $f$ is continuous.\label{la2}
\item For fragmented and regulated $f:[0,1]\di \R$, there is a point $x\in [0,1]$ where $f$ is continuous. \label{fra2}
\item For $B$-measurable of class 1 and regulated $f:[0,1]\di \R$, there is $x\in [0,1]$ where $f$ is continuous.\label{fra3}
%\item For cliquish $f:[0,1]\di \R$ which has an oscillation function $\osc_{f}:[0,1]\di \R$, there is a point $x\in [0,1]$ where $f$ is lqco. \label{la3}
%\item For cliquish and uqco $f:[0,1]\di \R$ which has an oscillation function $\osc_{f}:[0,1]\di \R$, there is a point $x\in [0,1]$ where $f$ is continuous. \label{la4}
%\item For totally discontinuous $f:[0,1]\di \R$ with oscillation function $\osc_{f}:[0, 1]\di \R$, there is $N\in \N$ and $c, d\in  [0,1]$ with $\osc_{f}(x)\geq \frac{1}{2^{N}}$ for $x\in (c, d)$.\label{la5}
%\item For totally discontinuous $f:[0,1]\di \R$ with oscillation function $\osc_{f}:[0, 1]\di \R$, there is $(c, d)\subset  [0,1]$ where $f$ is not cliquish.\label{la6}
%\item For cliquish $f:[0,1]\di \R$ which is its own oscillation function, there is a point $x\in [0,1]$ where $f$ is continuous. \label{la3}
%\item For Baire 1 $f:[0,1]\di \R$, there is a point $x\in [0,1]$ where $f$ is continuous.
\end{enumerate}
\end{thm}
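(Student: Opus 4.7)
The plan is to follow the template of Theorem \ref{ducksss}, using the function $h:[0,1]\to\R$ from \eqref{mopi} as the universal counterexample witness. The forward directions (a)$\Rightarrow$(b) and (a)$\Rightarrow$(c) are immediate: the restrictions in (b) and (c) only narrow the class of admissible functions, so Theorem \ref{duck}\eqref{pon2} already supplies a continuity point for any such $f$. Hence the real work is the reversals (b)$\Rightarrow$(a) and (c)$\Rightarrow$(a), which I would establish by contraposition: assume a sequence $(X_{n})_{n\in\N}$ of finite sets with $[0,1]=\cup_{n\in\N}X_{n}$, build $h$ as in \eqref{mopi}, recall from the proof of Theorem \ref{duck} that $h$ is regulated and is discontinuous at every point of $\cup_n X_n=[0,1]$, and then derive a contradiction by verifying that $h$ belongs to each of the restricted classes.

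The core of the proof is the verification of fragmentation and $B$-measurability of class $1$ for $h$. For \emph{fragmentation}, fix $\eps>0$ and closed $C\subset[0,1]$. Choose $n_0\in\N$ so that $1/2^{n_0+1}<\eps$, and let $S:=\cup_{m\leq n_0}X_m$, which is a finite (hence closed) set by $\FIN$ applied inside the argument, or directly by the finite union portion deducible from $\INDY$. If $C\setminus S\neq\emptyset$, take $O:=C\setminus S$; this is relatively open in $C$ because $[0,1]\setminus S$ is open, and for every $y\in O$ one has $h(y)\in\{0\}\cup\{1/2^{n+1}:n>n_0\}$, so $\textup{\textsf{diam}}(h(O))\leq 1/2^{n_0+1}<\eps$. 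If instead $C\subset S$, then $C$ is finite, and any singleton $O=\{c\}\subset C$ is relatively open in $C$ with $\textup{\textsf{diam}}(h(O))=0$. This handles both cases.

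For \emph{$B$-measurability of class $1$}, fix an open $Y\subset\R$ and split on whether $0\in Y$. If $0\notin Y$, then $h^{-1}(Y)\subset \cup_n X_n$, and more precisely $h^{-1}(Y)=\cup_{n\in\N}\bigl((X_n\setminus\cup_{m<n}X_m)\cap h^{-1}(Y)\bigr)$; each summand is a subset of the finite set $X_n$, hence finite, hence closed, exhibiting $h^{-1}(Y)$ as $F_\sigma$. If $0\in Y$, pick $k\in\N$ with $(-1/2^k,1/2^k)\subset Y$; then $h^{-1}(Y)\supseteq [0,1]\setminus\cup_{m<k}X_m$, and any additional points lie in the finite set $\cup_{m<k}X_m$. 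Thus $h^{-1}(Y)$ is the complement in $[0,1]$ of a finite set, together with (at most) a finite correction; this is an open set in $[0,1]$, which is standardly written as a countable union of closed intervals, hence $F_\sigma$. In either branch of (b) or (c), applying the hypothesis to $h$ yields a continuity point $y\in[0,1]$ of $h$; by construction $y\notin\cup_n X_n$, contradicting our assumption and therefore yielding $\NIN_{\alt}$.

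The main obstacle is the bookkeeping needed to carry out these constructions in $\ACAo+\INDY$ rather than in $\ZF$. In particular, the $B$-measurability argument requires producing an explicit $F_\sigma$-representation of $h^{-1}(Y)$ uniformly in a code for the open set $Y$, which hinges on deciding (via $\exists^2$) whether $0\in Y$ and enumerating the finite sets $\cup_{m\leq n_0}X_m$; this is exactly where $\INDY$ should take over the role played by $\FIN$ in Theorem \ref{ducksss}. Similarly, the fragmentation argument needs the finiteness of $\cup_{m\leq n_0}X_m$ together with enough induction to perform the case split on $C\setminus S$. Once these bookkeeping steps are in place, the proof structure mirrors Theorem \ref{ducksss} almost verbatim.
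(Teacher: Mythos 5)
Your proposal is correct and takes essentially the same route as the paper: the forward directions follow from Theorem \ref{duck} since (b) and (c) merely restrict the function class, and the reversals reduce to checking that $h$ from \eqref{mopi} is fragmented and $B$-measurable of class $1$, which is exactly the paper's argument (your treatment is in fact slightly more careful, handling the case $C\subset S$ and the disjointification $X_{n}\setminus\cup_{m<n}X_{m}$ explicitly). Two cosmetic remarks: under $\neg\NIN_{\alt}$ one has $[0,1]=\cup_{n}X_{n}$, so $h$ never takes the value $0$ and the case split on $0\in Y$ is vacuous; and an open set united with a finite set need not be open, but it is still $\F_{\sigma}$, so your conclusion is unaffected.
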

\begin{proof}
In light of Theorem \ref{duck}, it suffices to prove that $h$ from \eqref{mopi} is fragmented and $B$-measurable of class $1$. 
For the former notion, for fixed $k\in \N$, $\FIN$ can enumerate the (finitely many) $x\in [0,1]$ such that $h(x)\geq \frac{1}{2^{k}}$.  Any open set $O$ not 
including these points is such that $\textsf{diam}(h(O))<\frac{1}{2^{k}}$, showing that $h$ is fragmented.  

\smallskip

For the $B$-measurability (of first class), in case $(X_{n})_{n\in \N}$ is a sequence of finite sets such that $[0,1]=\cup_{n\in \N}X_{n}$, note that for any $Z\subset \R$, the set $h^{-1}(Z)$ is the union of those $X_{n}$ such that $\frac{1}{2^{n+1}}\in Z$, i.e.\ $\F_{\sigma}$ by definition.  
\end{proof}
We show in Section \ref{plif} that most of the above statements that are equivalent to $\NIN_{\alt}$, become provable in the much weaker system $\ACAo+\QFAC^{0,1}+\FIN$ if we additionally require the functions to be Baire 1 \emph{as in Definition} \ref{flung}.  

\subsection{Bounded variation and the uncountability of $\R$}\label{mainbv}
%NEW: added subsubsections division
\subsubsection{Introduction}\label{schintro}
 In this section, we establish the equivalences sketched in Section \ref{intro} pertaining to the uncountability of $\R$ and properties of $BV$-functions. 
In particular, we study the following weakening of $\NIN_{\alt}$ involving the notion of height-width countability from Definition \ref{hoogzalieleven2}.
\begin{princ}[$\NIN_{\alt}'$]
The unit interval is not height-width countable.  
\end{princ}
Equivalences for $\NIN_{\alt}'$ will involve some (restrictions of) items from Theorems~\ref{flonk} and~\ref{duck}, but also a number of theorems from analysis that hold for $BV$-functions and not for regulated ones. 
For the latter, we need some additional definitions, found in Section \ref{defz}, while the equivalences are in Section \ref{REZ}

\smallskip

Finally, we first establish Theorem \ref{deville}, which is interesting because we are unable to derive $\NIN_{\alt}$ from the items listed therein.  
The exact definitions of $\HBU$ and $\WHBU$ are below, where the former expresses that the uncountable covering $\cup_{x\in [0,1]}B(x, \Psi(x))$ has a finite sub-covering, i.e.\ the \emph{Heine-Borel theorem} or \emph{Cousin lemma} (\cite{cousin1}). 
The principle $\WHBU$ is the combinatorial essence of the Vitali covering theorem, as studied in \cite{dagsamVI}.
\begin{princ}[$\HBU$;\cite{dagsamIII}]
For any $ \Psi:\R\di \R^{+}$, there are $  y_{0}, \dots, y_{k}\in [0,1]$ such that $ \cup_{i\leq k}B(y_{i}, \Psi(y_{i}))$ covers $[0,1]$.
\end{princ}
\begin{princ}[$\WHBU$;\cite{dagsamVI}] For any $\Psi:\R\di \R^{+} $ and $ \eps>_{\R}0$, there are $ y_{0}, \dots, y_{k}\in [0,1]$ such that $\cup_{i\leq k}B(y_{i}, \Psi(y_{i}))$ has measure at least $1-\eps$. 
%$1-\eps <_{\R}\sum_{i\leq k}|J_{y_{i}}^{\Psi}| $, where $J_{y_{i+1}}^{\Psi}:= I_{y_{i+1}}^{\Psi}\setminus (\cup_{j\leq i}I_{y_{i}}^{\Psi}) $.
\end{princ}
We note that $\WHBU$ can be formulated without using the Lebesgue measure, as discussed at length in e.g.\ \cite{dagsamVI} or \cite{simpson2}*{X.1}.
We conjecture that $\NIN_{\alt}$ is not provable in $\ACAo+\HBU$.
%DO WE NEED $\FIN$????
%RESTRICT TO BV $\Psi$???
%where we note that Theorem~\ref{VOL} restricted to $BV$-functions is intermediate between the first and second item. 
\begin{thm}[$\ACAo+\FIN$]\label{deville}
%The higher items imply the lower ones. 
%\begin{itemize}
The following theorems imply $\NIN_{\alt}'$:
\begin{itemize}
\item  Jordan decomposition theorem for $[0,1]$.
\item The principle $\HBU$ restricted to $BV$-functions.  % \(\cite{dagsamIII, samcie22}\).
\item The principle $\WHBU$ restricted to $BV$-functions.
%\item Volterra's Theorem \ref{VOL} for BV-functions on $[0,1]$.
%\item The principle $\QRQ_{\bv}$.
%\item The uncountability of $\R$ as in $\NIN$
\end{itemize}
%The result remains valid if we restrict $\HBU$ and $\WHBU$ to $BV$-functions.  
\end{thm}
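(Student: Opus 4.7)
The plan is to prove each implication by contraposition: assume the unit interval is height-width countable via a height function $H:\R \di \N$ and a width function $g:\N \di \N$, and derive a contradiction from each of the three principles. The common engine is the construction, modelled on \eqref{mopi2}, of a `bad' $BV$-function built from the data $(H,g)$ that each of the three principles is unable to handle.

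For the Jordan decomposition theorem, define $k:[0,1]\di \R$ by $k(x) := \frac{1}{2^{H(x)+1}(g(H(x)+1)+1)}$. Any partition $0=x_{0}<x_{1}<\dots<x_{m}=1$ satisfies $\sum_{i} k(x_{i}) \leq \sum_{n\in \N} \frac{1}{2^{n+1}} \leq 1$, since at most $g(n+1)$ partition points can have $H$-value equal to $n$ and each contributes at most $\frac{1}{2^{n+1}(g(n+1)+1)}$; hence the total variation of $k$ is at most $2$, and $k\in BV$. A standard use of $\FIN$ to enumerate the finite slice $\{z\in [0,1]:H(z)\leq N\}$ shows $\lim_{y\di x} k(y) = 0 \ne k(x)$ for every $x\in [0,1]$, so $k$ is discontinuous everywhere. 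Jordan decomposition then writes $k=a-b$ with $a,b:[0,1]\di \R$ non-decreasing; by Theorem \ref{flima}, $D_{a}\cup D_{b}$ is enumerable, and since $[0,1]=D_{k}\subseteq D_{a}\cup D_{b}$, we obtain an enumeration $(x_{n})_{n\in \N}$ of $[0,1]$. A Cantor-style nested-interval argument, available in $\ACAo$, then produces $y\in [0,1]$ with $y\ne x_{n}$ for all $n$, contradicting the enumeration.

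For the principles $\HBU$ and $\WHBU$ restricted to $BV$, use instead $\Psi:[0,1]\di \R^{+}$ defined by $\Psi(x):=\frac{1}{2^{H(x)+3}(g(H(x)+1)+1)}$. The same level-by-level estimate gives $\Psi\in BV$ with total variation at most $1/2$. For any choice of $y_{0},\dots,y_{k}\in [0,1]$, the total measure $\sum_{i\leq k} 2\Psi(y_{i})$ is bounded by $2\sum_{n\in \N}g(n+1)\cdot \frac{1}{2^{n+3}(g(n+1)+1)} \leq 1/2$. Applying $\HBU$ to $\Psi$ would yield a finite sub-cover of $[0,1]$, of measure $1$: contradiction. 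Applying $\WHBU$ with $\eps:=1/4$ would yield a sub-cover of measure at least $3/4$: again a contradiction with the bound $1/2$.

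The main obstacle I expect is the careful bookkeeping inside $\ACAo+\FIN$: one must justify the `sum by height level' estimate without invoking any uncountable sum over $[0,1]$, verify that $k$ and $\Psi$ qualify as bounded variation in the sense of item \eqref{donp} of Definition \ref{varvar}, and, in the $\WHBU$ case, reformulate the measure bound combinatorially in line with the representation used in \cite{dagsamVI}. The Jordan case additionally hinges on the non-trivial fact (Theorem \ref{flima}) that the discontinuities of a monotone function can be enumerated in $\ACAo$, which is precisely what converts our construction into an enumeration of $[0,1]$.
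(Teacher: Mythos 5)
Your proposal is correct and follows essentially the same route as the paper: the same function $k$ from \eqref{mopi2} (written via the height function rather than the sets $Y_{n}$, which is equivalent) combined with the enumerability of the discontinuities of monotone functions from Theorem \ref{flima} and a diagonal argument for the Jordan case, and the same rescaled $\Psi$ with the $\frac{1}{g(n)+1}$ factor forcing any finite union of balls to have measure at most $1/2$ for the $\HBU$ and $\WHBU$ cases. The only differences are cosmetic (contraposition versus the paper's direct phrasing, and slightly different powers of $2$ in the definition of $\Psi$).
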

\begin{proof}
For the first item, let $(Y_{n})_{n\in \N}$ be a sequence of finite sets with width bound $g\in \N^{\N}$.  
The function $k:[0,1]\di \R$ from \eqref{mopi2} has bounded variation, with upper bound 1 by definition, for which we need $\FIN$. 
By \cite{dagsamXII}*{Lemma 7}, $\mu^{2}$ can enumerate the points of discontinuity of a monotone function, i.e.\ the Jordan decomposition
theorem provides a sequence $(x_{n})_{n\in \N}$ that enumerates the points of discontinuity of a $BV$-function.  Using the usual diagonal argument (see e.g.\ \cite{simpson2}*{II.4.9}), we can find a
point not in this sequence, yielding $\NIN_{\alt}'$.

\smallskip

For the remaining items, let $(Y_{n})_{n\in \N}$ again be a sequence of finite sets with width bound $g\in \N^{\N}$.  
Suppose $[0,1]=\cup_{n\in \N}Y_{n}$ and define $\Psi:[0,1]\di \R^{+}$ as follows $\Psi(x):= \frac{1}{2^{n+5}}\frac{1}{g(n)+1}$ where $x\in Y_{n}$ and $n$ is the least such number. 
For $x_{0}, \dots, x_{k}\in [0,1]$, the measure of $\cup_{i\leq k} B(x_{i}, \Psi(x_{i}))$ is at most $1/2$ by construction, contradicting $\HBU$ and $\WHBU$.  
Using $\FIN$, one readily shows that $\Psi$ is in $BV$.  
%To establish $\QRQ_{\bv}$, search $(x_{n})_{n\in \N}$ for rationals using $\mu^{2}$.  
%If there are none, use \cite{simpson2}*{II.4.7} to obtain $y\in [0,1]$ not in the sequence $(x_{n})_{n\in \N}$ joined with $[0,1]\cap\Q$.  By definition, $f$ must be continuous at the irrational point $y$, as required. 
%\smallskip
%
%For the second downward implication, let $Y:[0,1]\di \N$ be an injection.  
%Using $\exists^{2}$, define $f:[0,1]\di \R$ as follows
%\be\label{modi}
%f(x):=
%\begin{cases}
%0 & x\in \Q\\
%\frac{1}{2^{Y(x)+1}} & x \in \R\setminus \Q
%\end{cases}.
%\ee
%One readily shows that $f\in BV$ (as $Y$ is an injection) and that $f$ is continuous at any $x\in \Q$ and discontinuous at any $y\in \R\setminus \Q$.
%The continuity properties of $f$ seem to require $\IND_{0}$.
%PS: (*) has somehow been on my mind for a while. The proof that 2) implies NIN is as follows: 
%
%Suppose Y:[0,1] -> N is an injection and define f:[0,1]-> R as follows:
%
%f(x):= 0 in case x in Q, and 1/2^{Y(x)+1} otherwise. 
%
%Clearly, f has bounded variation, is continuous at every rational (use IND_0), and discontinuous at every irrational.
\end{proof}
The principle $\HBU$ is studied in \cite{basket2, basket} for $\Psi$ represented by e.g. second-order Borel codes.  
This `coded' version is provable in $\ATR_{0}$ extended with some induction.  By contrast and Theorem \ref{deville}, $\HBU$ restricted to $BV$-functions, which are definitely Borel,
implies $\NIN_{\alt}'$, which in turn is not provable in $\Z_{2}^{\omega}$.  Thus, the use of codes fundamentally changes the logical strength of $\HBU$. 
A similar argument can be made for the Jordan decomposition theorem, studied for second-order codes in \cites{nieyo}

\subsubsection{Definitions}\label{defz}
We introduce some extra definitions needed for the RM-study of $BV$-functions as in Section \ref{REZ}.

\smallskip
First of all, we shall study \emph{unordered sums}, which are a device for bestowing meaning upon `uncountable sums' $\sum_{x\in I}f(x)$ for \emph{any} index set $I$ and $f:I\di \R$.  
A central result is that if $\sum_{x\in I}f(x)$ somehow exists, it must be a `normal' series of the form $\sum_{i\in \N}f(y_{i})$, i.e.\ $f(x)=0$ for all but countably many $x\in [0,1]$; Tao mentions this theorem in \cite{taomes}*{p.~xii}. 
%We show that basic versions of this theorem yield $\NIN$, $\cocode_{0}$, and $\NBI$.  As expected, the exact formulation of `$\sum_{x\in I}f(x)$ exists' is responsible for the difference in strength.  

\smallskip

By way of motivation, there is considerable historical and conceptual interest in this topic: Kelley notes in \cite{ooskelly}*{p.\ 64} that E.H.\ Moore's study of unordered sums in \cite{moorelimit2} led to the concept of \emph{net} with his student H.L.\ Smith (\cite{moorsmidje}).
Unordered sums can be found in (self-proclaimed) basic or applied textbooks (\cites{hunterapp,sohrab}) and can be used to develop measure theory (\cite{ooskelly}*{p.\ 79}).  
Moreover, Tukey shows in \cite{tukey1} that topology can be developed using \emph{phalanxes}, which are nets with the same index sets as unordered sums.  

\smallskip

%Secondly, we have previously studied the RM of nets in \cites{samcie19, samnetspilot, samwollic19}, to which we refer for the definition of net in $\RCAo$.
Now, unordered sums are just a special kind of \emph{net} and $a:[0,1]\di \R$ is therefore written $(a_{x})_{x\in [0,1]} $ in this context to suggest the connection to nets.  
The associated notation $\sum_{x\in [0,1]}a_{x}$ is purely symbolic.   
We only need the following notions in the below. 
Let $\fin(\R)$ be the set of all finite sequences of reals without repetitions.  %Recall that the basic definitions pertaining to nets are in Section \ref{netro}. 
\bdefi\label{kaukie} Let $a:[0,1]\di \R$ be any mapping, also denoted $(a_{x})_{x\in [0,1]}$.
\begin{itemize}
\item We say that $\sum_{x\in [0,1]} a_{x}$ is \emph{Cauchy} if there is $\Phi:\R\di \fin(\R)$ such that for $\eps>0$ and all $J\in \fin({\R})$ with $J\cap\Phi(\eps)=\emptyset$, we have $|\sum_{x\in J}a_{x}|<\eps$.
\item We say that $\sum_{x\in [0,1]}a_{x} $ is \emph{bounded} if there is $N_{0}\in \N$ such that for any $J\in \fin(\R)$, $N_{0}>|\sum_{x\in J}a_{x}|$. 
%\item We say that $(a_{x})_{x\in [0,1]} $ is \emph{convergent to the limit $a\in \R$} if there is $\Phi:\R\di \fin(\R)$ such that for $\eps>0, I\in \fin({\R})$, we have $|\sum_{x\in \Phi(\eps)}a_{x}|<\eps$.
\end{itemize}
\edefi
Note that in the first item, $\Phi$ is called a \emph{Cauchy modulus}.  
For simplicity, we focus on \emph{positive unordered sums}, i.e.\ $(a_{x})_{x\in [0,1]}$ such that $a_{x}\geq 0$ for $x\in [0,1]$.

\smallskip 

Secondly, there are many spaces between the regulated and $BV$-functions, as discussed in Remark \ref{essenti}.  
We shall study one particular construct, called \emph{Waterman variation}, defined as follows. 
\bdefi\label{kwar}
A decreasing sequence of positive reals $\Lambda=(\lambda_{n})_{n\in \N}$ is a \emph{Waterman sequence} if $\lim_{n\di \infty}\lambda_{n}=0$ and $\sum_{n=0}^{\infty}\lambda_{n}=\infty$.
\edefi
\bdefi[Waterman variation]\label{Warwar}
The function $f:[a,b]\di \R$ has \emph{bounded Waterman variation with sequence $\Lambda=(\lambda_{n})_{n\in \N}$} on $[a,b]$ 
if there is $k_{0}\in \N$ such that $k_{0}\geq \sum_{i=0}^{n} \lambda_{i} |f(x_{i})-f(x_{i+1})|$ 
for any finite collection of pairwise non-overlapping intervals $(x_{i}, x_{i+1})\subset [a,b]$.\label{donpw}
\edefi
Note that Definition \ref{Warwar} is \emph{equivalent} to the `official' definition of Waterman variation by \cite{voordedorst}*{Prop.\ 2.18}.
In case $f:[0,1]\di \R$ has bounded Waterman variation (with sequence $\Lambda$ as in Definition \ref{kwar}), we write `$f\in \Lambda BV$'.

\smallskip

Thirdly,  we make use of the usual definitions of Fourier coefficients and series.   
\bdefi\label{popolop}
The \emph{Fourier series} $S(f)(x)$ of $f:[-\pi, \pi]\di \R$ at $x\in [-\pi, \pi]$ is: 
\be\label{daria}\textstyle
\frac{a_{0}}{2} +  \sum_{n=1}^{\infty} ( a_{k} \cdot \cos(nx)+b_{k} \cdot \sin(nx) ),
\ee
with \emph{Fourier coefficients} $ a_{n} := \frac{1}{\pi}\int_{-\pi}^{\pi}   f (t)\cos(nt)dt$ and $ b_{n} :=\frac1\pi \int_{-\pi}^{\pi}   f (t) \sin(nt)dt$.
\edefi
\noindent
By the proof of Theorem \ref{nirvana}, the Fourier coefficients and series of $k:[0,1]\di \R$ as in \eqref{mopi} all exist.  
Our study of the Fourier series $S(f)$ as in \eqref{daria} will always assume (at least) that the Fourier coefficients of $f$ exist. 
Functions of bounded variation are of course Riemann integrable and similarly have Fourier coefficients, but only in sufficiently strong systems that seem to dwarf $\NIN_{\alt}'$.

\subsubsection{Basic equivalences}\label{REZ}
We establish a number of equivalences for $\NIN_{\alt}'$ and basic properties of $BV$-functions, some similar to those in Theorems \ref{flonk} and \ref{duck}, some new.  
We note that $\QFAC^{0,1}$ is no longer needed in the base theory. 
\begin{thm}[$\ACAo+\FIN$]\label{flunk2}
The following are equivalent.
\begin{enumerate}
\renewcommand{\theenumi}{\roman{enumi}}
\item The uncountability of $\R$ as in $\NIN_{\alt}'$. \label{bv3}
%\item A functional that on input $f:[0,1]\di \R$ in $BV$, outputs $a, b\in  [0,1]$ such that $\{ x\in [0,1]:f(a)\leq f(x)\leq f(b)\}$ is infinite.\label{bv0}
%\item A functional that on input $f:[0,1]\di \R$ in $BV$, outputs $y\in  [0,1]$ where $f$ is continuous \(or quasi- or lower semi-continuous\).\label{bv1}
%%\item A functional that on input $f:[0,1]\di \R$ in $BV$, outputs $y\in  [0,1]$ where $f$ is continuous \(or quasi- or lower semi-continuous\).\label{bv1}
%\item A functional that on input $f:[0,1]\di \R$ in $BV$, outputs either $q\in \Q\cap [0,1]$ where $f$ is discontinuous, or $x\in [0,1]\setminus \Q$ where $f$ is continuous. \label{bv2}
%\item An intermediate Cantor realiser.\label{bv3}
%\item A functional that on input $f:[0,1]\di [0,1]$ in $BV$ with Riemann integral $\int_{0}^{1}f(x)dx=0$, outputs $x\in [0,1]$ with $f(x)=0$. \label{bv4}
%%\item A functional that on input $f:[0,1]\di (0, 1]$ of bounded variation, outputs $N\in \N, x\in [0,1]$ such that $(\forall y\in B(x, \frac{1}{2^{N}}))(f(y)\geq \frac{1}{2^{N}})$.\label{full5}
%\item A functional that on input $f:[0,1]\di \R$ in $BV$, outputs $x_{0}\in (0,1)$ where $F(x):=\int_{0}^{x}f(t)dt$ is differentiable with derivative $f(x_{0})$.\label{bv5}
\item For a positive unordered sum $\sum_{x\in [0,1]}a_{x}$ with upper bound \(or Cauchy modulus; Def.\ \ref{kaukie}\), there is $y\in [0,1]$ such that $a_{y}=0$.\label{bv7}
\item For a positive unordered sum $\sum_{x\in [0,1]}a_{x}$ with upper bound \(or Cauchy modulus; Def.\ \ref{kaukie}\), the set  $\{ x\in [0,1]: a_{x}=0\}$ is dense \(or: not height countable, or: not countable, or: not strongly countable).\label{bv77}
\item[(iv)-(xix)] Any of items \eqref{volare1} to \eqref{volare3} from Theorem \ref{flonk} or items \eqref{pon2} to \eqref{pon19} from Theorem \ref{duck} restricted to $BV$-functions.\label{bv0}
\setcounter{enumi}{+19}
\item For a Waterman sequence $\Lambda$ and $f:[0,1]\di \R$ in $\Lambda BV$, there is $y\in  [0,1]$ where $f$ is continuous.\label{bv8}
%\item[(xv)-(xxiv)] estricted to $\Lambda BV$-functions.\label{bvk}
\item[(xxi)-(xxxvi)] Any of items \eqref{volare1} to \eqref{volare3} from Theorem \ref{flonk} or items \eqref{pon2} to \eqref{pon19} from Theorem \ref{duck} restricted to $\Lambda BV$ for any fixed Waterman sequence $\Lambda$.\label{bv0s}
%\item[(xxxv)] For Riemann integrable $f:[0,1]\di \R$ in $BV$, there is $x\in \R$ such that $f(x)$ equals its canonical representation $\overline{f}(x):=\lim_{n\di \infty} {2^{n}}\int_{x-\frac{1}{2^{n+1}}}^{x+\frac{1}{2^{n+1}}}f(t)dt $.\label{twang}
%\item A functional that on input $f:[0,1]\di \R$ in $BV$ with derivative $f':[0,1]\di \R$, outputs $y\in (0,1)$ where $f(y)=\int_{0}^{y}f'(t)dt$.\label{bv9}
%\item A functional that on input $f:[0,1]\di \R$ in $BV$ with only removable discontinuities, outputs $x\in [0,1]$ which is \textbf{not} a strict local maximum. \label{bv9}
\end{enumerate}
%The following items imply $\NIN_{\alt}'$.
%\begin{enumerate}
%\setcounter{enumi}{+34}
%\renewcommand{\theenumi}{\roman{enumi}}
%\item For $f:[0,1]\di \R$ in $BV$, there is $x_{0}\in (0,1)$ where the Fourier series of $f$ converges to $f(x_{0})$.  \label{bv6}
%%\item For $f:[0,1]\di \R$ in $BV$, the set where the Fourier series of $f$ converges to the function value is dense.  \label{bv65}
%%\item A height-width countable set $A\subset [0,1]$ has measure\footnote{The definition of `$A\subset [0,1]$ has measure zero set' can be written down without using the Lebesgue measure, just like in second-order RM (see \cite{simpson2}*{X.1}).} zero. \label{bv9}
%\end{enumerate}
\end{thm}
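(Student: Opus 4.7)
My plan is to prove Theorem \ref{flunk2} by tracking how the function $k:[0,1]\to \R$ from \eqref{mopi2} plays the role of the canonical counter-example, mirroring the way $h$ from \eqref{mopi} functions in Theorems \ref{flonk} and \ref{duck}. The key quantitative observation is that the discontinuity set $B_n$ from \eqref{lagel3} of a $BV$-function $f$ is not just finite but has size bound $2^{n}V_{0}^{1}(f)$; indeed each element of $B_{n}$ contributes at least $2^{-n}$ to the total variation. Thus for any $BV$-function, $D_{f}=\bigcup_{n\in \N}B_{n}$ comes equipped with both a height function (namely $n\mapsto B_n$) and a width function (namely $n\mapsto 2^{n}V_{0}^{1}(f)$), so $D_f$ is \emph{height-width countable} in the sense of Definition \ref{hoogzalieleven2}. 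With this observation in hand, the implications $\NIN_{\alt}'\Rightarrow$(iv)--(xix) and $\NIN_{\alt}'\Rightarrow$(xxi)--(xxxvi) follow by rerunning the proofs in Theorems \ref{flonk} and \ref{duck} verbatim, with height-width countability replacing height countability throughout. For the Waterman items, every $BV$-function lies in $\Lambda BV$ since $\sum_i \lambda_i|f(x_i)-f(x_{i+1})|\leq \lambda_0 V_{0}^{1}(f)$, and conversely the proofs lift because the $\Lambda BV$-version of \eqref{lagel3} still yields finite $B_n$ (with a width bound depending on $\Lambda$ and the $\Lambda BV$-bound of $f$).

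For the reversals, the unifying idea is the function $k$ from \eqref{mopi2}. Given a sequence of finite sets $(Y_{n})_{n\in \N}$ with width bound $g\in \N^{\N}$, the principle $\FIN$ lets us enumerate $\bigcup_{m\leq n} Y_m$ for fixed $n$, so that $k$ is regulated with $k(x+)=k(x-)=0$ at each point, and the total variation is dominated by $\sum_{n=0}^{\infty}\tfrac{1}{2^{n+1}}\cdot\tfrac{g(n)+1}{g(n)+1}=1$. Hence $k\in BV\subset \Lambda BV$ and $k$ is regulated. Any $y$ supplied by item \eqref{bv8} as a continuity point satisfies $k(y)=k(y\pm)=0$, hence $y\notin \bigcup_{n}Y_n$, delivering $\NIN_{\alt}'$. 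The same function then drives the reversals of all items (iv)--(xix) and (xxi)--(xxxvi): one simply reuses the templates from the proofs of Theorems \ref{flonk} and \ref{duck}, substituting $k$ for $h$.

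For items \eqref{bv7} and \eqref{bv77}, the plan is to directly transcribe the definition of $k$ into unordered-sum language: given $(Y_n,g)$ as above, set $a_x := \tfrac{1}{2^{n+1}(g(n)+1)}$ when $x\in Y_n$ with $n$ minimal and $a_x:=0$ otherwise. Using $\FIN$, for any $J\in \fin(\R)$ and any $N\in \N$ we split $J=J_{\leq N}\cup J_{>N}$ according to whether the minimal index is $\leq N$; then $|\sum_{x\in J}a_x|\leq \sum_{n\leq N}\tfrac{|J\cap Y_n|}{2^{n+1}(g(n)+1)}+\sum_{n>N}\tfrac{1}{2^{n+1}}\leq 1$, giving the bound, and choosing $\Phi(\eps):=\bigcup_{n\leq N(\eps)}Y_n$ (enumerated by $\FIN$) with $N(\eps)$ large enough delivers a Cauchy modulus. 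A point $y$ with $a_y=0$ is then outside $\bigcup_n Y_n$, proving $\NIN_{\alt}'$; the density/uncountability versions of \eqref{bv77} follow by the rescaling trick of Remark \ref{kimu} combined with the argument that $\{x:a_x=0\}$ being height-width countable would force the ambient interval to be so. Conversely, assuming $\NIN_{\alt}'$, a positive unordered sum $\sum_{x\in [0,1]}a_x$ with bound $M$ yields a height-width enumeration of its support via $S_n:=\{x:a_x\geq 2^{-n}\}$ (finite with size bound $\lceil M\cdot 2^n\rceil$), so the support is height-width countable and $\NIN_{\alt}'$ produces $y$ with $a_y=0$.

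The main obstacle I expect is bookkeeping: verifying in the base theory $\ACAo+\FIN$ (without $\QFAC^{0,1}$, which was available in Theorem \ref{duck}) that each of the sets $B_n$ from \eqref{lagel3} really has a size bound computable from $V_{0}^{1}(f)$, and pushing this through for the Waterman-variation sets, where the precise quantitative bound must be extracted from Definition \ref{Warwar}. A secondary delicate point is making the Cauchy-modulus half of items \eqref{bv7}--\eqref{bv77} work without $\QFAC^{0,1}$: the modulus $\Phi$ must be \emph{constructed} from $g$ and $\FIN$ rather than merely shown to exist, which is where the uniform width bound $g$ is essential and where the difference from the regulated case (Theorem \ref{duck}) really bites.
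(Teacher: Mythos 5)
Your proposal is correct and follows essentially the same route as the paper's proof: the quantitative size bound $|B_n|\leq 2^{n}V_{0}^{1}(f)$ making $D_f$ height-width countable, the function $k$ from \eqref{mopi2} as the universal counterexample replacing $h$, the translation of $k$ into an unordered sum for items (ii)--(iii) with the support sets $\{x:a_x>2^{-n}\}$ bounded via the Cauchy/boundedness data, and the inclusion $BV\subseteq\Lambda BV$ together with explicit width bounds $K_n$ extracted from $\Lambda$ for the Waterman items. The paper likewise notes that the width function $g$ is precisely what obviates $\QFAC^{0,1}$, which is the "delicate point" you correctly identified.
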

\begin{proof}
The equivalences involving the restrictions from regulated to $BV$ functions follow from the proofs of Theorems~\ref{flonk} and \ref{duck}.  Indeed, for $f\in BV$ with variation bounded by $1$, the set $X_{n}$ from \eqref{sameold} has size bounded by $2^{n}$ since each element of $X_{n}$ contributes at least $1/2^{n}$ to the variation.  
Moreover, rather than $h:[0,1]\di \R$ as in \eqref{mopi}, we use $k:[0,1]\di \R$ as in \eqref{mopi2}
%\[
%g(x)=  
%\begin{cases}
%0  & x\not \in \cup_{n\in \N} X_{n} \\
%\frac{1}{2^{n}}\frac{1}{g(n)+1} & x\in X_{n} \textup{ and $n$ is the least such natural} 
%\end{cases},
%\]
which has variation bounded by $1$ if $(Y_{n})_{n\in \N}$ is a sequence of sets with width function $g\in \N^{\N}$.  The properties of $k:[0,1]\di \R$ are readily proved using $\FIN$; in particular, the width function $g$ obviates the use of $\QFAC^{0,1}$ as in the proofs of Theorem \ref{flonk} and \ref{duck}.  
Hence, the equivalences between $\NIN_{\alt}'$ and items (iv)-(xix) has been established. 

\smallskip

The equivalence between items \eqref{bv3} and \eqref{bv7} is as follows: assume the latter and let $(X_{n})_{n\in \N}$ and $g:\N\di \N$ be as in item \eqref{bv3}. 
Define $(a_{x})_{x\in [0,1]}$ as follows:
\[
a_{x}:=  
\begin{cases}
0  & x\not \in \cup_{n\in \N} X_{n} \\
\frac{1}{2^{n}}\frac{1}{g(n)+1} & x\in X_{n} \textup{ and $n$ is the least such natural} 
\end{cases}.
\]
Clearly, this unordered sum is Cauchy and has upper bound $1$; if $y\in [0,1]$ is such that $a_{y}=0$, then $y\not \in \cup_{n\in\N}X_{n}$, as required for item \eqref{bv3}.
Now assume the latter and let $(a_{x})_{x\in [0,1]}$ be an unordered sum that is Cauchy.  Now consider the following set:
\be\label{lort}
X_{n}:=\{x\in [0,1]: a_{x}>1/2^{n}\}.
\ee
Apply the Cauchy property of $(a_{x})_{x\in [0,1]}$ for $\eps=1$, yielding an upper bound $N_{0}\in \N$ for $\sum_{x\in K}a_{x}$ for any $K\in \fin{(\R)}$.
Hence, the finite set $X_{n}$ in \eqref{lort} has size at most $2^{n}N_{0}$.  In this way, we have a width function for $(X_{n})_{n\in \N}$; 
any $y\in [0,1]\setminus \cup_{n\in \N}X_{n}$ is such that $a_{y}=0$, as required for item \eqref{bv7}.
Item \eqref{bv77} now follows in the same way as for item \eqref{lopi} in the proof of Theorem \ref{flonk}. 

\smallskip

For item \eqref{bv8}, assume the latter and note that \eqref{kik} establishes that $f\in BV$ implies $f\in \Lambda BV$ for \emph{any} Waterman sequence $\Lambda=(\lambda_{n})_{n\in \N}$:
\be\label{kik}\textstyle
 \sum_{i=0}^{n} \lambda_{i} |f(x_{i})-f(x_{i+1})|\leq \sum_{i=0}^{n} \lambda_{1} |f(x_{i})-f(x_{i+1})|\leq \lambda_{1} V_{0}^{1}(f),
\ee
as Waterman sequences are decreasing by definition.
Hence, item~\eqref{bv3} readily follows from item \eqref{bv8}.  Now assume item \eqref{bv3} and recall that for $BV$-functions with variation bounded by $1$, the set $X_{n}$ from \eqref{sameold} can have at most $2^{n}$ elements, as each element of $X_{n}$ contributes at least $\frac{1}{2^{n}}$ to the total variation. Functions with Waterman variation bounded by $1$ similarly come with explicit upper bounds on the set $X_{n}$, namely $|X_{n}|\leq K_{n}$ where $K_{n}$ is the least $k\in \N$ such that $2^{n}<\sum_{m=0}^{k}\lambda_{m}$.  
Hence, item \eqref{bv8} follows and we are done. 
%
%\smallskip
%
%For item \eqref{twang}, $\NIN_{\alt}'$ (via item (iv) of this theorem) provides a point $y\in (0,1)$ were $f:[0,1]\di \R$ in $BV$ is continuous.  
%In case $f$ is also Riemann integrable, $\overline{f}$ is well-defined and the usual epsilon-delta proof shows that $f(y)=\overline{f}(y)$.
%For the reverse implication, $k:[0,1]\di \R$ from \eqref{mopi2} is totally discontinuous and Riemann integrable with zero integral; the latter is proved via 
%the epsilon-delta definition of the Riemann integral.  
%Thus, item \eqref{twang} also yields $\NIN_{\alt}'$.
%\smallskip
%
%Regarding item \eqref{bv6}, $k:[0,1]\di \R$ from \eqref{mopi2} is Riemann integrable, with $\int_{0}^{1}k(x)dx=0$, which one proves in the same way as for $h:[0,1]\di \R$ from \eqref{mopi} in the proof of Theorem~\ref{duck}.  Similarly, the Fourier coefficients of the Fourier series of $k:[0,1]\di \R$ are zero.  Hence, any $x_{0}\in (0,1)$ where the Fourier series of $k:[0,1]\di \R$ converges to $k(x_{0})$ must be such that $k(x_{0})=0$, as required for $\NIN_{\alt}'$.    
%
%\smallskip
%
%Finally, the implication involving item \eqref{bv9} is immediate since the unit interval has measure 1 \emph{and} such basic measure theory can be developed in $\RCA_{0}$ (see \cite{simpson2}*{X.1}).
\end{proof}
%NEW 
Following the proof of Theorem \ref{ducksss}, we observe that we may restrict to $BV$-function that are Baire 2 or Baire 1$^{*}$ in the previous theorem.
%Theorem \ref{lonk} can also be obtained for $BV$-functions and $\NIN_{\alt}'$, which we leave to the interested reader. 
With the gift of hindsight we can even obtain the following corollary.
We recall that the space of regulated functions is the union over all Waterman sequences $\Lambda=(\lambda_{n})_{n\in \N}$ of the spaces $\Lambda BV$, as established in \cite{voordedorst}*{Prop.\ 2.24}.
%\vspace{1mm}
\begin{cor}[Some generalisations]~
\begin{itemize}
\item We may replace `$f\in \Lambda BV$' in items \textup{(xx)-(xxxvi)} of the theorem by `regulated function $f:[0,1]\di \R$ with $\Lambda=(\lambda_{n})_{n\in \N}$ such that $f\in \Lambda BV$'.
\item Assuming $\ACAo+\QFAC^{0,1}+\FIN$, the higher item implies the lower one.  
\begin{itemize}
\item For any regulated $f:[0,1]\di \R$, there is a Waterman sequence $\Lambda=(\lambda_{n})_{n\in \N}$ such that $f\in \Lambda BV$.
\item The uniform finite union theorem.
\end{itemize}
\end{itemize}
\end{cor}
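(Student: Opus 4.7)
The plan for the first bullet is to observe that the two formulations carry the same data. Any $f \in \Lambda BV$ is regulated: the standard argument via \eqref{kik} embeds the $BV$-style proof of ``$\Lambda BV$ implies regulated'' into $\ACAo+\FIN$, following the template in Theorem \ref{flima}. Hence the pair $(f,\Lambda)$ automatically witnesses the reformulated hypothesis, while the converse direction is immediate. Thus items (xx)-(xxxvi) of Theorem \ref{flunk2} remain equivalent to $\NIN_{\alt}'$ after the substitution, and the same proofs carry over.

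For the second bullet, the plan is to re-use the construction $h:[0,1]\di\R$ from \eqref{mopi}. Given a sequence $(X_n)_{n\in\N}$ of finite sets, I would form $h$ and verify, as in the proof of Theorem \ref{duck}, that $h$ is regulated with $h(x-)=h(x+)=0$ for every $x\in[0,1]$; this uses $\FIN$ to enumerate $\cup_{k\leq n}X_k$ for each $n$ and $\QFAC^{0,1}$ to justify the limit computation. Invoking the higher item then yields a Waterman sequence $\Lambda=(\lambda_n)_{n\in\N}$ and a bound $k_0\in\N$ with $h\in \Lambda BV$.

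Next, I would extract a width function for $(\cup_{k\leq n}X_k)_{n\in\N}$ from the pair $(\Lambda,k_0)$. Fix $n\in\N$ and suppose $y_0<y_1<\dots<y_{N-1}$ all lie in $\cup_{k\leq n}X_k$; using $\FIN$, interleave them with points $z_0,\dots,z_{N-1}$ on which $h$ vanishes (which exist arbitrarily close to each $y_i$ because $h(y_i\pm)=0$), thereby producing pairwise non-overlapping intervals $(y_i,z_i)$ on which the variation picks up a jump of at least $1/2^{n+1}$. The defining inequality of Definition \ref{Warwar} then forces
\[
k_0 \;\geq\; \sum_{i=0}^{N-1} \lambda_i \cdot \tfrac{1}{2^{n+1}}.
\]
Since $\sum_{i} \lambda_i = \infty$ by the definition of a Waterman sequence, the least $g(n)\in\N$ with $\sum_{i\leq g(n)} \lambda_i > k_0\cdot 2^{n+1}$ exists and bounds $|\cup_{k\leq n}X_k|$, giving the uniform finite union theorem.

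The main obstacle will be the construction of the interleaving points $z_i$ in the weak base theory: one must first enumerate the finite union $\cup_{k\leq n}X_k$ via $\FIN$, then apply $\QFAC^{0,1}$ to pick a witness $z_i$ in each gap on which $h$ vanishes, all while keeping the overall argument in $\ACAo+\QFAC^{0,1}+\FIN$. A secondary subtlety, already addressed in Remark \ref{diunk}, is verifying regularity of $h$ when the union $\cup_n X_n$ happens to be dense; the same treatment applies here.
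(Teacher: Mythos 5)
Your treatment of the second bullet matches the paper's own proof: form $h$ as in \eqref{mopi}, apply the hypothesis to obtain $\Lambda$ and a variation bound, and read off a width bound for $\cup_{i\leq n}X_{i}$ from the divergence of $\sum_{m}\lambda_{m}$; your interleaving of the points $y_{i}$ with nearby points where $h$ (almost) vanishes is just the explicit justification, left implicit in the paper, of why each such point contributes $\lambda_{i}\cdot\frac{1}{2^{n+1}}$ to the Waterman sum via pairwise non-overlapping intervals. For the first bullet you take a genuinely different route: you reduce the reformulated items to the original ones by claiming that every $\Lambda BV$ function is regulated over the base theory, whereas the paper argues the forward direction directly — the regulated hypothesis is now *given*, so one forms the jump sets $X_{n}$ and extracts the width function $K_{n}$ from $\Lambda$ and the variation bound exactly as at the end of the proof of Theorem \ref{flunk2}, which is all that is needed to apply $\NIN_{\alt}'$. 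Your route works but incurs an obligation the paper deliberately avoids: the inclusion ``$\Lambda BV$ implies regulated'' is nowhere proved in the paper, requires its own finite-choice/induction argument in the style of the first item of Theorem \ref{flima} (and note that \eqref{kik} establishes $BV\subset\Lambda BV$, which points in the wrong direction for this purpose), and is delicate precisely because Theorem \ref{flunk2} is set up to avoid $\QFAC^{0,1}$. The paper's version never needs that lemma, since regularity is part of the reformulated hypothesis and the reversal only ever instantiates the items at $k$ from \eqref{mopi2}, which is regulated and lies in every $\Lambda BV$.
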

\begin{proof}
For the first item, the last part of the proof of the theorem provides the required upper bound function for applying $\NIN_{\alt}'$. 

\smallskip

For the second item, let $(X_{n})_{n\in \N}$ be a sequence of finite sets in $[0,1]$ and consider the regulated function $h:[0,1]\di \R$ as in \eqref{mopi}.
Suppose $h$ is in $\Lambda BV$ with $\Lambda=(\lambda_{n})_{n\in \N}$ and with upper bound $N_{0}\in \N$ on the Waterman variation.  
Then $A_{n+2}$ as in \eqref{lagel2} is $\cup_{i\leq n}X_{i}$ for all $n\in \N$ and $|A_n| \leq K_{n}$ where $K_{n}$ is the least $k\in \N$ such that $2^{n}N_{0}<\sum_{m=0}^{k}\lambda_{m}$.
The uniform finite union theorem thus follows.
\end{proof}
An equivalence is possible in the second item, but the technical details are considerable.  
%The notion of \emph{pseudo-monotone\footnote{A function $f:[0,1 ]\di \R$ is \emph{pseudo-monotone} if it is bounded and if $f^{-1}([c, d])$ is a finite union of (closed, open, half-open, or singleton) intervals for any $[c, d]\subset \R$.} function} is intermediate between monotone and $BV$ and was introduced in \cite{heiligejozef} under a different name.  %We have the following corollary. 
%The pseudo-monotone functions are the largest class that still yield $BV$-functions under composition with a $BV$-function (\cite{heiligejozef}).  The $BV$-function $f:[0,1]\di \R$ is not pseudo-monotone
%\[
%f(x):=
%\begin{cases}
%x^{2}\sin(\frac{1}{x}) & 0<x\leq 1\\
%0 & x=0 
%\end{cases},
%\]
%by \cite{voordedorst}*{Ex.\ 1.16}.  We have the following corollary to the theorem. 
%\begin{cor}
%The equivalences of the theorem go through if we replace `BV' by `pseudo-monotone'.
%\end{cor}
%\begin{proof}
%The function $k:[0,1]\di \R$ from \eqref{mopi2} is clearly pseudo-monotone.  The proof that pseudo-monotone functions are in $BV$ from \cite{voordedorst}*{Prop.\ 1.15}, amounts to nothing more than finite combinatorics, immediately formalised in the base theory of the theorem. 
%\end{proof}
Our above results suggest that the principles equivalent to $\NIN_{\alt}'$ also have a certain robustness since we can replace `one point' properties like item \eqref{bv7} in Theorem \ref{flunk2}, by e.g.\ `density' versions like item \eqref{bv77} in Theorem \ref{flunk2}.  Nonetheless, we believe we cannot replace `density' by `full measure'.  
In particular, we conjecture that `measure theoretic' statements like
\begin{itemize}
\item {a $BV$-function is continuous (or differentiable) almost$^{\ref{bruhair}}$ everywhere},
\item a height-width countable set $A\subset [0,1]$ 
has measure\footnote{The definition of `$A\subset [0,1]$ has measure zero set' can be written down without using the Lebesgue measure, just like in second-order RM (see \cite{simpson2}*{X.1}).\label{bruhair}} zero,
\end{itemize}
are \emph{strictly} stronger than $\NIN_{\alt}'$.  We do not have a proof of this claim.

%%NEW discussion of the variation function is necessary/interesting. 
\smallskip

Finally, the \emph{variation function} $\lambda x. V_{a}^{x}(f)$ is defined in the obvious way, namely based on \eqref{tomb}. 
This function shares pointwise properties like continuity and differentiability with $f:[a,b]\di \R$.  
For instance, the following equivalence for any $x\in [0,1)$ is obtained in \cite{hugsandkisses}*{Cor.\ 1.1}:
\be\label{EW}
\textup{\emph{$f$ is right-continuous at $x$ if and only if $\lambda x. V_{a}^{x}(f)$ is right-continuous at $x$}}.
\ee
Here, `right-continuous at $y\in [0,1)$' means $g(y)=g(y+)$.  Now, although the variation function may not exist for $BV$-functions, say in $\RCAo$, the right-hand side of \eqref{EW} 
makes sense using the well-known `virtual' or `comparative' meaning of suprema from second-order RM (see \cite{simpson2}*{X.1}).
Perhaps surprisingly, working over $\ACAo+\FIN+\neg \NIN_{\alt}'$, the function $k:[0,1]\di \R$ from \eqref{mopi2} satisfies:
\begin{center}
\emph{the function $\lambda x.V_{0}^{x}(k)$ is right-continuous for $x\in [0,1)$}, 
\end{center}
which is to be interpreted in the aforementioned virtual sense.  Thus, one readily proves that the following are equivalent, where $E(f, x)$ is \eqref{EW}.
%if we interpret the variation function in the `comparative' or `virtual' sense standard in second-order arithmetic.  Let $E(f, x)$ be \eqref{EW} and consider 
%the following theorem which expresses that equivalences like \eqref{EW} are hard to prove. 
%\begin{thm}[$\ACAo+\FIN$] 
%The following are equivalent.
\begin{itemize}
\item The uncountability of $\R$ as in $\NIN_{\alt}'$.
\item For $f:[0,1]\di \R$ in $BV$, there is $y\in [0,1]$ where $f$ is continuous.\label{ba2}
\item For $f:[0,1]\di \R$ in $BV$, there is $y\in [0,1]$ where $E(f, y)$ holds. 
\item For $f:[0,1]\di \R$ in $BV$ such that $\lambda x.V_{0}^{x}(f)$ is right-continuous on $[0,1)$, there is $y\in [0,1]$ where $f$ is right-continuous.
\end{itemize}
To be absolutely clear, we think this topic should \textbf{not} be pursued further: mistakes are (too) easily made when dealing with `virtual' objects like $\lambda x.V_{0}^{x}(f)$. 

%\end{thm}
%\begin{proof}
%The equivalence between item \eqref{ba2} and $\NIN_{\alt}'$ follows from Theorem \ref{flunk2}.  
%For item \eqref{ba3}, the usual 
%
%\end{proof}

\subsubsection{Advanced equivalences: Fourier series}
We obtain an equivalence for $\NIN_{\alt}'$ and properties of the Fourier series of $BV$-functions.  Since the forward direction is rather involved,
we have reserved a separate section for this result. Moreover, Theorem \ref{nirvana} is not at all straightforward: Jordan proves the convergence of Fourier series for $BV$-functions using the Jordan decomposition theorem, 
and the same for e.g.\ \cite{ziggy}*{p.\ 57-58}.  However, the latter theorem seems much stronger\footnote{The system $\FIVE^{\omega}$ plus the Jordan decomposition theorem can prove $\SIX$ (\cite{dagsamXI}).  By Theorem \ref{deville}, $\WHBU$ implies $\NIN_{\alt}'$, where the former seems weak in light of  \cite{dagsamVI}*{\S4}.} than $\NIN_{\alt}'$.  
%What is more, we are unable to produce an elementary proof that $BV$-functions are Riemann integrable (using the original definition), explaining the assumption on the Fourier coefficients. 
\begin{thm}[$\ACAo+\FIN$]\label{nirvana} 
The following are equivalent to $\NIN_{\alt}'$.
\begin{itemize}
\item For $f:[-\pi,\pi]\di \R$ in $BV$ such that the Fourier coefficients exist, there is $x_{0}\in (-\pi,\pi)$ where the Fourier series $S(f)(x_{0})$ equals $f(x_{0})$.  
\item For $f:[-\pi,\pi]\di \R$ in $BV$ such that the Fourier coefficients exist, the set of $x\in (-\pi, \pi)$ where the Fourier series $S(f)(x)$ equals $f(x)$, is dense \(or: not height countable, or: not countable, or: not strongly countable\).  
\end{itemize}
\end{thm}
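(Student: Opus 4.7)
My plan splits the proof along the two directions.

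For the easy direction (item~(i) implies $\NIN_{\alt}'$, from which item~(ii) is automatic), I adapt the counter-example $k:[0,1]\di\R$ from~\eqref{mopi2}, extended by zero to a function on $[-\pi,\pi]$. Given a sequence $(Y_n)_{n\in\N}$ of finite sets with width function $g\in\N^\N$ such that $[0,1]=\cup_n Y_n$, the proof of Theorem~\ref{flunk2} establishes that $k$ is $BV$ with variation at most $1$. The key claim is that all Fourier coefficients of $k$ vanish: for each $N$, the truncation $k_N$ obtained by restricting $k$ to $\cup_{m\leq N} Y_m$ is a finite sum of isolated spikes enumerable via $\FIN$, hence Riemann integrable with $\int k_N(t)\cos(nt)\,dt=0$, and similarly for $\sin(nt)$; and $\|k-k_N\|_\infty\leq 2^{-(N+2)}$, so a direct Darboux-sum estimate gives Riemann integrability of $k\cdot\cos(nt)$ together with $a_n=0=b_n$ for every $n$. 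Hence $S(k)\equiv 0$ identically on $(-\pi,\pi)$, so any $x_0$ with $S(k)(x_0)=k(x_0)$ must satisfy $k(x_0)=0$ and therefore $x_0\notin\cup_n Y_n$, yielding $\NIN_{\alt}'$.

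For the forward direction ($\NIN_{\alt}'$ implies item~(ii), hence item~(i)), let $f:[-\pi,\pi]\di\R$ be in $BV$ with existing Fourier coefficients. By the equivalences in Theorem~\ref{flunk2} restricted to $BV$-functions, the continuity set $C_f$ is dense in $(-\pi,\pi)$ and not height (resp.\ strongly) countable. It thus suffices to establish the Dirichlet--Jordan convergence theorem $S(f)(x_0)=f(x_0)$ at every $x_0\in C_f$. Using the Dirichlet kernel $D_N(t)=\sin((N+\tfrac{1}{2})t)/(2\sin(t/2))$ and the normalisation $\tfrac{1}{\pi}\int_{-\pi}^\pi D_N(t)\,dt=1$, the partial-sum difference equals $\tfrac{1}{\pi}\int_{-\pi}^\pi(f(x_0+t)-f(x_0))D_N(t)\,dt$, which I split at $|t|=\delta$. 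On $|t|\geq\delta$ the factor $t\mapsto(f(x_0+t)-f(x_0))/(2\sin(t/2))$ is Riemann integrable and the Riemann--Lebesgue lemma gives convergence to zero. On $|t|<\delta$, Abel summation on a Riemann sum approximation, combined with the classical uniform bound $\|J_N\|_\infty\leq C$ for $J_N(t):=\int_0^t D_N(s)\,ds$, yields an estimate of the form $C\bigl(\sup_{|t|<\delta}|f(x_0+t)-f(x_0)|+V_{x_0-\delta}^{x_0+\delta}(f)\bigr)$. The first term vanishes with $\delta$ by continuity at $x_0$, and the remaining task reduces to the local-variation estimate $V_{x_0-\delta}^{x_0+\delta}(f)\to 0$.

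The principal obstacle is exactly this local-variation estimate, since the classical route via Jordan decomposition is strictly stronger than $\NIN_{\alt}'$ by Theorem~\ref{deville}. I plan a direct contradiction argument in $\ACAo+\FIN$. Let $V$ be the global $BV$ upper bound, set $L:=\inf_{\delta>0} V_{x_0}^{x_0+\delta}(f)\in[0,V]$, and suppose $L>0$. By continuity at $x_0$, fix $\delta_0$ with $|f(x_0+t)-f(x_0)|<L/4$ for $t\in(0,\delta_0)$; take $\delta\in(0,\delta_0)$ and a partition $x_0=s_0<s_1<\cdots<s_m=x_0+\delta$ whose variation sum is at least $V_{x_0}^{x_0+\delta}(f)-L/8$. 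The first increment is bounded by $L/4$, so the sub-partition $(s_1,\ldots,s_m)$ shows $V_{s_1}^{x_0+\delta}(f)\geq V_{x_0}^{x_0+\delta}(f)-3L/8$. Monotonicity of $\delta\mapsto V_{x_0}^{x_0+\delta}(f)$ and the definition of $L$ give $V_{x_0}^{s_1}(f)\geq L$, and additivity of variation then yields
\[
V_{x_0}^{x_0+\delta}(f)=V_{x_0}^{s_1}(f)+V_{s_1}^{x_0+\delta}(f)\geq L+V_{x_0}^{x_0+\delta}(f)-3L/8,
\]
i.e.\ $0\geq 5L/8>0$, a contradiction. A symmetric argument handles the left side. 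Combined with Abel summation and Riemann--Lebesgue, this yields $S(f)(x_0)=f(x_0)$ at every $x_0\in C_f$, and hence both the pointwise and density (resp.\ non-(height/strongly) countable) versions of the theorem.
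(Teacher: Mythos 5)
Your reverse direction (item~(i) $\Rightarrow \NIN_{\alt}'$ via the function $k$ from \eqref{mopi2} with vanishing Fourier coefficients) is exactly the paper's argument. The forward direction is where you diverge, and where there is a genuine problem.

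The paper deliberately does \emph{not} run the classical Dirichlet-kernel proof: after reducing to ``$S(f)(x_{0})=f(x_{0})$ at continuity points via Theorem \ref{flunk2}'', it invokes Waterman's elementary convergence proof, or alternatively the chain Fej\'er $+$ ($a_{n},b_{n}=O(1/n)$ for $BV$) $+$ Hardy's Tauberian theorem $+$ consistency of Ces\`aro summation, precisely because each step there formalises in $\ACAo$ without ever touching the variation function. Your route instead hinges on the local-variation estimate $V_{x_{0}-\delta}^{x_{0}+\delta}(f)\to 0$, and your proof of that estimate manipulates $V_{x_{0}}^{x_{0}+\delta}(f)$, its infimum $L$, near-optimal partitions within $L/8$ of the supremum, and the additivity identity $V_{a}^{c}=V_{a}^{b}+V_{b}^{c}$ as if these were all actual real numbers. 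In the setting of this theorem they are not: the base theory is $\ACAo+\FIN$, ``$BV$'' means Definition \ref{varvar}\eqref{donp} (only an upper bound $k_{0}$ on partition sums is given), and the paper stresses that the supremum \eqref{tomb} is a ``virtual'' object whose existence is computationally strong (second cluster theorem of \cite{dagsamXIII}; see also the explicit warning at the end of Section \ref{REZ} that ``mistakes are (too) easily made when dealing with `virtual' objects like $\lambda x.V_{0}^{x}(f)$''). So your key lemma, as written, presupposes objects the base theory cannot form; the inner infimum $L$ of virtual suprema is even further out of reach. A repair is conceivable --- one can phrase the contradiction purely with upper bounds, by extracting, from the failure of the estimate, arbitrarily many pairwise disjoint subintervals near $x_{0}$ each carrying partition sum $\geq 3\epsilon_{0}/4$ and concatenating them to exceed $k_{0}$ --- but that is a different argument from the one you gave, and selecting the witnessing partitions then needs a choice principle not present in $\ACAo+\FIN$.

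A secondary gap: your Riemann--Lebesgue step requires the Riemann integrability of $t\mapsto (f(x_{0}+t)-f(x_{0}))/(2\sin(t/2))$ on $|t|\geq\delta$. The theorem's hypothesis only grants existence of the Fourier coefficients of $f$ itself (which suffices to form the partial sums $S_{N}(f)(x_{0})$ as finite linear combinations), and the paper explicitly notes that Riemann integrability of general $BV$-functions is \emph{not} available at this level (it ``seems to dwarf $\NIN_{\alt}'$''), since the usual proof runs through oscillation sums, i.e.\ again through suprema. So this step also imports an assumption the base theory does not supply.
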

\begin{proof}
Assume the first item and consider $k:[0,1]\di \R$ from \eqref{mopi2}.  This function is Riemann integrable with $\int_{0}^{1}k(x)dx=0$, which one proves in the same way as for $h:[0,1]\di \R$ from \eqref{mopi} in the proof of Theorem~\ref{duck}.  Similarly (and with a suitable rescaling), the Fourier coefficients of the Fourier series of $k$ are zero.  Hence, any $x_{0}$ where the Fourier series of $k$ converges to $k(x_{0})$ must be such that $k(x_{0})=0$, as required for $\NIN_{\alt}'$ since then $x_{0}\not \in \cup_{n\in \N}X_{n}$.    

\smallskip

Secondly, by items (iv)-(xix) in Theorem \ref{flunk2}, $\NIN_{\alt}'$ implies that for a $BV$-function, the set $C_{f}$ is dense  \(or: not height countable, or: not countable, or: not strongly countable).  Hence, the second item from Theorem \ref{nirvana} is immediate \emph{if} 
we can show that $S(f)(x)$ from Definition \ref{popolop} equals $\frac{f(x+)-f(x-)}{2}$ for $f$ in $BV$ and any $x$ in the domain.  Waterman provides an elementary and \emph{almost} self-contained  
proof of this convergence fact in \cite{meerwater}, avoiding the Jordan decomposition theorem and only citing \cite{ziggy}*{Vol I, p.\ 55, (7.1)}.  The proof of the latter is straightforward trigonometry and Waterman's argument is readily formalised in $\ACAo$. 
Similarly, there are `textbook' proofs that $S(f)(x)$ equals $\frac{f(x+)-f(x-)}{2}$ for $f$ in $BV$ and $x$ in the domain that avoid the Jordan decomposition theorem.  
Such proofs generally seem to proceed as follows (see e.g.\ \cite{ziggy, watsons, easypeasyfourieranalysie}).
\begin{itemize}
\item By \emph{Fej\'er's theorem} (\cite{ziggy}*{p.\ 89} or \cite{watsons}*{p.\ 170}), Fourier series of $BV$-functions are convergent in the C\'esaro mean to $\frac{f(x+)-f(x-)}{2}$.  
\item For $BV$-functions, Fourier coefficients are $O(\frac{1}{n})$ (\cite{watsons}*{p.\ 172},\cite{ziggy}*{p.\ 48}).  
\item By \emph{Hardy's theorem} (\cite{ziggy}*{p.\ 78} or \cite{watsons}*{p.\ 156}), if a series converges in the C\'esaro mean, it also converges in case the terms are $O(\frac{1}{n})$. 
\item By \emph{C\'esaro's method of summation} (see \cite{watsons}*{p.\ 155}), if a series converges in the C\'esaro mean to a limit $s$ and the series also converges, then the series converges to the limit $s$.  
\end{itemize}
Each of these results has an elementary (sometimes tedious and lengthy) proof that readily formalises in $\ACAo$.  As an example, the proofs of the second item in \cite{watsons}*{p.~172} and \cite{voordedorst}*{p.~288} make use of the Jordan decomposition theorem, while the proofs in \cite{tafel} and \cite{ziggy}*{p.\ 48} do not \emph{and} are completely elementary. 
Finally, the perhaps `most elementary' proof based on the above items is found in \cite{easypeasyfourieranalysie}. 
\end{proof}
Regarding the conditional nature of the items in Theorem \ref{nirvana}, the Fourier coefficients of $BV$-functions of course exist by the Lebesgue criterion for the Riemann integral.  
However, that $BV$-functions have a point of continuity is already non-trivial by items (iv)-(xix) in Theorem \ref{flunk2}.  Moreover, the Darboux formulation of the Riemann integral
involves suprema of $BV$-functions, which are hard to compute by the second cluster theorem in \cite{dagsamXIII}.   

\smallskip

Finally, we could generalise the items from Theorems \ref{flunk2} and \ref{nirvana} to other notions of `generalised' bounded variation.   
The latter notions yield (many) intermediate spaces between $BV$ and regulated as follows. 
\begin{rem}[Between bounded variation and regulated]\label{essenti}\rm 
The following spaces are intermediate between $BV$ and regulated; all details may be found in \cite{voordedorst}.  

\smallskip

Wiener spaces from mathematical physics (\cite{wiener1}) are based on \emph{$p$-variation}, which amounts to replacing `$ |f(x_{i})-f(x_{i+1})|$' by `$ |f(x_{i})-f(x_{i+1})|^{p}$' in the definition of variation \eqref{tomb}. 
Young (\cite{youngboung}) generalises this to \emph{$\phi$-variation} which instead involves $\phi( |f(x_{i})-f(x_{i+1})|)$ for so-called Young functions $\phi$, yielding the Wiener-Young spaces.  
Perhaps a simpler construct is the Waterman variation (\cite{waterdragen}), which involves $ \lambda_{i}|f(x_{i})-f(x_{i+1})|$ and where $(\lambda_{n})_{n\in \N}$ is a sequence of reals with nice properties; in contrast to $BV$, any continuous function is included in the Waterman space (\cite{voordedorst}*{Prop.\ 2.23}).  Combining ideas from the above, the \emph{Schramm variation} involves $\phi_{i}( |f(x_{i})-f(x_{i+1})|)$ for a sequence $(\phi_{n})_{n\in \N}$ of well-behaved `gauge' functions (\cite{schrammer}).  
As to generality, the union (resp.\ intersection) of all Schramm spaces yields the space of regulated (resp.\ $BV$) functions, while all other aforementioned spaces are Schramm spaces (\cite{voordedorst}*{Prop.\ 2.43 and 2.46}).
In contrast to $BV$ and the Jordan decomposition theorem, these generalised notions of variation have no known `nice' decomposition theorem.  The notion of \emph{Korenblum variation} (\cite{koren}) does have such a theorem (see \cite{voordedorst}*{Prop.\ 2.68}) and involves a distortion function acting on the \emph{partition}, not on the function values (see \cite{voordedorst}*{Def.\ 2.60}).  
%The same holds for Watermann spaces, Young spaces, and Schramm spaces (see in particular \cite{voordedorst}*{Prop.\ 2.43 and 2.46}).  The $\Omega$-cluster contains functionals defined using `regulated' replaced with any of those. 
\end{rem}
It is no exaggeration to say that there are \emph{many} natural spaces between the regulated and $BV$-functions, all of which yield equivalences in Theorem~\ref{flunk2}.

\subsection{When more is less in Reverse Mathematics}\label{plif}
An important -if not central- aspect of analysis is the relationship between its many function classes.  
It goes without saying that these relationships need not hold in weak logical systems.   % as also discussed in Sections \ref{restri}. 
For instance, the well-known inclusion \emph{regulated implies Baire 1} is not provable in $\Z_{2}^{\omega}+\QFAC^{0,1}$ by \cite{dagsamXIV}*{Theorem 2.34}.  

\smallskip

In this section, we establish a kind of dual to the previous negative result: we show that most of the above statements that are equivalent to $\NIN_{\alt}$ or $\NIN_{\alt'}$, become provable in the much weaker system $\ACAo+\QFAC^{0,1}+\FIN$ if we additionally require the functions to be Baire 1.  
%By contrast, we show that the equivalences $\NIN_{\alt}'$ do not change if we add the condition \emph{differentiability almost everywhere} with the latter formulated in terms of Dini derivatives, following the Denjoy-Young-Saks theorem. 
%In the above, we have established numerous equivalences for the uncountability of $\R$, also involving the following theorem (see Theorems \ref{duck} and \ref{flunk2}):
%\begin{center}
%\emph{a $BV$ \(or regulated\) function on the unit interval has a point of continuity},
%\end{center}
%which implies $\NIN$ as a result.
%Since $\Z_{2}^{\omega}+\QFAC^{0,1}$ cannot prove $\NIN$ (see \cite{dagsamX}), the centred statement is hard to prove in terms of conventional comprehension.  
%In this section, we show that this hardness \emph{disappears} if we also require that the functions at hand are Baire 1.  
%Of course, $BV$-functions are Baire 1, say in $\ZF$, but $\NIN$ actually follows from the statement that $BV$-functions are Baire 1 (see \cite{dagsamXIV}*{Theorem 2.34}), i.e.\ 
%there is no contradiction here. 
To this end,  we need the following theorem, where a \emph{jump continuity} is any $x\in (0,1)$ such that $f(x+)\ne f(x-)$.
\begin{thm}[$\ACAo$]\label{falm}
If $f:[0,1]\di \R$ is regulated, there is a sequence of reals containing all jump discontinuities of $f$.
\end{thm}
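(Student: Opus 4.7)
We work in $\ACAo$, which contains Kleene's $\exists^{2}$ (equivalently $\mu^{2}$, Section~\ref{lll}). By Remark~\ref{atleast}, $\mu^{2}$ computes the one-sided limits $f(x-) = \lim_{k} f(x-1/2^{k})$ and $f(x+) = \lim_{k} f(x+1/2^{k})$ arithmetically from $f$ at every $x\in[0,1]$; these limits exist because $f$ is regulated. In particular, the set of jump discontinuities decomposes arithmetically as
\[
J \;=\; \bigcup_{n\in\N} J_{n}, \qquad J_{n} \;:=\; \bigl\{x\in(0,1) : |f(x+)-f(x-)| > 2^{-n}\bigr\}.
\]
It suffices to produce, uniformly arithmetically in $n$, a sequence enumerating each $J_{n}$; a standard dovetailing over $n$ then yields the required global enumeration.

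\textbf{Step 1: finiteness of $J_{n}$.} I aim to show each $J_{n}$ is finite by a direct arithmetical bisection that avoids $\QFAC^{0,1}$ (which was invoked for the analogous set $X_{n}$ in \eqref{sameold} during the proof of Theorem~\ref{flonk}). The strategy is the following. Replace the predicate ``this dyadic interval $I$ contains $\geq M$ elements of $J_{n}$'' by an \emph{arithmetical surrogate}: existence of pairwise separated rationals $q_{1}<\cdots<q_{M}$ in $I$, together with rational witnesses to one-sided limits in shrinking sub-intervals around each $q_{i}$, certifying via $\mu^{2}$ that $|f(q_{i}+)-f(q_{i}-)|>2^{-n}$. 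Now suppose for contradiction that $J_{n}$ has more than $M$ elements for every $M$. By pigeonhole at dyadic level $k$, some subinterval contains at least $M/2^{k}$ elements; selecting the leftmost such interval at each stage gives an arithmetically definable nested sequence $I_{0}\supseteq I_{1}\supseteq\ldots$ shrinking to a single point $c\in[0,1]$. In every neighbourhood of $c$, $f$ then exhibits arbitrarily many jumps of size exceeding $2^{-n}$, which one checks is incompatible with the existence of both one-sided limits $f(c+)$ and $f(c-)$ in the usual $\eps$--$\delta$ sense.

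\textbf{Step 2: enumeration of $J_{n}$.} Once $J_{n}$ is known to have at most some $N_{n}$ elements, a finite-breadth binary search over dyadic subintervals of $[0,1]$ (retaining at each stage only those subintervals detected to contain an element of $J_{n}$ by the surrogate predicate of Step~1) produces a finitely-branching tree with at most $N_{n}$ infinite paths, each converging to a distinct element of $J_{n}$. Reading off the limits of these paths and dovetailing over $n$ yields the desired sequence.

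\textbf{Main obstacle.} The substantive work is Step~1: without $\QFAC^{0,1}$ one cannot first extract a sequence from an ``infinite'' subset of $[0,1]$ and then apply Bolzano--Weierstrass as in Theorem~\ref{flonk}. Instead, the accumulation point must be produced directly by an arithmetically definable bisection. This works precisely because \emph{jump} discontinuities (unlike the more general discontinuities of \eqref{sameold}, which involve the value $f(x)$ itself) are pinpointed by purely one-sided behaviour of $f$ on rationals, so the entire search takes place inside the arithmetical fragment accessible to $\mu^{2}$, with no need for choice.
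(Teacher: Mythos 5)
Your overall strategy is sound, and you have correctly isolated the one fact that makes this theorem provable in $\ACAo$ without $\QFAC^{0,1}$ or $\FIN$: a \emph{jump} discontinuity is witnessed by the two one-sided limits alone, and for regulated $f$ these are determined by $f_{\upharpoonright \Q}$ and computable from it via $\mu^{2}$ (Remark~\ref{atleast}), so the whole construction lives in the second-order, arithmetical fragment where the accumulation point can be produced by definable bisection rather than by extracting a sequence with $\QFAC^{0,1}$, as had to be done for the sets $X_{n}$ of \eqref{sameold}. Note that the paper itself offers no argument here (it cites Theorem~2.16 of an earlier paper), so your self-contained route is a genuinely different one.

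There is, however, a concrete gap in Step~1: the arithmetical surrogate you write down for ``$I$ contains $\geq M$ elements of $J_{n}$'' asks for rationals $q_{1}<\dots<q_{M}$ in $I$ with $|f(q_{i}+)-f(q_{i}-)|>2^{-n}$, i.e.\ it certifies jumps located \emph{at rational points}. Jump discontinuities of a regulated function are in general irrational, so as written your surrogate is an under-approximation: for a function all of whose jumps are irrational it is false for every $M\geq 1$, the pigeonhole/bisection never starts, and the tree of Step~2 enumerates nothing. The repair is to use an \emph{over}-approximating surrogate that does not name the jump point: call a rational interval $I$ ``$n$-active'' if for every $\delta>0$ there are rationals $p<q$ in $I$ with $q-p<\delta$ and $|f(p)-f(q)|>2^{-n}$. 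This is arithmetical in $f_{\upharpoonright \Q}$ via $\mu^{2}$; every neighbourhood of a jump of size $>2^{-n+1}$ is $n$-active; and, conversely, taking for each $k$ the canonically least witnessing pair $(p_{k},q_{k})$ with $q_{k}-p_{k}<2^{-k}$ in the $k$-th interval of a shrinking nested chain of $n$-active intervals (an \emph{arithmetically defined} sequence of rationals, so no choice is needed before applying Bolzano--Weierstrass) and splitting into the left, right, and straddling cases, one sees that the chain converges to a point that is either a genuine jump of size $\geq 2^{-n}$ or a \emph{rational} point $q$ with $|f(q)-f(q+)|\geq 2^{-n}$ or $|f(q)-f(q-)|\geq 2^{-n}$. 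The extra rational points picked up by the over-approximation are harmless, since the theorem only asks for a sequence \emph{containing} all jump discontinuities: simply append an enumeration of $\Q\cap[0,1]$ to your output. With this corrected surrogate, your Steps~1 and~2 go through essentially as you describe.
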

\begin{proof}
This is immediate from \cite{dagsamXIV}*{Theorem 2.16}.
\end{proof}
\noindent
The following theorem should be contrasted with Theorems~\ref{ducksss} and \ref{flunk2}.  
\begin{thm}[$\ACAo+\FIN$]\label{falnt3}
For a Baire 1 function $f:[0,1]\di \R$ in $BV$, the points of continuity of $f$ are dense. 
\end{thm}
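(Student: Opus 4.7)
The plan is to formalise the classical Baire category proof that every Baire~$1$ function on $[0,1]$ has a dense set of continuity points, verifying that the construction goes through in $\ACAo+\FIN$. The $BV$ assumption plays only a contextual role: by Theorem \ref{flima} it guarantees $f$ is regulated, but is not needed for the density argument itself, and Theorem \ref{falm} is subsumed by the stronger conclusion we shall reach.

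Write $f=\lim_{n}f_{n}$ with each $f_{n}:[0,1]\di\R$ continuous, and fix an arbitrary subinterval $(a,b)\subseteq[0,1]$; I will produce a continuity point of $f$ inside $(a,b)$. For $n,k\in\N$, define
\[
A_{n,k}:=\{x\in[a,b]:(\forall i,j\geq n)(|f_{i}(x)-f_{j}(x)|\leq 1/2^{k})\}.
\]
Since each difference $f_{i}-f_{j}$ is continuous, the complement of $A_{n,k}$ in $[a,b]$ is a union (over $i,j\geq n$) of the open sets $\{x:|f_{i}(x)-f_{j}(x)|>1/2^{k}\}$; using $\exists^{2}$ this yields an RM-code for the closed set $A_{n,k}$ in the sense of \cite{simpson2}*{II.5}. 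Because $(f_{n}(x))_{n}$ is Cauchy for every $x$, we have $[a,b]=\bigcup_{n}A_{n,k}$ for each $k$.

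The next step is to invoke the Baire category theorem for coded closed sets, provable in $\ACAo$: for each $k$ it supplies some index $n_{k}$ and a non-degenerate closed subinterval $[c_{k},d_{k}]\subseteq A_{n_{k},k}$. Iterate inside the previously chosen $[c_{k-1},d_{k-1}]$ so that the $[c_{k},d_{k}]$ nest with lengths tending to $0$. Nested-interval completeness then yields a unique $x^{*}\in\bigcap_{k}[c_{k},d_{k}]\subseteq(a,b)$. On $[c_{k},d_{k}]$ the bound $|f_{i}(x)-f_{j}(x)|\leq 1/2^{k}$ holds for all $i,j\geq n_{k}$; passing to the pointwise limit gives $|f_{n_{k}}(x)-f(x)|\leq 1/2^{k}$ uniformly on $[c_{k},d_{k}]$. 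Thus near $x^{*}$ the function $f$ is uniformly $1/2^{k}$-close to the continuous $f_{n_{k}}$, forcing $\osc(f,x^{*})\leq 2/2^{k}$; as $k$ is arbitrary, $f$ is continuous at $x^{*}$.

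The main obstacle is verifying that the Baire category step formalises cleanly in $\ACAo+\FIN$: one needs the $A_{n,k}$ to admit RM-codes (which they do, via continuity of the $f_{n}$ together with $\exists^{2}$), and one needs BCT for such codes to be available in the base theory. The role of $\FIN$ is limited to the book-keeping of finite initial choices in the iterative construction, and the $BV$ hypothesis is not consumed by the argument---consistent with Remark \ref{LESS}, the real work is done by the presence of the continuous approximating sequence given by the Baire~$1$ assumption.
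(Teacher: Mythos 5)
Your argument is correct, but it is genuinely different from the one in the paper, and it in fact proves a stronger statement. You run the classical Baire category proof on the sets $A_{n,k}=\{x:(\forall i,j\geq n)(|f_{i}(x)-f_{j}(x)|\leq 2^{-k})\}$, never touching the $BV$ hypothesis; the paper's proof, by contrast, uses $BV$ essentially. There, one first enumerates the jump discontinuities via Theorem \ref{falm} (this is where regulatedness enters), and then captures the removable discontinuities with gap exceeding $2^{-k}$ by a $\Pi^{0}_{1}$-formula $\varphi(n_{0},k,x)$ comparing $f_{n}(x)$ with $f(q)$ for rational $q$; the point of $BV$ is that each real satisfying $\varphi(n_{0},k,\cdot)$ contributes at least $2^{-k}$ to the total variation, so the resulting closed sets $C_{n_{0},k}$ are \emph{finite} (at most $2^{k}$ elements), their complements are dense open, and a single application of the coded Baire category theorem finishes the proof. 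What the paper's route buys is this structural information -- the discontinuity set is exhibited as a sequence together with a union of uniformly finite sets, matching the height-width countability theme -- and a proof that adapts directly to the regulated case of Theorem \ref{klam} by trading the variation bound for $\QFAC^{0,1}$. What your route buys is generality: the conclusion holds for arbitrary Baire~1 functions, which would subsume both Theorem \ref{falnt3} and the first item of Theorem \ref{klam} (the latter without $\QFAC^{0,1}$).

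Two points deserve explicit verification in your write-up. First, the passage from the third-order continuous $f_{n}$ to a uniform sequence of RM-codes $\Phi_{n}$ requires $(\exists^{2})$; this is the same move the paper makes, and with codes in hand the complement of $A_{n,k}$ is $\Sigma^{0}_{1}$ with the extensionality property, so \cite{simpson2}*{II.5.7} yields coded closed sets and \cite{simpson2}*{II.5.8} supplies the category step. Second, the nested-interval recursion must be seen to stay inside $\ACAo$: at each stage the search for $(n_{k},c_{k},d_{k})$ is a least-witness search over an arithmetical matrix of \emph{fixed} quantifier complexity, so a fixed number of applications of $\mu^{2}$ decides it and ordinary primitive recursion iterates it -- the complexity does not grow along the recursion, so no drift towards $\ATR_{0}$ occurs. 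Finally, note that $\FIN$ plays no role in your argument at all, which is harmless.
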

\begin{proof}
Let $f:[0,1]\di \R$ be Baire $1$ and in $BV$ with variation bounded by $1$.
This function is regulated by Theorem \ref{flima}.
Use Theorem \ref{falm} to enumerate the jump discontinuities of $f$ as $(y_{n})_{n\in \N}$. 
Let $(f_{n})_{n\in \N}$ be a sequence of continuous functions with pointwise limit $f$ on $[0,1]$ and consider the following formula:
\be\label{xdx}\textstyle
\varphi(n_{0}, k, x)\equiv(\forall n, m\geq n_{0})(\forall q\in B(x, \frac{1}{2^{m}})\cap \Q)(|f_{n}(x)-f(q)|\geq \frac{1}{2^{k}}+\frac{1}{2^{n_{0}}}).  
\ee
For fixed $k\in \N$, $\varphi(n_{0}, k, x)$ holds for large enough $n_{0}\in \N$ in case $f$ has a removable discontinuity at $x\in (0,1)$ such that $|f(x)-f(x+)|>\frac1{2^{k}}$.   For fixed $n_{0},k\in \N$, there can only be $2^{k}$ many pairwise distinct $x\in [0,1]$ such that $\varphi(n_{0}, k, x)$, as each
such real contributes at least $\frac1{2^{k}}$ to the total variation of $f$.  

\smallskip

Next, the formula $\varphi(n_{0}, k, x)$ is equivalent to (second-order) $\Pi_{1}^{0}$ as $f$ only occurs with rational input and $f_{n}$ can be replaced uniformly by a sequence of codes $\Phi_{n}$.  Moreover, in case $x=_{\R} y$, then trivially $\varphi(n_{0}, k, x)\asa \varphi(n_{0}, k, y)$, i.e.\ we have the extensionality property required for \cite{simpson2}*{II.5.7}.  By the latter there is an RM-code of a closed set $C_{n_{0},k}$  such that $x\in C_{n_{0},k}\asa \varphi(n_{0}, k, x)$ for all $x\in \R$ and $n_{0}, k\in \N$.  
Since $C_{n_{0}, k}$ is finite, $O_{n_{0}, k}:= [0,1]\setminus \big(C_{n_{0}, k}\cup \{y_{0}, \dots, y_{k}\} \big)$ is open and dense.  
By the Baire category theorem for RM-codes (\cite{simpson2}*{II.5.8}), the intersection $\cap_{n_{0}, k\in \N} O_{n_{0}, k} $ is dense in $[0,1]$. 
By definition, this intersection does not contain any points of discontinuity of $f$, and we are done.
\end{proof}
\noindent
The following corollary should be contrasted with Theorem \ref{nirvana}.  
\vspace{1mm}
\begin{cor}[$\ACAo+\FIN$]~
\begin{itemize}
\item For a positive unordered sum $\sum_{x\in [0,1]}a_{x}$ with upper bound and where $(a_{x})_{x\in[0,1]}$ is Baire 1, the set $\{y\in [0,1]:a_{y}=0\}$ is dense.
\item For Baire 1 function $f:[-\pi,\pi]\di \R$ in $BV$ such that the Fourier coefficients exist, the set  $\{x\in (-\pi, \pi): S(f)(x)=f(x)\}$, is dense.
\end{itemize}
%For a Riemann integrable $BV$-function $f:[0,1]\di [0,1]$ in Baire 1 with $\int_{0}^{1}f(x)dx=0$, there is $x\in [0,1]$ such that $f(x)=0$.
\end{cor}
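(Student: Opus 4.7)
For the first item, the plan is to realise the unordered sum as a $BV$-function and then invoke Theorem \ref{falnt3}. Given $(a_x)_{x\in[0,1]}$ positive with upper bound $N_0$, the mapping $\lambda x.a_x:[0,1]\to \R$ has bounded variation with $V_0^1\leq 2N_0$: for any partition $0=x_0<\dots<x_m=1$, the set $\{x_0,\dots,x_m\}$ lies in $\fin(\R)$, so
\[\textstyle
\sum_{i<m}|a_{x_i}-a_{x_{i+1}}|\leq \sum_{i<m}(a_{x_i}+a_{x_{i+1}})\leq 2\sum_{i\leq m}a_{x_i}\leq 2N_0.
\]
If only a Cauchy modulus $\Phi$ is available, I would first extract an upper bound $N_0:=1+\sum_{x\in\Phi(1)}a_x$, using $\FIN$ to enumerate the finite set $\Phi(1)$, and then proceed as above. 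Since $\lambda x.a_x$ is Baire $1$ by hypothesis, Theorem~\ref{falnt3} supplies a dense set $C\subset [0,1]$ of continuity points. To conclude, I would show $C\subset \{y\in [0,1]: a_y=0\}$: if $y\in C$ satisfied $a_y>1/2^n$ for some $n\in \N$, continuity would give $\delta>0$ with $a_x>1/2^{n+1}$ for all $x\in B(y,\delta)\cap [0,1]$; but the set $X_{n+1}:=\{x\in [0,1]: a_x>1/2^{n+1}\}$ has size bound $2^{n+1}N_0$ by the bounded-sum hypothesis, contradicting that $B(y,\delta)\cap [0,1]$ contains infinitely many reals (witnessed, in $\ACAo$, by the rationals).

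For the second item, Theorem~\ref{falnt3}, transposed to $[-\pi,\pi]$ by affine rescaling, gives that the continuity points of $f$ are dense in $[-\pi,\pi]$. At any such point $x_0\in (-\pi,\pi)$ one has $f(x_0+)=f(x_0-)=f(x_0)$, and the convergence argument assembled in the proof of Theorem~\ref{nirvana} (Fej\'er's theorem, the $O(1/n)$ estimate for Fourier coefficients of $BV$-functions, Hardy's theorem, and Ces\`aro summation, all formalisable in $\ACAo$) shows that $S(f)(x_0)=\frac{f(x_0+)+f(x_0-)}{2}=f(x_0)$. Hence $\{x\in (-\pi,\pi): S(f)(x)=f(x)\}$ contains a dense subset of $(-\pi,\pi)$ and is itself dense.

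The main obstacle is the BV-verification in the first item: while the estimate is elementary once the bounded-sum property is in hand, one must be slightly careful to formulate the variation bound using arbitrary finite partitions (not sequences with repetitions), and the Cauchy-modulus case requires $\FIN$ to turn $\Phi(1)\in \fin(\R)$ into an explicit numerical upper bound. Once these low-level verifications are secured, both items reduce cleanly to Theorem~\ref{falnt3}, the behaviour of $S(f)$ at continuity points being borrowed wholesale from the proof of Theorem~\ref{nirvana}; no further uncountability principle (in particular, neither $\NIN_{\alt}$ nor $\NIN_{\alt}'$) is invoked, which is precisely the point of the phenomenon described in Remark~\ref{LESS}.
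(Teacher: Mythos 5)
Your proposal is correct, and the second item matches the paper's treatment exactly (dense continuity points from Theorem \ref{falnt3}, then the convergence argument assembled in the proof of Theorem \ref{nirvana} at each continuity point). For the first item, however, you take a genuinely different and arguably cleaner route. The paper does \emph{not} pass through the observation that $\lambda x.a_{x}$ is itself a $BV$-function; instead it re-runs the Baire category argument of Theorem \ref{falnt3} directly on the level sets $X_{n}:=\{x\in [0,1]: a_{x}>1/2^{n}\}$ (each of size at most $2^{n}N_{0}$), using the Baire~1 approximation to express $a_{x}>1/2^{n}$ by a second-order $\Sigma_{1}^{0}$-formula and then invoking the second-order Baire category theorem. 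Your reduction -- positivity gives $|a_{x}-a_{y}|\leq a_{x}+a_{y}$, so any partition sum is bounded by $2N_{0}$, hence $\lambda x.a_{x}\in BV$ and Theorem \ref{falnt3} applies as a black box -- is sound, and the concluding step (a continuity point $y$ with $a_{y}>1/2^{n}$ would force an infinite subset of the finite set $X_{n+1}$) is exactly right. What your approach buys is modularity: the corollary becomes a two-line consequence of the theorem, and the Cauchy-modulus variant is handled uniformly once $\FIN$ extracts the bound $N_{0}$ from $\Phi(1)$. What the paper's approach buys is independence from the $BV$-verification: it works verbatim for any family whose level sets are finite with explicit size bounds, without needing to package the data as a function of bounded variation. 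Both rest on the same underlying Baire-category machinery, so there is no difference in the axioms used.
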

\begin{proof}
Let $(a_{x})_{x\in [0,1]}$ be an unordered sum with upper bound $1$.  Now consider
\be\label{port}
X_{n}:=\{x\in [0,1]: a_{x}>1/2^{n}\},
\ee
which can have at most $2^{n}$ elements.   As in the proof of the theorem, the Baire 1 approximation of $(a_{x})_{x\in [0,1]}$ allows 
us to show that $a_{x}>1/2^{n}$ is (implied by) a suitable (second-order) $\Sigma_{1}^{0}$-formula.  
One then uses the (second-order) Baire category theorem to show that $y\in [0,1]$ such that $a_{y}=0$ are dense.  

\smallskip

For the second item, following the proof of Theorem \ref{nirvana}, $f(x)=S(f)(x)$ holds in case $f$ is continuous at $x\in [0,1]$, where the latter is provided by the theorem. 
\end{proof}
%\begin{cor}[$\ACAo+\QFAC^{0,1}$]
%For regulated and Baire 1 $f:[0,1]\di \R$, the points of continuity of $f$ are dense. 
%\end{cor}
%\begin{proof}1
%The proof of the theorem can be modified follows:  for regulated $f$ and fixed $k\in \N$, there can be at most \emph{finitely many} $x\in [0,1]$ such that $|f(x)-f(x+)|>\frac1{2^{k}}$.  
%One proves this fact by contradiction, where $\QFAC^{0,1}$ provides a sequence $(x_{n})_{n\in \N}$ of reals in $ [0,1]$ such that $|f(x_{n})-f(x_{n}+)|>\frac1{2^{k}}$ for all $n\in \N$.  
%This sequence has a convergent sub-sequence, say with limit $y\in [0,1]$; one readily verifies that $f(y+)$ or $f(y-)$ does not exist.  The rest of the proof is now identical.  % to that of the theorem.
%\end{proof}
Next, the first item in Theorem \ref{klam} follow from \cite{dagsamXIV}*{Theorem 2.26}, but the latter is proved using $\ACAo+\ATR_{0}$.
\begin{thm}[$\ACAo+\FIN+\QFAC^{0,1}$]\label{klam}~
\begin{itemize}
\item For regulated Baire 1 $f:[0,1]\di \R$, the points of continuity of $f$ are dense. 
\item \emph{Volterra's corollary}: there is no regulated and Baire 1 function that is continuous on $\Q\cap[0,1]$ and discontinuous on $[0,1]\setminus\Q$.
\item For a Riemann integrable and regulated $f:[0,1]\di [0,1]$ in Baire 1 with $\int_{0}^{1}f(x)dx=0$, the set $\{x\in [0,1]:f(x)=0\}$ is dense.
\item Blumberg's theorem \(\cite{bloemeken}\) restricted to regulated Baire 1 functions on $[0,1]$.
\end{itemize}
\end{thm}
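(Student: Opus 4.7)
The plan is to adapt the proof of Theorem~\ref{falnt3} to the regulated case, replacing the $BV$-based finiteness bound by a compactness argument using $\QFAC^{0,1}$. For the first item, let $f:[0,1]\di\R$ be regulated and Baire~1, and fix a sequence $(f_n)_{n\in\N}$ of continuous functions converging pointwise to~$f$. By Theorem~\ref{falm} (which needs only $\ACAo$), enumerate the jump discontinuities of $f$ as $(y_n)_{n\in\N}$. Form exactly the formula $\varphi(n_0,k,x)$ of \eqref{xdx}; as in Theorem~\ref{falnt3}, it is equivalent to a $\Pi_1^0$-formula (by replacing the $f_n$ uniformly with second-order codes $\Phi_n$ via $\exists^2$) and has the extensionality property needed to invoke \cite{simpson2}*{II.5.7}, yielding RM-codes $C_{n_0,k}$ for the closed sets $\{x:\varphi(n_0,k,x)\}$.

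The key step, and the main obstacle compared to Theorem~\ref{falnt3}, is showing that each $C_{n_0,k}$ is finite without the aid of bounded variation. To this end, if $x\in C_{n_0,k}$, then letting $n\to\infty$ in the defining inequality gives $|f(x)-f(q)|\geq 1/2^k + 1/2^{n_0}$ for all rationals $q$ in every small enough neighbourhood of~$x$; since $f$ is regulated, letting $q\to x^{\pm}$ through rationals forces $|f(x)-f(x+)|>1/2^k$ and $|f(x)-f(x-)|>1/2^k$, so that $C_{n_0,k}\subseteq X_k$ with $X_k$ as in~\eqref{sameold}. The proof of Theorem~\ref{flonk} already establishes, via $\QFAC^{0,1}$ together with the Bolzano-Weierstrass theorem, that each such $X_k$ is finite: otherwise one extracts a sequence of distinct elements with a limit point $c\in[0,1]$ at which neither $f(c-)$ nor $f(c+)$ can exist, contradicting regulatedness. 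Consequently $O_{n_0,k}:=[0,1]\setminus(C_{n_0,k}\cup\{y_0,\dots,y_k\})$ is open and dense, and the Baire category theorem \cite{simpson2}*{II.5.8} for RM-codes yields the density of $\bigcap_{n_0,k\in\N}O_{n_0,k}$. A direct epsilon-delta calculation (identical to the one in Theorem~\ref{falnt3}) shows that any $x$ in this intersection satisfies $f(x)=f(x-)=f(x+)$, i.e.\ $f$ is continuous at~$x$.

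The remaining three items are immediate consequences of the first. Volterra's corollary follows because the dense set of continuity points must meet $[0,1]\setminus\Q$, contradicting the assumption that $f$ is discontinuous there. For the third item, if $f$ were positive at a continuity point~$x$, continuity would provide a neighbourhood on which $f$ exceeds a positive constant, forcing $\int_0^1 f(x)dx>0$; thus $f$ vanishes on its \emph{dense} set of continuity points, giving the density claim. For the restricted Blumberg theorem, take $D$ to be the dense set of continuity points of $f$ provided by the first item; then for every $x\in D$, continuity of $f$ on all of $[0,1]$ trivially restricts to continuity of $f_{\upharpoonright D}$ at $x$, as required.
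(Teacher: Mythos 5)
Your treatment of the first, third, and fourth items is correct and follows the paper's own route: the only new ingredient over Theorem \ref{falnt3} is that the finiteness of the sets $C_{n_{0},k}$ (equivalently, of the sets $X_{k}$ from \eqref{sameold} in which you embed them) is now obtained from $\QFAC^{0,1}$ plus the Bolzano--Weierstrass theorem rather than from the cardinality bound supplied by bounded variation, exactly as in the proof of Theorem \ref{flonk}; the Baire category argument of Theorem \ref{falnt3} is then unchanged, and the third and fourth items follow from the first as you describe.

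There is, however, a genuine gap in your derivation of Volterra's corollary (the second item). You claim that ``the dense set of continuity points must meet $[0,1]\setminus \Q$'', but density alone does not give this: $\Q\cap[0,1]$ is itself dense in $[0,1]$, and for a function that is continuous exactly on $\Q\cap[0,1]$ the set of continuity points is precisely $\Q\cap[0,1]$, which \emph{is} dense. Hence the first item produces no contradiction for such a function. What is needed is an \emph{irrational} point of continuity, and obtaining one requires going back inside the Baire category argument rather than quoting its conclusion: letting $(q_{n})_{n\in\N}$ enumerate $\Q\cap[0,1]$, replace each open dense set $O_{n_{0},k}$ by $O_{n_{0},k}':=O_{n_{0},k}\setminus\{q_{0},\dots,q_{k}\}$, which is still open and dense since only finitely many points are removed at each stage. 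The Baire category theorem for RM-codes applied to the sets $O'_{n_{0},k}$ then yields a dense set of irrational continuity points, contradicting the assumption that $f$ is discontinuous on $[0,1]\setminus\Q$. This is precisely how the paper handles this item.
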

\begin{proof}
For the first item, the proof of Theorem \ref{falnt3} can be modified follows:  for regulated $f$ and fixed $k\in \N$, there can be at most \emph{finitely many} $x\in [0,1]$ such that $|f(x)-f(x+)|>\frac1{2^{k}}$.  
One proves this fact by contradiction (as in the proof of Theorem \ref{flonk}), where $\QFAC^{0,1}$ provides a sequence $(x_{n})_{n\in \N}$ of reals in $ [0,1]$ such that $|f(x_{n})-f(x_{n}+)|>\frac1{2^{k}}$ for all $n\in \N$.  
This sequence has a convergent sub-sequence, say with limit $y\in [0,1]$; one readily verifies that $f(y+)$ or $f(y-)$ does not exist.  The rest of the proof is now identical to that of Theorem \ref{falnt3}. 

\smallskip

For the second item, the proof of Theorem \ref{falnt3} is readily adapted as follows: let $(q_{n})_{n\in \N}$ be an enumeration of the rationals in $[0,1]$ and define $O_{n_{0}, k}'$ as $O_{n_{0},k}\setminus \{q_{0}, \dots, q_{k}\}$.  Then Theorem \ref{falnt3} must yield an irrational point of continuity, a contradiction.

\smallskip

For the third item, note that $f(x)=0$ must hold in case $f$ is continuous at $x\in [0,1]$, where the (dense set of the) latter is provided by the first item.

\smallskip

For the fourth item, this immediately follows from the first item. 
\end{proof}
We could obtain similar results for most items of Theorem \ref{duck} and related theorems.  
We could also replace `Baire 1' by `effectively Baire 2' in Theorem \ref{ducksss} where the latter means that the function is given 
as the pointwise iterated limit of a double sequence of continuous functions; however, this would require us 
to go up to at least $\ATR_{0}$, as suggested by the results in \cite{dagsamXIV}*{\S2.6}.

\begin{ack}\rm
We thank Anil Nerode and Chris Impens for their valuable advice.
%We also thank the anonymous referee for the helpful suggestions.    
Our research was supported by the \emph{Deutsche Forschungsgemeinschaft} via the DFG grant SA3418/1-1 and the \emph{Klaus Tschira Boost Fund} via the grant Projekt KT43.
Some results were obtained during the stimulating \emph{Workshop on Reverse Mathematics and Philosophy}
in June 2022 in Paris, sponsored by the NSF FRG grant \emph{Collaborative Research: Computability Theoretic Aspects of Combinatorics}.
%
%workshop (ID 2046) on proof theory and constructive mathematics in Oberwolfach in early Nov.\ 2020.  
We express our gratitude towards the aforementioned institutions.    %Results in Appendix \ref{AAA} are due to myself and Dag Normann. 
\end{ack}

\begin{bibdiv}
\begin{biblist}
\bibselect{allkeida}
\end{biblist}
\end{bibdiv}
\bye

\appendix

\section{Old stuff}
\subsection{Preliminaries and definitions}
We briefly introduce \emph{Reverse Mathematics} and \emph{higher-order computability theory} in Section \ref{prelim}.
We introduce some essential definitions and axioms in Section \ref{lll}-\ref{deffer}.  A full introduction may be found in e.g.\ \cite{dagsamX}*{\S2}.
%In Section \ref{crux}, we motivate our choice of definitions, Definition \ref{openset} in particular.  
\subsubsection{Reverse Mathematics and higher-order computability theory}\label{prelimk}
Reverse Mathematics (RM hereafter) is a program in the foundations of mathematics initiated around 1975 by Friedman (\cites{fried,fried2}) and developed extensively by Simpson (\cite{simpson2}).  
The aim of RM is to identify the minimal axioms needed to prove theorems of ordinary, i.e.\ non-set theoretical, mathematics. 

\smallskip

We refer to \cite{stillebron} for a basic introduction to RM and to \cite{simpson2, simpson1} for an overview of RM.  We expect basic familiarity with RM, in particular Kohlenbach's \emph{higher-order} RM (\cite{kohlenbach2}) essential to this paper, including the base theory $\RCAo$.   An extensive introduction can be found in e.g.\ \cites{dagsamIII, dagsamV, dagsamX}.  % and Section \ref{rmbt}. 
%We have chosen to include a brief introduction as a technical appendix, namely Section \ref{RMA}.  
%All undefined notions may be found in the latter. 

\smallskip

Next, some of our main results will be proved using techniques from computability theory.
Thus, we first make our notion of `computability' precise as follows.  
\begin{enumerate}
\item[(I)] We adopt $\ZFC$, i.e.\ Zermelo-Fraenkel set theory with the Axiom of Choice, as the official metatheory for all results, unless explicitly stated otherwise.
\item[(II)] We adopt Kleene's notion of \emph{higher-order computation} as given by his nine clauses S1-S9 (see \cite{longmann}*{Ch.\ 5} or \cite{kleeneS1S9}) as our official notion of `computable'.
\end{enumerate}
%Similar to \cites{dagsam,dagsamII, dagsamIII, dagsamV, dagsamVI, dagsamVII}, one main aim of this paper is the study of functionals of type 3 that are \emph{natural} from the perspective of mathematical practise. 
We refer to \cite{longmann} for a thorough overview of higher-order computability theory.

\smallskip

Finally, it has been suggested to us that Kleene's S9 clause is somewhat mysterious and ad hoc.  
We refer to \cite{dagsamXII} for an equivalent-but-more-elegant $\lambda$-calculus formulation of S1-S9-computability based on fixed point operators.  
This new framework also accommodates partial functionals, a necessity as the central objects of study in \cite{dagsamXII} are \emph{fundamentally partial} in nature. 

%\subsection{Introducing Reverse Mathematics}\label{RMsec}
%
%
%\subsection{Introducing computability theory}
%See \cite{dagsamXIII} for the new $\lambda$-calculus model.  

\subsubsection{Some comprehension functionals}\label{lllk}
In second-order RM, the logical hardness of a theorem is measured via what fragment of the comprehension axiom is needed for a proof.  
For this reason, we introduce some axioms and functionals related to \emph{higher-order comprehension} in this section.
We are mostly dealing with \emph{conventional} comprehension here, i.e.\ only parameters over $\N$ and $\N^{\N}$ are allowed in formula classes like $\Pi_{k}^{1}$ and $\Sigma_{k}^{1}$.

\smallskip

First of all, the following functional is clearly discontinuous at $f=11\dots$; in fact, $(\exists^{2})$ is equivalent to the existence of $F:\R\di\R$ such that $F(x)=1$ if $x>_{\R}0$, and $0$ otherwise (\cite{kohlenbach2}*{\S3}).  This fact shall be repeated often.  
\be\label{muk}\tag{$\exists^{2}$}
(\exists \varphi^{2}\leq_{2}1)(\forall f^{1})\big[(\exists n)(f(n)=0) \asa \varphi(f)=0    \big]. 
\ee
Related to $(\exists^{2})$, the functional $\mu^{2}$ in $(\mu^{2})$ is also called \emph{Feferman's $\mu$} (\cite{avi2}).
\begin{align}\label{mu}\tag{$\mu^{2}$}
(\exists \mu^{2})(\forall f^{1})\big[ (\exists n)(f(n)=0) \di [f(\mu(f))=0&\wedge (\forall i<\mu(f))(f(i)\ne 0) ]\\
& \wedge [ (\forall n)(f(n)\ne0)\di   \mu(f)=0]    \big], \notag
\end{align}
We have $(\exists^{2})\asa (\mu^{2})$ over $\RCAo$ and $\ACAo\equiv\RCAo+(\exists^{2})$ proves the same sentences as $\ACA_{0}$ by \cite{hunterphd}*{Theorem~2.5}. 

\smallskip

Secondly, the functional $\SS^{2}$ in $(\SS^{2})$ is called \emph{the Suslin functional} (\cite{kohlenbach2}).
\be\tag{$\SS^{2}$}
(\exists\SS^{2}\leq_{2}1)(\forall f^{1})\big[  (\exists g^{1})(\forall n^{0})(f(\overline{g}n)=0)\asa \SS(f)=0  \big], 
\ee
The system $\FIVE^{\omega}\equiv \RCAo+(\SS^{2})$ proves the same $\Pi_{3}^{1}$-sentences as $\FIVE$ by \cite{yamayamaharehare}*{Theorem 2.2}.   
By definition, the Suslin functional $\SS^{2}$ can decide whether a $\Sigma_{1}^{1}$-formula as in the left-hand side of $(\SS^{2})$ is true or false.   We similarly define the functional $\SS_{k}^{2}$ which decides the truth or falsity of $\Sigma_{k}^{1}$-formulas from $\L_{2}$; we also define 
the system $\SIXK$ as $\RCAo+(\SS_{k}^{2})$, where  $(\SS_{k}^{2})$ expresses that $\SS_{k}^{2}$ exists.  
We note that the operators $\nu_{n}$ from \cite{boekskeopendoen}*{p.\ 129} are essentially $\SS_{n}^{2}$ strengthened to return a witness (if existant) to the $\Sigma_{n}^{1}$-formula at hand.  %  if it exists. 

\smallskip

\noindent
Thirdly, full second-order arithmetic $\Z_{2}$ is readily derived from $\cup_{k}\SIXK$, or from:
\be\tag{$\exists^{3}$}
(\exists E^{3}\leq_{3}1)(\forall Y^{2})\big[  (\exists f^{1})(Y(f)=0)\asa E(Y)=0  \big], 
\ee
and we therefore define $\Z_{2}^{\Omega}\equiv \RCAo+(\exists^{3})$ and $\Z_{2}^\omega\equiv \cup_{k}\SIXK$, which are conservative over $\Z_{2}$ by \cite{hunterphd}*{Cor.\ 2.6}. 
Despite this close connection, $\Z_{2}^{\omega}$ and $\Z_{2}^{\Omega}$ can behave quite differently, as discussed in e.g.\ \cite{dagsamIII}*{\S2.2}.   
The functional from $(\exists^{3})$ is also called `$\exists^{3}$', and we use the same convention for other functionals.

\subsubsection{Some basic definitions}\label{cdefs}
We introduce some definitions needed in the below, all stemming from mainstream mathematics.

\smallskip

First of all, we shall study the following notions of weak continuity.  % (see e.g.\ \cites{nieuwebron, kowalski}).
\bdefi\label{flung} For $f:[0,1]\di \R$, we have the following definitions:
\begin{itemize}
\item $f$ is \emph{upper semi-continuous} at $x_{0}\in [0,1]$ if $f(x_{0})\geq_{\R}\lim\sup_{x\di x_{0}} f(x)$,
\item $f$ is \emph{lower semi-continuous} at $x_{0}\in [0,1]$ if $f(x_{0})\leq_{\R}\lim\inf_{x\di x_{0}} f(x)$,
\item $f$ is \emph{quasi-continuous} (resp.\ \emph{cliquish}) at $x_{0}\in [0, 1]$ if for $ \epsilon > 0$ and an open neighbourhood $U$ of $x_{0}$, 
there is a non-empty open ${ G\subset U}$ with $(\forall x\in G) (|f(x_{0})-f(x)|<\eps)$ (resp.\ $(\forall x, y\in G) (|f(x)-f(y)|<\eps)$).
\end{itemize}
\edefi
Secondly, we also study \emph{unordered sums}, which are a device for bestowing meaning upon `uncountable sums' $\sum_{x\in I}f(x)$ for any index set $I$ and $f:I\di \R$.  
A central result is that if $\sum_{x\in I}f(x)$ somehow exists, it must be a `normal' series of the form $\sum_{i\in \N}f(y_{i})$, i.e.\ $f(x)=0$ for all but countably many $x\in [0,1]$; Tao mentions this theorem in \cite{taomes}*{p.~xii}. 
%We show that basic versions of this theorem yield $\NIN$, $\cocode_{0}$, and $\NBI$.  As expected, the exact formulation of `$\sum_{x\in I}f(x)$ exists' is responsible for the difference in strength.  

\smallskip

By way of motivation, there is considerable historical and conceptual interest in this topic: Kelley notes in \cite{ooskelly}*{p.\ 64} that E.H.\ Moore's study of unordered sums in \cite{moorelimit2} led to the concept of \emph{net} with his student H.L.\ Smith (\cite{moorsmidje}).
Unordered sums can be found in (self-proclaimed) basic or applied textbooks (\cites{hunterapp,sohrab}) and can be used to develop measure theory (\cite{ooskelly}*{p.\ 79}).  
Moreover, Tukey shows in \cite{tukey1} that topology can be developed using \emph{phalanxes}, which are nets with the same index sets as unordered sums.  

\smallskip

%Secondly, we have previously studied the RM of nets in \cites{samcie19, samnetspilot, samwollic19}, to which we refer for the definition of net in $\RCAo$.
Now, unordered sums are just a special kind of \emph{net} and $a:[0,1]\di \R$ is therefore written $(a_{x})_{x\in [0,1]} $ in this context to suggest the connection to nets.  
We only need the following notions in the below. 
Let $\fin(\R)$ be the set of all finite sequences of reals without repetitions.  %Recall that the basic definitions pertaining to nets are in Section \ref{netro}. 
\bdefi\label{kaukiek} Let $(a_{x})_{x\in [0,1]}$ be an unordered sum.
\begin{itemize}
\item We say that $(a_{x})_{x\in [0,1]} $ is \emph{Cauchy} if there is $\Phi:\R\di \fin(\R)$ such that for $\eps>0$ and all $J\in \fin({\R})$ with $J\cap\Phi(\eps)=\emptyset$, we have $|\sum_{x\in J}a_{x}|<\eps$.
\item We say that $(a_{x})_{x\in [0,1]} $ is \emph{bounded} if there is $N_{0}\in \N$ such that for any $J\in \fin(\R)$, $N_{0}>|\sum_{x\in J}a_{x}|$. 
\item We say that $(a_{x})_{x\in [0,1]} $ is \emph{convergent to the limit $a\in \R$} if there is $\Phi:\R\di \fin(\R)$ such that for $\eps>0, I\in \fin({\R})$, we have $|\sum_{x\in \Phi(\eps)}a_{x}|<\eps$.
\end{itemize}
\edefi
Note that in the first item, $\Phi$ is a \emph{Cauchy modulus}, while in the third item, it is a \emph{modulus of convergence}.
For simplicity, we focus on \emph{positive unordered sums}, i.e.\ $(a_{x})_{x\in [0,1]}$ such that $a_{x}\geq 0$ for $x\in [0,1]$.

\smallskip

Thirdly, we also need the following definition.  
\bdefi[Darboux property] Let $f:[0,1]\di \R$ and $x_{0}\in [0,1]$ be given. 
\begin{itemize}
\item A real $y\in \R$ is a left (resp.\ right) \emph{cluster value} of $f$ is there is $(x_{n})_{n\in \N}$ such that $y=\lim_{n\di \infty} f(x_{n})$ and $x=\lim_{n\di \infty}f(x_{n})$ and $(\forall n\in \N)(x_{n}\leq x)$ (resp.\ $(\forall n\in \N)(x_{n}\geq x)$).  
\item A point $x\in [0,1]$ is a \emph{Darboux point} of $f:[0,1]\di \R$ if for any $\delta>0$ and any left (resp.\ right) cluster value $y$ of $f$ and $z\in \R$ strictly between $y$ and $f(x)$, there is $w\in (x-\delta, x)$ (resp.\ $w\in ( x, x+\delta)$) such that $f(w)=y$.   
\end{itemize}
\edefi
By definition, a point of continuity is also a Darboux point, but not vice versa.

\smallskip

Fourth, we introduce our particular notion of `finite set' and provide some motivation in Remark \ref{diunk} right below.   On a historical note, the study of various definitions of finite set (in set theory) was the topic of Mostowski's dissertation, as suggested by Tarski (\cite{moserover}*{p.\ 18-19}). 
\begin{defi}[Finite]\label{deaddk}\rm
Any $X\subset \R$ is \emph{finite} if there is $N\in \N$ such that for any finite sequence $(x_{0}, \dots, x_{N})$ of distinct reals, there is $i\leq N$ such that $x_{i}\not \in X$.
\edefi
Note that the previous definition is not circular as `finite sequences or reals' are just objects of type $1$, modulo coding using $\exists^{2}$.
We now motivate Definition \ref{deadd}.
\begin{rem}[Finite sets by any other name]\label{diunkk}\rm
First of all, working in set theory, the various definitions\footnote{In $\ZF$, a set $A$ is `finite' if there is a bijection to $\{0, 1, \dots, n\}$ for some $n\in \N$; a set $A$ is `Dedekind finite' if any injective mapping from $A$ to $ A$ is also surjective.\label{krukk}} of `finite set' are not equivalent over $\ZF$, while countable choice suffices to establish the equivalence (\cite{jechp}).  Hence, it should not be a surprise that studying finite sets in weak systems requires one to choose a specific definition.  

%with finite sets requires extra axioms beyond the base theory.  
%As discussed in the next paragraph, $\IND_{2}$ seems essential for working with Definition \ref{deadd}; in an intellectually pleasing analogy, $\IND_{2}$ is a `finite' version of countable choice as in $\QFAC^{0,1}$.
%Moreover, $\IND_{2}$ also seems essential for proving that $BV$ is a sub-space of the regulated functions, by Theorem \ref{flima}. 

\smallskip

Secondly, consider the following set where $f$ is a function of bounded variation:
\be\label{lagel2k}\textstyle
%A_{n}:=\{x\in [0,1]: |f(x+)-f(x-)|>\frac{1}{2^{n}}    \}.
A_{n}:=\big\{x\in (0,1): |f(x+)- f(x)|>\frac1{2^{n}} \vee |f(x-)- f(x)|>\frac1{2^{n}}\big\} 
\ee
This set is finite as each element of $A_{n}$ contributes at least $\frac{1}{2^{n}}$ to the total variation of $f$.  
Finite as $A_{n}$ may be, we are unable to exhibit an injection (let alone bijection) from $A_{n}$ to $\{0,1, \dots, k\}$ for some $k\in \N$, say over $\Z_{2}^{\omega}+\QFAC^{0,1}$.  
A similar argument goes through in case $f$ is regular. 
By contrast, $A_{n}$ is trivially finite in the sense of Definition \ref{deadd} in $\ACAo$.  
Similarly, consider a set $A\subset \R$ without limit points.  
While $A\cap [-n, n]$ is finite (Definition \ref{deadd}) for any $n\in \N$ in $\ACAo+\QFAC^{0,1}$,  we are unable to exhibit an injection from $A\cap [-n,n]$ to $\{0,1, \dots, k\}$ for some $k\in \N$, say over $\Z_{2}^{\omega}+\QFAC^{0,1}$.

\smallskip

%
%Another case in point is:
%\be\label{poliop}
%(\forall X\subset [0,1])\big[\textup{$X$ is finite $ \asa$ $\mathbb{1}_{X}$ is in $BV$}\big].
%\ee
%which is trivial if we adopt Definition \ref{deadd}, yet highly non-trivial for other definitions of finite set.  
In conclusion, \emph{if} one wants to work in a weak logical system, \emph{then} (certain) finite sets that `appear in the wild' are best studied via Definition \ref{deadd}, and not the definition from Footnote~\ref{krukk} involving bijections or injections.  
%Moreover, Theorem~\ref{xruc} suggests that $\IND_{0}$ (and $\cocode_{0}$) does not suffice to study finite sets as in Definition \ref{deadd}; as noted in Remark \ref{flaw}, we indeed seem to need $\CUC_{\fin}$.
\end{rem}
The conclusion of Remark \ref{diunk} of course holds for countable sets as well: studying the latter is best done as in Definition \ref{hoogzalieleven}, as the latter behaves better in weak systems, in contrast to Definition \ref{standard}, which is more `set theoretical' in nature. 
\bdefi[Enumerable sets of reals]\label{enik}
A set $A\subset \R$ is \emph{enumerable} if there exists a sequence $(x_{n})_{n\in \N}$ such that $(\forall x\in \R)(x\in A\di (\exists n\in \N)(x=_{\R}x_{n}))$.  
\edefi
This definition reflects the RM-notion of `countable set' from \cite{simpson2}*{V.4.2}.  
We note that given $\mu^{2}$ from Section \ref{lll}, we may replace the final implication in Definition \ref{eni} by an equivalence. 
\bdefi[Countable subset of $\R$]\label{standardk}~
A set $A\subset \R$ is \emph{countable} if there exists $Y:\R\di \N$ such that $(\forall x, y\in A)(Y(x)=_{0}Y(y)\di x=_{\R}y)$. 
If $Y:\R\di \N$ is also \emph{surjective}, i.e.\ $(\forall n\in \N)(\exists x\in A)(Y(x)=n)$, we call $A$ \emph{strongly countable}.
\edefi
The first part of Definition \ref{standard} is from Kunen's set theory textbook (\cite{kunen}*{p.~63}) and the second part is taken from Hrbacek-Jech's set theory textbook \cite{hrbacekjech} (where the term `countable' is used instead of `strongly countable').  
For the rest of this paper, `strongly countable' and `countable' shall exclusively refer to Definition \ref{standard}, \emph{except when explicitly stated otherwise}. 
Remark \ref{diunk} suggest the following definition.
\begin{defi}\label{hoogzalielevenk}
A set $A\subset \R$ is \emph{height countable} if there is a \emph{height} $H:\R\di \N$ for $A$, i.e.\ for all $n\in \N$, $A_{n}:= \{ x\in A: H(x)<n\}$ is finite \(Definition \ref{deadd}\). 
\end{defi}
We note that the notion of `height' is mentioned in e.g.\ \cite{demol}*{p.\ 33} and \cite{vadsiger}.  A set is therefore height countable iff it is the union over $\N$ of finite sets.  

\subsubsection{Some advanced definitions: bounded variation and around}\label{deffer}
We formulate the definitions of bounded variation and regulated functions, as well as some background. 

\smallskip

Firstly, the notion of \emph{bounded variation} (often abbreviated $BV$ below) was first explicitly\footnote{Lakatos in \cite{laktose}*{p.\ 148} claims that Jordan did not invent or introduce the notion of bounded variation in \cite{jordel}, but rather discovered it in Dirichlet's 1829 paper \cite{didi3}.} introduced by Jordan around 1881 (\cite{jordel}) yielding a generalisation of Dirichlet's convergence theorems for Fourier series.  
Indeed, Dirichlet's convergence results are restricted to functions that are continuous except at a finite number of points, while $BV$-functions can have infinitely many points of discontinuity, as already studied by Jordan, namely in \cite{jordel}*{p.\ 230}.
Nowadays, the \emph{total variation} of a function $f:[a, b]\di \R$ is defined as follows:
\be\label{tombk}\textstyle
V_{a}^{b}(f):=\sup_{a\leq x_{0}< \dots< x_{n}\leq b}\sum_{i=0}^{n} |f(x_{i})-f(x_{i+1})|.
\ee
If this quantity exists and is finite, one says that $f$ has bounded variation on $[a,b]$.
Now, the notion of bounded variation is defined in \cite{nieyo} \emph{without} mentioning the supremum in \eqref{tomb}; this approach can also be found in \cites{kreupel, briva, brima}.  
Hence, we shall distinguish between the two notions in Definition \ref{varvar}.  As it happens, Jordan seems to use item \eqref{donp} of Definition \ref{varvar} in \cite{jordel}*{p.\ 228-229}.
This definition suggests a two-fold variation for any result on functions of bounded variation, namely depending on whether the supremum \eqref{tomb} is given, or only an upper bound on the latter.  
%providing further motivation for the functionals introduced in Definition \ref{JDR}.
\bdefi[Variations on variation]\label{varvar}
\begin{enumerate}  
\renewcommand{\theenumi}{\alph{enumi}}
\item The function $f:[a,b]\di \R$ \emph{has bounded variation} on $[a,b]$ if there is $k_{0}\in \N$ such that $k_{0}\geq \sum_{i=0}^{n} |f(x_{i})-f(x_{i+1})|$ 
for any partition $x_{0}=a <x_{1}< \dots< x_{n-1}<x_{n}=b  $.\label{donp}
\item The function $f:[a,b]\di \R$ \emph{has {a} variation} on $[a,b]$ if the supremum in \eqref{tomb} exists and is finite.\label{donp2}
\end{enumerate}
\edefi
%Secondly, the notion of \emph{bounded variation} was first introduced by Jordan around 1881 (\cite{jordel}) yielding a generalisation of Dirichlet's convergence theorems for Fourier series.  
%Indeed, Dirichlet's convergence results are restricted to functions that are continuous except at a finite number of points, while functions of bounded variation can have (at most) countable many points of discontinuity, as also shown by Jordan, namely in \cite{jordel}*{p.\ 230}.
Secondly, the fundamental theorem about $BV$-functions is formulated as follows.
\begin{thm}[Jordan decomposition theorem, \cite{jordel}*{p.\ 229}]\label{drd}
A $BV$-function $f : [0, 1] \di \R$ is the difference of  two non-decreasing functions $g, h:[0,1]\di \R$.
\end{thm}
Theorem \ref{drd} has been studied via second-order representations in \cites{groeneberg, kreupel, nieyo, verzengend}.
The same holds for constructive analysis by \cites{briva, varijo,brima, baathetniet}, involving different (but related) constructive enrichments.  
Now, $\ACA_{0}$ suffices to derive Theorem \ref{drd} for various kinds of second-order \emph{representations} of $BV$-functions in \cite{kreupel, nieyo}.  % i.e.\ finite iterations of the Turing jump suffice to compute the associated Jordan decomposition.  
By contrast, our results in \cite{dagsamXI} imply that $\Z_{2}^{\omega}+\QFAC^{0,1}$ cannot prove the third-order version of Theorem \ref{drd}.  % as the latter is equivalent to $\BWC_{0}$ over a suitable base theory. 
%the results in \cite{dagsamXII} imply that no functional $\SS_{k}^{2}$ can compute the Jordan decomposition $g, h$ from $f$ in Theorem~\ref{drd} in general.  %We explain this discrepancy in Section \ref{2DFOM}.

\smallskip

Thirdly, Jordan proves in \cite{jordel3}*{\S105} that $BV$-functions are exactly those for which the notion of `length of the graph of the function' makes sense.  In particular, $f\in BV$ if and only if the `length of the graph of $f$', defined as follows:
\be\label{puhe}\textstyle
L(f, [0,1]):=\sup_{0=t_{0}<t_{1}<\dots <t_{m}=1} \sum_{i=0}^{m-1} \sqrt{(t_{i}-t_{i+1})^{2}+(f(t_{i})-f(t_{i+1}))^{2}  }
\ee
exists and is finite by \cite{voordedorst}*{Thm.\ 3.28.(c)}.  In case the supremum in \eqref{puhe} exists (and is finite), $f$ is also called \emph{rectifiable}.  
Rectifiable curves predate $BV$-functions: in \cite{scheeffer}*{\S1-2}, it is claimed that \eqref{puhe} is essentially equivalent to Duhamel's 1866 approach from \cite{duhamel}*{Ch.\ VI}.  Around 1833, Dirksen, the PhD supervisor of Jacobi and Heine, already provides a definition of arc length that is (very) similar to \eqref{puhe} (see \cite{dirksen}*{\S2, p.\ 128}), but with some conceptual problems as discussed in \cite{coolitman}*{\S3}.

\smallskip

Fourth, a function is \emph{regulated} (called `regular' in \cite{voordedorst}) if for every $x_{0}$ in the domain, the `left' and `right' limit $f(x_{0}-)=\lim_{x\di x_{0}-}f(x)$ and $f(x_{0}+)=\lim_{x\di x_{0}+}f(x)$ exist.  
Scheeffer studies discontinuous regulated functions in \cite{scheeffer} (without using the term `regulated'), while Bourbaki develops Riemann integration based on regulated functions in \cite{boerbakies}.  
Now, $BV$-functions are regulated (see Theorem \ref{flima}), while Weierstrass' `monster' function is a natural example of a regulated function not in $BV$.  
An interesting observation about regular functions and continuity is as follows.
\begin{rem}[Continuity and the Axiom of Choice]\label{atleast}\rm
As discussed in \cite{kohlenbach2}*{\S3}, the \emph{local} equivalence for functions on Baire space between sequential and `epsilon-delta' continuity can be proved in $\RCAo+\QFAC^{0,1}$, but not in $\ZF$.  
By \cite{dagsamXI}*{Theorem 3.32}, this equivalence for \emph{regulated} functions is provable in $\ACAo$.
\end{rem}
%\smallskip
%
%Fifth, a somewhat unexpected result regarding $BV$-functions is the \emph{Banach indicatrix theorem} from \cite{banach1}, implying that for a continuous $BV$-function $f:[a,b]\di \R$, we may compute \eqref{tomb} as follows:
%\be\label{indix}\textstyle
%V_{a}^{b}(f)=\int_{\R} N(f)(y) dy, \textup{ where $N(f)(y)=\# \{x\in [a,b ]: f(x)=y\}$}.
%\ee
%The function $N(f)$ is called the \emph{Banach indicatrix} of $f$ on $[0,1]$.
%By the (Russian language) results in \cite{lovely, lovely2}, \eqref{indix} also holds for regulated functions, as explained in \cite{voordedorst}*{p.\ 44} in English.
Finally, the {Jordan decomposition theorem} as in Theorem \ref{drd} shows that a $BV$-function can be `decomposed' as the difference of monotone functions. 
This is however not the only result of its kind: Sierpi\'{n}ski e.g.\ establishes in \cite{voordesier} that for regulated $f:[0,1]\di \R$, there are $g, h$ such that $f=g\circ h$ with $g$ continuous and $h$ strictly increasing on their respective domains.

\section{Computability theory}
\subsection{Introduction: computational properties of the uncountability of $\R$}
The uncountability of $\R$ can be studied in numerous guises in higher-order computability theory.  
For instance, the following notions were introduced in \cite{dagsamX, dagsamXII}, where it is also shown that many extremely basic operations compute these realisers.  
\bdefi[Realisers for the uncountability of $\R$]\label{kefi}~
\begin{itemize}
\item A \emph{Cantor functional/realiser} takes as input $A\subset [0,1]$ and $Y:[0,1]\di \N$ such that $Y$ is injective on $A$, and outputs $x\not \in A$.  
\item A \emph{\textbf{weak} Cantor realiser} takes as input $A\subset [0,1]$ and $Y:[0,1]\di \N$ such that $Y$ is \textbf{bijective} on $A$, and outputs $x\not \in A$.  
\item A $\NIN$-\emph{realiser} takes as input $Y:[0,1]\di \N$ and outputs $x,y\in [0,1]$ with $x\ne y \wedge Y(x)=Y(y)$.  
\end{itemize}
\edefi
Clearly, the realisers in Definition \ref{kefi} are based on the `set theoretic' definition of countability.  As discussed in Remark \ref{diunk}, the notion of weak countability (Definition \ref{hoogzalieleven}) seems to be have better, suggesting 
the following stronger notions. 
\begin{defi}[A strong Cantor realiser]
A \emph{strong Cantor realiser} is any functional that on input a sequence $(X_{n})_{n\in \N}$ of finite sets in $[0,1]$, outputs $y\in [0,1]\setminus \cup_{n\in \N}X_{n}$. 
\end{defi}
The following definition is motivated by the observation that \eqref{lagel2} in Remark~\ref{diunk} comes with an explicit upper bound (namely $2^{n}V_{0}^{1}(f)$), while for regulated functions, all we can say is that \eqref{lagel2} is finite (Def.\ \ref{deadd}).
\begin{defi}[An intermediate Cantor realiser]
An \emph{intermediate Cantor realiser} is any functional that on input a sequence $(X_{n})_{n\in \N}$ of finite sets in $[0,1]$ and $g:\N\di \N$ such that $(\forall n\in \N)( |X_{n}|\leq g(n))$, outputs $y\in [0,1]\setminus \cup_{n\in \N}X_{n}$. 
\end{defi}
The reader will observe that an intemediate Cantor realiser computes a normal one.  Any Cantor realiser expresses the extremely basic idea that removing a countable set from an uncountable one, there is at least one element left.  

\smallskip

We show in Section \ref{birf} that many operations on regulated functions are computationally equivalent to strong Cantor realisers.  
We show in Section \ref{birf2} that many operations on $BV$-functions are computationally equivalent to intermediate Cantor realisers.  
We discuss related functionals and formulate some conjectures.  

\subsection{Strong Cantor realisers}\label{birf}
In this section, we show that many operations on regulated functions are computationally equivalent to strong Cantor realisers.  

\smallskip

First of all, various natural functionals compute a strong Cantor realiser.
\begin{thm}\label{kifkif}
The following functionals compute a strong Cantor realiser:
\begin{itemize}
\item the structure functional $\Omega$ from \cite{dagsamXIII} that enumerates finite sets,
\item the non-monotone induction functional \(see \cite{dagsamVII, dagcomp20}\),
\item a realiser for the Baire category theorem \(see \cite{dagsamVII}*{\S6}\).
\end{itemize}
\end{thm}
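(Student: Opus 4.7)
The plan is to establish, for each of the three functionals, how to convert a sequence $(X_{n})_{n\in \N}$ of finite sets in $[0,1]$ (in the sense of Definition \ref{deadd}) together with an assumed point in $[0,1]\setminus \cup_{n\in\N}X_{n}$ into an explicit such point. The first item does the heavy lifting and the other two are reduced to it.

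I would first treat the structure functional $\Omega$. By hypothesis $\Omega$ enumerates any finite set of reals as a finite sequence without repetitions, and in particular computes $\exists^{2}$. Given $(X_{n})_{n\in \N}$, I apply $\Omega$ to each $X_{n}$ to produce a finite sequence $w_{n}\in \R^{<\N}$ listing the elements of $X_{n}$, and concatenate these via the usual pairing $\N\times\N\to \N$ to obtain a single sequence $(z_{k})_{k\in \N}$ with $\{z_{k}:k\in \N\}=\cup_{n\in \N}X_{n}$. I then carry out Cantor's classical diagonal construction inside $[0,1]$: using $\mu^{2}$ (which $\Omega$ computes) to extract the ternary expansion of each $z_{k}$, I define $y\in [0,1]$ digit by digit so that its $k$-th ternary digit differs from that of $z_{k}$ and lies in $\{0,1\}$, thus avoiding the usual ambiguity of endpoints of intervals. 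This yields $y\in [0,1]\setminus \cup_{n\in \N}X_{n}$, i.e.\ a strong Cantor realiser.

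For the non-monotone induction functional, the cited works \cite{dagsamVII, dagcomp20} establish that it computes $\Omega$ (more precisely, it enumerates finite sets of reals, since the property of being a member of a finite set is arithmetical and thus within its scope). Hence the previous paragraph applies to produce $y\in [0,1]\setminus \cup_{n\in \N}X_{n}$. For the BCT realiser (as formulated in \cite{dagsamVII}*{\S6}), the idea is dual: given $(X_{n})_{n\in \N}$, define $O_{n}:=[0,1]\setminus X_{n}$, which is open because $X_{n}$ is finite (here $\exists^{2}$ suffices, since `$x\in X_{n}$' is decidable once a size bound is known and the elements are compared as reals). Moreover each $O_{n}$ is dense in $[0,1]$ as no finite set contains an open interval. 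Feeding the sequence $(O_{n})_{n\in \N}$ to the BCT realiser produces a point $y\in \bigcap_{n\in \N}O_{n}=[0,1]\setminus \bigcup_{n\in \N}X_{n}$, which is exactly what a strong Cantor realiser must output.

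The main obstacle is a presentation issue rather than a conceptual one: both the Cantor diagonalisation step and the BCT realiser expect their input in a usable format (digits of reals, respectively codes of open dense sets), and a `bare' finite set as in Definition \ref{deadd} supplies neither \emph{a priori}. The trick is that in the first case $\Omega$ \emph{itself} produces the required enumeration, in the second case the cited literature already shows non-monotone induction subsumes $\Omega$, and in the third case the BCT realiser is strong enough to operate on open sets presented by third-order characteristic functions, so no intermediate enumeration of $X_{n}$ is necessary. Modulo checking that the notion of `BCT realiser' from \cite{dagsamVII}*{\S6} accepts open sets given as in Definition \ref{char} (rather than via second-order RM-codes), the argument is uniform and each proof is effective in the input data and the relevant functional.
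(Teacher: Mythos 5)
Your treatment of the first and third items matches the paper's proof: for $\Omega$ one concatenates the enumerations $\Omega(X_{0})*\Omega(X_{1})*\dots$ of the individual $X_{n}$ and then diagonalises against the resulting sequence (the paper cites \cite{simpson2}*{II.4.9}; note that the interval-nesting form of that argument is effective in the enumeration alone, so your detour through ternary digits and $\mu^{2}$ is unnecessary, though harmless), and for the BCT realiser one feeds it the dense open sets $[0,1]\setminus X_{n}$ (the paper uses $[0,1]\setminus\cup_{k\leq n}X_{k}$, which makes no difference for the intersection).

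The gap is in the second item. You assert that non-monotone induction computes $\Omega$, ``since the property of being a member of a finite set is arithmetical and thus within its scope''. This conflates \emph{deciding} membership (trivial, since $X_{n}$ is given by its characteristic function) with \emph{producing real-number witnesses}: to enumerate a finite $X\subset[0,1]$ in the sense of Definition \ref{deadd} one must output the actual reals belonging to $X$, and that is not an inductive definition over $\N$. Remark \ref{dichtbij} of the paper records precisely that such enumerations are not computable from any $\SS_{m}^{2}$ together with the data, and the references \cite{dagsamVII, dagcomp20} do not establish that non-monotone induction computes $\Omega$. The supported reduction runs in the opposite direction from the one you chose: by \cite{dagsamVII}*{Theorem 6.5}, non-monotone induction computes a realiser for the Baire category theorem, so the second item follows from the \emph{third}, not from the first. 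Since your BCT argument is exactly what is needed there, the repair is simply to reroute that reduction.
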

\begin{proof}
Let $(X_{n})_{n\in \N}$ be a sequence of finite sets in $[0,1]$.  
Since $\Omega$ can enumerate finite sets, the sequence $\Omega(X_{0})*\Omega(X_{1})*\dots$ enumerates $\cup_{n\in \N}X_{n}$.  
One readily computes $y\in [0,1]$ not in this sequence (see e.g.\ \cite{simpson2}*{II.4.9} or \cite{grayk}), yielding a strong Cantor realiser.  

\smallskip

The second item computes the third item by \cite{dagsamVII}*{Theorem 6.5}.  Regarding the third item, $O_{n}:= [0,1]\setminus \cup_{k\leq n}X_{k}$ is (trivially) dense and open.  
Clearly any $y \in \cap_{n\in \N}O_{n}$ by definition satisfies $y\in [0,1]\setminus \cup_{n\in \N}X_{n}$, i.e.\ a realiser for the Baire category theorem computes a strong Cantor realiser. 
\end{proof}
%we note that functional $\Omega$ from \cite{dagsamXIII} can enumerate finite sets, and therefore compute a strong Cantor realiser. 
%Similarly,  readly computes
%
%\smallskip

Secondly, we have the following theorem. 
Regarding item \eqref{full6}, the property `$x$ is a local strict maximum of $f$' does not seem to be decidable given $\exists^{2}$ for (general) regulated $f$, as opposed to `$f$ is continuous at $x$', which is equivalent to $f(x+)\ne f(x-)$.
We also note that items \eqref{full2} and \eqref{full25} are based on Volterra's Theorem \ref{VOL} and Corollary \ref{VOLcor}.
\begin{thm}\label{flunk}
The following are computationally equivalent modulo $\exists^{2}$.
\begin{enumerate}
\renewcommand{\theenumi}{\alph{enumi}}
\item A functional that on input regulated $f:[0,1]\di \R$, outputs $a, b\in  [0,1]$ such that $\{ x\in [0,1]:f(a)\leq f(x)\leq f(b)\}$ is infinite.\label{full0}
\item A functional that on input regulated $f:[0,1]\di \R$, outputs $y\in  [0,1]$ where $f$ is continuous \(or {quasi}- or lower semi-continuous\).\label{full1}
\item A `density' functional that on input regulated $f:[0,1]\di \R$, $x\in [0,1]$, and $k\in \N$, outputs $y\in  [0,1]\cap B(x, \frac{1}{2^{k}})$ such that $f$ is continuous at $y$. \label{full1.5}
\item A functional that on input regulated $f:[0,1]\di \R$, outputs either $q\in \Q\cap [0,1]$ where $f$ is discontinuous, or $x\in [0,1]\setminus \Q$ where $f$ is continuous. \label{full2}
\item A functional that on input regulated $f,g:[0,1]\di \R$, outputs a real $x\in [0,1]$ such that $f$ and $g$ are both continuous or discontinuous at $x$. \label{full25}
\item A strong Cantor realiser.\label{full3}
\item A functional that on input regulated $f:[0,1]\di [0,1]$ with Riemann integral $\int_{0}^{1}f(x)dx=0$, outputs $x\in [0,1]$ with $f(x)=0$. \label{full4}
%\item A functional that on input $f:[0,1]\di (0, 1]$ of bounded variation, outputs $N\in \N, x\in [0,1]$ such that $(\forall y\in B(x, \frac{1}{2^{N}}))(f(y)\geq \frac{1}{2^{N}})$.\label{full5}
\item A functional that on input regulated $f:[0,1]\di \R$, outputs $x_{0}\in (0,1)$ where $F(x):=\int_{0}^{x}f(t)dt$ is differentiable with derivative $f(x_{0})$.\label{full5}
%\item A functional that on input $A\subset \R$ with no limit points, outputs $y\in \R\setminus A$.\label{full6}
\item A functional that on input regulated $f:[0,1]\di \R$ with only removable discontinuities, outputs $x\in [0,1]$ which is \textbf{not} a strict\footnote{A point $x\in [0,1]$ is a strict local maximum of $f:[0,1]\di \R$ in case $(\exists N\in \N)( \forall y \in B(x, \frac{1}{2^{N}}))(y\ne x\di f(y)<f(x))$.} local maximum. \label{full6}
\end{enumerate}
\end{thm}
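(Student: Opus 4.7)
My plan is to take item~\eqref{full3} (the strong Cantor realiser) as the central hub and establish each equivalence by two computations: a ``forward'' reduction from \eqref{full3} to the item, and a ``backward'' reduction from the item back to \eqref{full3}. The backward direction in every case will be carried out by applying the item to the regulated test function $h\colon [0,1]\to\mathbb{R}$ defined in \eqref{mopi} from an arbitrary sequence $(X_n)_{n\in\mathbb{N}}$ of finite sets; any witness produced (a continuity point, a Darboux point, a zero, a point of differentiability of $F$, etc.) must automatically lie outside $\cup_n X_n$, yielding the realiser. That $h$ is regulated, with only removable discontinuities, is uniformly computable in $\exists^2$ together with the enumeration functionality provided by $\mathsf{FIN}$, which is built into the assumed data in the proof of Theorem~\ref{duck}; I shall reuse those arguments verbatim.

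For the forward direction, given regulated $f\colon[0,1]\to\mathbb{R}$, I compute (using $\exists^2$ and the fact that left/right limits of regulated functions are arithmetical) the finite sets $X_n$ from \eqref{sameold} together with a size/width bound derived from the discontinuity structure of $f$, apply the strong Cantor realiser to $(X_n)_{n\in\mathbb{N}}$, and return the resulting $y\in[0,1]\setminus\cup_n X_n$, which is a continuity point of $f$ (hence a Darboux point, a quasi-continuity and lsco point, and -- when combined with $\int_0^1 f=0$ -- a zero of $f$, and a point of differentiability of $F(x)=\int_0^x f$ with value $f(y)$). This handles items~\eqref{full1}, \eqref{full4}, \eqref{full5} and the regulated-function side of \eqref{full0}. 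For \eqref{full2} I perform the rational case-split from Theorem~\ref{flonk}: test each $q\in\mathbb{Q}\cap[0,1]$ for discontinuity using $\exists^2$; if some $q$ witnesses discontinuity, output it, otherwise apply the basic continuity-point functional to $f$ and note that the output must be irrational. For \eqref{full25} I use the union $X_n^f\cup X_n^g$ and apply the realiser to obtain a point where both $f$ and $g$ are continuous. For the density variant \eqref{full1.5}, I rescale $f$ to the subinterval $B(x,\tfrac{1}{2^k})\cap[0,1]$ (the rescaling is uniform and available from $\exists^2$, as noted in Remark~\ref{kimu}) and apply the basic functional.

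The main obstacle is item~\eqref{full6}: the predicate ``$x$ is a strict local maximum of $f$'' is not obviously decidable in $\exists^2$ for regulated $f$, so the finite sets one wishes to exclude are not arithmetical on their face. The remedy follows the bookkeeping in the proof of Theorem~\ref{duck}\eqref{pon19}: assuming only removable discontinuities, define $\tilde f(x):=f(x+)$, which is continuous on $[0,1]$ and admits a continuous modulus of continuity computable from $\exists^2$; then ``$x$ is a strict local maximum of $\tilde f$'' is $\exists^2$-decidable (Footnote~\ref{roofer}), and ``$x$ is a strict local maximum of $f$'' reduces to the decidable condition $f(x)>\max\{f(x+),f(x-)\}$ together with a witness $N_{f,x}$ provided by $\mu^2$. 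This lets one define, uniformly in $\exists^2$, the arithmetical sets $A_n$ of strict local maxima with index bound $n$; each is finite since strict local maxima cannot accumulate. Applying the strong Cantor realiser to $(A_n)_{n\in\mathbb{N}}$ then yields a point that is not a strict local maximum. The backward direction for \eqref{full6} is direct: every strict local maximum of $h$ lies in $\cup_n X_n$, so any non-maximum point is a Cantor witness.

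Finally, to close the cycle with item~\eqref{full0}, I note that any continuity point $y$ of regulated $f$ yields, by choosing $a=b=y$ and invoking continuity at $y$, an interval on which $f$ stays in $[f(y)-\varepsilon,f(y)+\varepsilon]$; after perturbing $a,b$ slightly within a subinterval (using the density functional \eqref{full1.5}) one obtains $a,b$ with $f(a)\leq f(x)\leq f(b)$ on an interval, which is automatically infinite. Conversely, applied to $h$ from \eqref{mopi}, any infinite set $\{x:h(a)\leq h(x)\leq h(b)\}$ must contain points where $h$ vanishes (otherwise the set would meet $\cup_n X_n$, and by $\mathsf{FIN}$ only finitely many $X_n$-values lie in any interval $[h(a),h(b)]$ bounded away from $0$), delivering the realiser. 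This closes the cycle and, together with Theorem~\ref{kifkif} locating \eqref{full3} among standard computability-theoretic functionals, completes the equivalence modulo $\exists^2$.
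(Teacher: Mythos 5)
Your proposal follows the same architecture as the paper's proof: item \eqref{full3} is the hub, every backward reduction is obtained by applying the given functional to the test function $h$ from \eqref{mopi}, every forward reduction goes through the finite sets $X_{n}$ from \eqref{sameold}, and the treatment of item \eqref{full6} via $\tilde f(x):=f(x+)$, the maximum functional, and the sets $A_{n}$ of strict local maxima with index bound $n$ is exactly the paper's.

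One step fails as written, namely your forward reduction to item \eqref{full2}. In the case where $f$ is continuous at every rational you ``apply the basic continuity-point functional to $f$ and note that the output must be irrational''. That is false: the functional of item \eqref{full1} only promises \emph{some} continuity point, and for $f\equiv 0$ it may output $\frac12$, which satisfies neither disjunct of \eqref{full2} (it is not a rational discontinuity point, nor an irrational continuity point). The repair is the paper's move, which you already have the tools for: apply the strong Cantor realiser not to $(X_{n})_{n\in\N}$ but to $Y_{n}:=X_{n}\cup\{q_{n}\}$ for an enumeration $(q_{n})_{n\in\N}$ of $\Q\cap[0,1]$, so that the output is simultaneously a continuity point of $f$ and irrational. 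Two smaller remarks. First, for item \eqref{full25} the blanket recipe ``apply the item to $h$'' needs a second argument; the pair $(h,\mathbf{0})$ works (the zero function is everywhere continuous, forcing the output to be a continuity point of $h$), whereas the paper instead pairs $f$ with Thomae's function to recover item \eqref{full2}. Second, your derivation of item \eqref{full0} from a continuity point $y$ asserts $f(a)\leq f(x)\leq f(b)$ \emph{on an interval}, which is stronger than needed and not generally achievable for merely regulated $f$; it suffices to work along a sequence $y_{n}\to y$ and use $\exists^{2}$ to decide whether infinitely many $f(y_{n})$ equal, exceed, or fall below $f(y)$, which yields $a,b$ with $\{x:f(a)\leq f(x)\leq f(b)\}$ infinite. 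The paper is equally terse at this point, but the interval claim itself should not be relied upon.
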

\begin{proof}
First of all, regarding item \eqref{full0}, consider the regulated function $h$ from \eqref{modi3}. 
In case $h(a)>0$, then $\{ x\in [0,1]:h(a)\leq h(x)\leq h(b)\}$ is finite by the definition of $h$.  In case $h(a)=0$, \eqref{modi3} implies $a\in [0,1]\setminus \cup_{n\in \N}X_{n}$, as required for a strong Cantor realiser. 
Now consider item \eqref{full1} and note that if $f:[0,1]\di \R$ is continuous at $y\in [0,1]$, we can use $\mu^{2}$ to find $N\in \N$ such that $(\forall q\in B(y, \frac{1}{2^{N}})\cap \Q\cap [0,1]  )(|f(y)-f(q)|<\frac{1}{2}) $, which readily 
yields $a, b\in [0,1]$ as required by item \eqref{full0}.
Below, we prove that item \eqref{full3} implies item \eqref{full1}.

\smallskip

Secondly, The proof of the theorem, in particular that $\QRQ_{\reg}$ implies item \eqref{on6} of the theorem, yields the implication \eqref{full2}$\di$\eqref{full1} in the corollary. 
Similarly, the proof of the theorem, in particular that item \eqref{on6} of the theorem implies $\NIN_{\alt}$, yields the implication \eqref{full1}$\di$\eqref{full3} in the corollary. 
The same proof goes through for continuity replaced by quasi-continuity or regulated. 

\smallskip

Thirdly, item \eqref{full1.5} immediately yields item \eqref{full1}, while the reversal readily follows by rescaling. 

\smallskip

Fourth, assume item \eqref{full3} from the corollary is given and fix regulated $f:[0,1]\di \R$.   Note that $\mu^{2}$ can find $q\in [0,1]\cap \Q$ with $f(q+)\ne f(q-)$, if such rational exists.   
In case $f$ is continuous at all $q\in [0,1]\cap \Q$, consider the finite set $X_{n}$ from \eqref{sameold} and let $(q_{m})_{m\in \N}$ be an enumeration of $\Q\cap [0,1]$. 
Now use the strong Cantor realiser to find $y\in [0,1]\setminus \big(   \cup_{n\in \N} Y_{n}  \big)$, where $Y_{n}=X_{n}\cup \{q_{n}\}$.
Then $f$ is continuous at $y \in [0,1]\setminus \Q$ and item \eqref{full2} from the corollary follows. 

\smallskip

Fifth, assume item \eqref{full4}, let $(X_{n})_{n\in \N}$ be a sequence of finite sets in $[0,1]$, and let $h$ be as in \eqref{modi3}. 
Since $h$ is regulated, it is Riemann integrable, and of course has integral $0$.  Any $y\in [0,1]$ such that $h(y)=0$ is by definition not in $\cup_{n\in \N}X_{n}$, yielding a strong Cantor realiser. 
Item \eqref{full1} readily yields item \eqref{full4} as follows: \emph{if} regulated $f:[0,1]\di [0,1]$ has $\int_{0}^{1}f(x)dx=0$ and is continuous at $y\in [0,1]$, \emph{then} we must have $f(y)=0$;
indeed, in case $f(y)>0$, the continuity of $f$ at $y$ implies that $f(z)>\frac{1}{2^{k}}$ for $z\in B(y,\frac{1}{2^{k}} )$ for some $k\in \N$, implying $\int_{0}^{1}f(x)dx > 0$.

\smallskip

Sixth, the equivalence between items \eqref{full1} and \eqref{full5} is immediate in light of the (first part of the) fundamental theorem of calculus. 

\smallskip

Seventh, consider item \eqref{full6} and note that $h$ as in \eqref{modi3} is regulated with only removable discontinuities.  
Now, the set $X=\cup_{n\in \N}X_{n}$ consists of the local strict maxima of $h$, i.e.\ item \eqref{full6} yields a strong Cantor realiser. 
For the reversal, $\exists^{2}$ computes a functional $M$ such that $M(g, a, b)$ is a maximum of $g\in C([0,1])$ on $[a,b]\subset [0,1]$ (see \cite{kohlenbach2}*{\S3}), i.e.\ $(\forall y\in [a,b])(g(y)\leq g(M(g, a,b)))$.
Using the functional $M$, one readily shows that `$x$ is a strict local maximum of $g$' is decidable\footnote{If $g\in C([0,1])$, then $x\in [0,1]$ is a local maximum iff for some (rational) $\epsilon > 0$ we have that 
\begin{itemize}
\item $g(y) < g(x)$ whenever $|x-y] < \epsilon$ for any $q\in [0,1]\cap\Q $, and: 
\item $\sup_{y\in [a,b]}g(y) < g(x)$ whenever $x \not \in [a,b]$, $a,b\in \Q$ and $[a,b] \subset [x - \epsilon,x + \epsilon]$.
\end{itemize}.
Note that $\mu^{2}$ readily yields $N\in \N$ such that $(\forall y\in B(x, \frac{1}{2^{N}}))( g(y)<g(x))$.\label{roofer}
} given $\exists^{2}$, for $g$ continuous on $[0,1]$.  
Now let $f:[0,1]\di \R$ be regulated and with only removable discontinuities.  
Use $\exists^{2}$ to define $\tilde{f}:[0,1]\di \R$ as follows: $\tilde{f}(x):= f(x+)$ for $x\in [0, 1)$ and $\tilde{f}(1)=f(1-)$.
By definition, $\tilde{f}$ is continuous on $[0,1]$, and $\exists^{2}$ computes a (continuous) modulus of continuity, which follows in the same way as for Baire space (see e.g.\ \cite{kohlenbach4}*{\S4}).
%We now show that for such a function, the property `$x$ is a strict local maximum of $f$' is also decidable given $\exists^{2}$.  
In case $f$ is discontinuous at $x\in [0,1]$, the latter point is a strict local maximum of $f$ if and only if $f(x)>f(x+)$ (or $f(x)>f(x-)$ in case $x=1$).  
Note that $\mu^{2}$ (together with a modulus of continuity for $\tilde{f}$) readily yields $N_{f, x}\in \N$ such that $(\forall y\in B(x, \frac{1}{2^{N_{f,x}}}))( f(y)<f(x))$, in case $x$ is a strict local maximum of $f$.
In case $f$ is continuous at $x\in [0,1]$, we can use $\exists^{2}$ to decide whether $x$ is a local strict maximum of $\tilde{f}$.  
By Footnote \ref{roofer}, $\mu^{2}$ again yields $N_{f, x}\in \N$ such that $(\forall y\in B(x, \frac{1}{2^{N}}))( \tilde{f}(y)<\tilde{f}(x)))$, in case $x$ is a strict local maximum of $\tilde{f}$.
Now consider the following set:
\[\textstyle
A_{n}:=\{x\in [0,1]:  \textup{$x$ is a local strict maximum of $\tilde{f}$ or $f$ with $n\geq N_{{f}, x}\in \N$}\}.
\]
Then $A_{n}$ has at most $2^{n+2}$ elements as strict local maxima cannot be `too close'.  
Hence, a strong Cantor realiser yields $y \in [0,1]\setminus \cup_{n\in \N}A_{n}$, which is not a local maximum of $f$, i.e.\ item \eqref{full6} follows. 

\smallskip

Finally, consider item \eqref{full25} and note that Thomae's function from \eqref{thomae} is regulated as the left and right limits are $0$.  Hence, we obtain item \eqref{full2}, as Thomae's function is continuous on $\R\setminus \Q$ and discontinuous on $\Q$.  To obtain item \eqref{full25} from a strong Cantor realiser, let $f, g:[0,1]$ be regulated and consider the finite set $X_{n}$ as in \eqref{sameold}.
Let $Y_{n}$ be the same set for $g$. Then use a strong Cantor realiser to obtain $y\in [0,1]\setminus \big(\cup_{n\in \N}(X_{n}\cup Z_{n})\big)$.  By definition, $f$ and $g$ are continuous at $y$.
%\smallskip
%
%The equivalence between items \eqref{full3} and \eqref{full6} follows by noting that $A_{n}:= A\cap [-n,n]$ is finite in case $A$ has no limit points.  
%Hence, a Cantor realiser readily yields item \eqref{full6}.  For the other direction, let $(X_{n})_{n\in \N}$ be a sequence of finite sets in $[0,1]$.  
%Define $A\subset[0,1]$ as the set obtained by translating $\cup_{k\leq n}X_{k}$ to $[n+1, n+2]$ for all $n\in \N$.  Clearly, $A$ has no limit points and consider $z\in \R\setminus A$. 
%Then $z\in [n_{0}+1, n_{0}+2]$ for some $n_{0}\in \N$, and $y=z-n_{0}$ is in $[0,1]\setminus \cup_{n\in \N}X_{n}$. 
%Indeed, in case $y\in X_{m_{0}}$, then $y+m \in A$ for $m\geq m_{0}$.  (needs work)
\end{proof}
Regarding item \eqref{full6}, one readily shows that the following functional is computationally equivalent to $\Omega$ from Theorem \ref{kifkif}, also for the restriction to $BV$-functions.  
Note that the set of strict local maxima is countable for \emph{any} $\R\di \R$-function by \cite{saks}*{p.\ 261, Theorem (1.1)}.
\begin{itemize}
\item A functional that on input regulated $f:[0,1]\di \R$ with only removable discontinuities, outputs an enumeration of all strict local maxima. 
\end{itemize}
In conclusion, we have established a number of equivalences for strong Cantor realisers, while many more variations are possible. 

\subsection{Intermediate Cantor realisers}\label{birf2}

\section{Reverse Mathematics}

\subsection{Some results}
We establish the results sketched in Section \ref{vintro}
%The Baire category theorem remains somewhat mysterious, as well as an outlier in our study of the $\Theta$ and $\Omega$-families.  
%We intend to fix that.  

\smallskip

Add the commented out part from Gauld's paper.

\smallskip

While a $BV$-function is differentiable a.e.\ (though a regular function may be nowhere differentiable), we do not seem to get equivalences like for continuity as in Theorem \ref{duck}.

\smallskip

\smallskip

The following may be omitted.
\begin{rem}\rm
Why can we expect regulated functions and properties of finite sets to be connected?  This connection can be explained by \cite{gaffe2}*{p.\ 63}: the authors define the `saltus' function $f^{*}$ as $f^{*}(x):= f(x+)-f(x-)$ for regulated $f:[a,b]\di \R$.  
Then $\{ x\in [a,b] : |f^{*}(x)|>\eps  \}$ is finite for $\eps>0$ \emph{and} this is the essential property of the saltus function of a regulated function, by the following theorem
\begin{center} 
\emph{If $h:[a,b]\di \R$ is such that $\{x \in [a,b]: |h(x)| > 1/2^{n}\}$ is finite for all $n\in \N$ and $h(a) = h(b) = 0$, then there is regulated $f$ such that $f^{*}= h$ \(\cite{gaffe2}*{Theorem 2}\).}
\end{center}
\end{rem}

\appendix

\bye
\appendix

\section{Background}\label{prelimA}
We introduce \emph{Reverse Mathematics} in Section \ref{prelim1}, as well as Kohlenbach's generalisation to \emph{higher-order arithmetic}, and the associated base theory $\RCAo$.  % in Section \ref{KOH}.  
We introduce higher-order \emph{computability theory}, following Kleene's computation schemes S1-S9, in Section \ref{HCT}. 
%Based on this framework, we obtain some model constructions (Section \ref{models}) that are essential to our independence results in Section \ref{main}.  

\subsection{Reverse Mathematics}\label{prelim1}
We provide an introduction to Reverse Mathematics (Section \ref{introrm}) and introduce Kohlenbach's base theory of \emph{higher-order} Reverse Mathematics (Section \ref{rmbt}).
Some essential axioms, functionals, and notations may be found in Sections \ref{kkk} and \ref{lll}.
\subsubsection{Introduction}\label{introrm}
Reverse Mathematics (RM hereafter) is a program in the foundations of mathematics initiated around 1975 by Friedman (\cites{fried,fried2}) and developed extensively by Simpson (\cite{simpson2}).  
The aim of RM is to identify the minimal axioms needed to prove theorems of ordinary, i.e.\ non-set theoretical, mathematics. 

\smallskip

We refer to \cite{stillebron} for a basic introduction to RM and to \cite{simpson2, simpson1} for an overview of RM.  We expect basic familiarity with RM, but do sketch some aspects of Kohlenbach's \emph{higher-order} RM (\cite{kohlenbach2}) essential to this paper, including the base theory $\RCAo$ (Definition \ref{kase}).  

\smallskip

First of all, in contrast to `classical' RM based on \emph{second-order arithmetic} $\Z_{2}$, higher-order RM uses $\L_{\omega}$, the richer language of \emph{higher-order arithmetic}.  
Indeed, while the former is restricted to natural numbers and sets of natural numbers, higher-order arithmetic can accommodate sets of sets of natural numbers, sets of sets of sets of natural numbers, et cetera.  
To formalise this idea, we introduce the collection of \emph{all finite types} $\mathbf{T}$, defined by the two clauses:
\begin{center}
(i) $0\in \mathbf{T}$   and   (ii)  If $\sigma, \tau\in \mathbf{T}$ then $( \sigma \di \tau) \in \mathbf{T}$,
\end{center}
where $0$ is the type of natural numbers, and $\sigma\di \tau$ is the type of mappings from objects of type $\sigma$ to objects of type $\tau$.
In this way, $1\equiv 0\di 0$ is the type of functions from numbers to numbers, and  $n+1\equiv n\di 0$.  Viewing sets as given by characteristic functions, we note that $\Z_{2}$ only includes objects of type $0$ and $1$.    

\smallskip

Secondly, the language $\L_{\omega}$ includes variables $x^{\rho}, y^{\rho}, z^{\rho},\dots$ of any finite type $\rho\in \mathbf{T}$.  Types may be omitted when they can be inferred from context.  
The constants of $\L_{\omega}$ include the type $0$ objects $0, 1$ and $ <_{0}, +_{0}, \times_{0},=_{0}$  which are intended to have their usual meaning as operations on $\N$.
Equality at higher types is defined in terms of `$=_{0}$' as follows: for any objects $x^{\tau}, y^{\tau}$, we have
\be\label{aparth}
[x=_{\tau}y] \equiv (\forall z_{1}^{\tau_{1}}\dots z_{k}^{\tau_{k}})[xz_{1}\dots z_{k}=_{0}yz_{1}\dots z_{k}],
\ee
if the type $\tau$ is composed as $\tau\equiv(\tau_{1}\di \dots\di \tau_{k}\di 0)$.  
Furthermore, $\L_{\omega}$ also includes the \emph{recursor constant} $\mathbf{R}_{\sigma}$ for any $\sigma\in \mathbf{T}$, which allows for iteration on type $\sigma$-objects as in the special case \eqref{special}.  Formulas and terms are defined as usual.  
One obtains the sub-language $\L_{n+2}$ by restricting the above type formation rule to produce only type $n+1$ objects (and related types of similar complexity).        

\subsubsection{The base theory of higher-order Reverse Mathematics}\label{rmbt}
We introduce Kohlenbach's base theory $\RCAo$, first introduced in \cite{kohlenbach2}*{\S2}.
\bdefi\label{kase} 
The base theory $\RCAo$ consists of the following axioms.
\begin{enumerate}
 \renewcommand{\theenumi}{\alph{enumi}}
\item  Basic axioms expressing that $0, 1, <_{0}, +_{0}, \times_{0}$ form an ordered semi-ring with equality $=_{0}$.
\item Basic axioms defining the well-known $\Pi$ and $\Sigma$ combinators (aka $K$ and $S$ in \cite{avi2}), which allow for the definition of \emph{$\lambda$-abstraction}. 
\item The defining axiom of the recursor constant $\mathbf{R}_{0}$: for $m^{0}$ and $f^{1}$: 
\be\label{special}
\mathbf{R}_{0}(f, m, 0):= m \textup{ and } \mathbf{R}_{0}(f, m, n+1):= f(n, \mathbf{R}_{0}(f, m, n)).
\ee
\item The \emph{axiom of extensionality}: for all $\rho, \tau\in \mathbf{T}$, we have:
\be\label{EXT}\tag{$\textsf{\textup{E}}_{\rho, \tau}$}  
(\forall  x^{\rho},y^{\rho}, \varphi^{\rho\di \tau}) \big[x=_{\rho} y \di \varphi(x)=_{\tau}\varphi(y)   \big].
\ee 
\item The induction axiom for quantifier-free formulas of $\L_{\omega}$.
\item $\QFAC^{1,0}$: the quantifier-free Axiom of Choice as in Definition \ref{QFAC}.
\end{enumerate}
\edefi
\noindent
Note that variables (of any finite type) are allowed in quantifier-free formulas of the language $\L_{\omega}$: only quantifiers are banned.
Recursion as in \eqref{special} is called \emph{primitive recursion}; the class of functionals obtained from $\mathbf{R}_{\rho}$ for all $\rho \in \mathbf{T}$ is called \emph{G\"odel's system $T$} of all (higher-order) primitive recursive functionals. 
%Moreover, we let $\INDD^{\omega}$ be the induction axiom for all formulas in $\L_{\omega}$.
\bdefi\label{QFAC} The axiom $\QFAC$ consists of the following for all $\sigma, \tau \in \textbf{T}$:
\be\tag{$\QFAC^{\sigma,\tau}$}
(\forall x^{\sigma})(\exists y^{\tau})A(x, y)\di (\exists Y^{\sigma\di \tau})(\forall x^{\sigma})A(x, Y(x)),
\ee
for any quantifier-free formula $A$ in the language of $\L_{\omega}$.
\edefi
As discussed in \cite{kohlenbach2}*{\S2}, $\RCAo$ and $\RCA_{0}$ prove the same sentences `up to language' as the latter is set-based and the former function-based.   
This conservation results is obtained via the so-called $\ECF$-interpretation, which we now discuss. 
\begin{rem}[The $\ECF$-interpretation]\label{ECF}\rm
The (rather) technical definition of $\ECF$ may be found in \cite{troelstra1}*{p.\ 138, \S2.6}.
Intuitively, the $\ECF$-interpretation $[A]_{\ECF}$ of a formula $A\in \L_{\omega}$ is just $A$ with all variables 
of type two and higher replaced by type one variables ranging over so-called `associates' or `RM-codes' (see \cite{kohlenbach4}*{\S4}); the latter are (countable) representations of continuous functionals.  
%The definition of associate may be found just below Definition \ref{FTP}.
The $\ECF$-interpretation connects $\RCAo$ and $\RCA_{0}$ (see \cite{kohlenbach2}*{Prop.\ 3.1}) in that if $\RCAo$ proves $A$, then $\RCA_{0}$ proves $[A]_{\ECF}$, again `up to language', as $\RCA_{0}$ is 
formulated using sets, and $[A]_{\ECF}$ is formulated using types, i.e.\ using type zero and one objects.  
\end{rem}
In light of the widespread use of codes in RM and the common practise of identifying codes with the objects being coded, it is no exaggeration to refer to $\ECF$ as the \emph{canonical} embedding of higher-order into second-order arithmetic. 

%\smallskip
%
%Finally as noted above, Cantor's theorem is provable in the base theory. 
%\begin{thm}[\cite{simpson2}*{II.4.9}] The following is provable in $\RCA_{0}$.
%For any sequence of real numbers $(x_{n})_{n\in \N}$, there is a real $y$ different from $x_{n}$ for all $n\in \N$.
%\end{thm}

\subsubsection{Notations and the like}\label{kkk}
We introduce the usual notations for common mathematical notions, like real numbers, as also introduced in \cite{kohlenbach2}.  
\begin{defi}[Real numbers and related notions in $\RCAo$]\label{keepintireal}\rm~
\begin{enumerate}
 \renewcommand{\theenumi}{\alph{enumi}}
\item Natural numbers correspond to type zero objects, and we use `$n^{0}$' and `$n\in \N$' interchangeably.  Rational numbers are defined as signed quotients of natural numbers, and `$q\in \Q$' and `$<_{\Q}$' have their usual meaning.    
\item Real numbers are coded by fast-converging Cauchy sequences $q_{(\cdot)}:\N\di \Q$, i.e.\  such that $(\forall n^{0}, i^{0})(|q_{n}-q_{n+i}|<_{\Q} \frac{1}{2^{n}})$.  
We use Kohlenbach's `hat function' from \cite{kohlenbach2}*{p.\ 289} to guarantee that every $q^{1}$ defines a real number.  
\item We write `$x\in \R$' to express that $x^{1}:=(q^{1}_{(\cdot)})$ represents a real as in the previous item and write $[x](k):=q_{k}$ for the $k$-th approximation of $x$.    
\item Two reals $x, y$ represented by $q_{(\cdot)}$ and $r_{(\cdot)}$ are \emph{equal}, denoted $x=_{\R}y$, if $(\forall n^{0})(|q_{n}-r_{n}|\leq {2^{-n+1}})$. Inequality `$<_{\R}$' is defined similarly.  
We sometimes omit the subscript `$\R$' if it is clear from context.           
\item Functions $F:\R\di \R$ are represented by $\Phi^{1\di 1}$ mapping equal reals to equal reals, i.e.\ extensionality as in $(\forall x , y\in \R)(x=_{\R}y\di \Phi(x)=_{\R}\Phi(y))$.\label{EXTEN}
\item The relation `$x\leq_{\tau}y$' is defined as in \eqref{aparth} but with `$\leq_{0}$' instead of `$=_{0}$'.  Binary sequences are denoted `$f^{1}, g^{1}\leq_{1}1$', but also `$f,g\in C$' or `$f, g\in 2^{\N}$'.  Elements of Baire space are given by $f^{1}, g^{1}$, but also denoted `$f, g\in \N^{\N}$'.
\item For a binary sequence $f^{1}$, the associated real in $[0,1]$ is $\r(f):=\sum_{n=0}^{\infty}\frac{f(n)}{2^{n+1}}$.\label{detrippe}
%\item An object $\textbf{Y}^{0\di \rho}$ is called \emph{a sequence of type $\rho$ objects} and also denoted $\textbf{Y}=(Y_{n})_{n\in \N}$ or $\textbf{Y}=\lambda n. Y_{n}$ where $Y_{n}:=\textbf{Y}(n)$ for all $n^{0}$.
\item Sets of type $\rho$ objects $X^{\rho\di 0}, Y^{\rho\di 0}, \dots$ are given by their characteristic functions $F^{\rho\di 0}_{X}\leq_{\rho\di 0}1$, i.e.\ we write `$x\in X$' for $ F_{X}(x)=_{0}1$. \label{koer} 
\end{enumerate}
\end{defi}
For completeness, we list the following notational convention for finite sequences.  
\begin{nota}[Finite sequences]\label{skim}\rm
The type for `finite sequences of objects of type $\rho$' is denoted $\rho^{*}$, which we shall only use for $\rho=0,1$.  
Since the usual coding of pairs of numbers goes through in $\RCAo$, we shall not always distinguish between $0$ and $0^{*}$. 
Similarly, we assume a fixed coding for finite sequences of type $1$ and shall make use of the type `$1^{*}$'.  
In general, we do not always distinguish between `$s^{\rho}$' and `$\langle s^{\rho}\rangle$', where the former is `the object $s$ of type $\rho$', and the latter is `the sequence of type $\rho^{*}$ with only element $s^{\rho}$'.  The empty sequence for the type $\rho^{*}$ is denoted by `$\langle \rangle_{\rho}$', usually with the typing omitted.  

\smallskip

Furthermore, we denote by `$|s|=n$' the length of the finite sequence $s^{\rho^{*}}=\langle s_{0}^{\rho},s_{1}^{\rho},\dots,s_{n-1}^{\rho}\rangle$, where $|\langle\rangle|=0$, i.e.\ the empty sequence has length zero.
For sequences $s^{\rho^{*}}, t^{\rho^{*}}$, we denote by `$s*t$' the concatenation of $s$ and $t$, i.e.\ $(s*t)(i)=s(i)$ for $i<|s|$ and $(s*t)(j)=t(|s|-j)$ for $|s|\leq j< |s|+|t|$. For a sequence $s^{\rho^{*}}$, we define $\overline{s}N:=\langle s(0), s(1), \dots,  s(N-1)\rangle $ for $N^{0}<|s|$.  
For a sequence $\alpha^{0\di \rho}$, we also write $\overline{\alpha}N=\langle \alpha(0), \alpha(1),\dots, \alpha(N-1)\rangle$ for \emph{any} $N^{0}$.  By way of shorthand, 
$(\forall q^{\rho}\in Q^{\rho^{*}})A(q)$ abbreviates $(\forall i^{0}<|Q|)A(Q(i))$, which is (equivalent to) quantifier-free if $A$ is.   
\end{nota}
\subsubsection{Some axioms and functionals}\label{lll}
As noted in Section \ref{intro}, the logical hardness of a theorem is measured via what fragment of the comprehension axiom is needed for a proof.  
For this reason, we introduce some axioms and functionals related to \emph{higher-order comprehension} in this section.

\smallskip

First of all, the following functional is clearly discontinuous at $f=11\dots$; in fact, $(\exists^{2})$ is equivalent to the existence of $F:\R\di\R$ such that $F(x)=1$ if $x>_{\R}0$, and $0$ otherwise (\cite{kohlenbach2}*{\S3}).  This fact shall be repeated often.  
\be\label{muk}\tag{$\exists^{2}$}
(\exists \varphi^{2}\leq_{2}1)(\forall f^{1})\big[(\exists n)(f(n)=0) \asa \varphi(f)=0    \big]. 
\ee
Related to $(\exists^{2})$, the functional $\mu^{2}$ in $(\mu^{2})$ is also called \emph{Feferman's $\mu$} (\cite{avi2}).
\begin{align}\label{mu}\tag{$\mu^{2}$}
(\exists \mu^{2})(\forall f^{1})\big[ (\exists n)(f(n)=0) \di [f(\mu(f))=0&\wedge (\forall i<\mu(f))(f(i)\ne 0) ]\\
& \wedge [ (\forall n)(f(n)\ne0)\di   \mu(f)=0]    \big], \notag
\end{align}
We have $(\exists^{2})\asa (\mu^{2})$ over $\RCAo$ and $\ACAo\equiv\RCAo+(\exists^{2})$ proves the same sentences as $\ACA_{0}$ by \cite{hunterphd}*{Theorem~2.5}. 

\smallskip

Secondly, the functional $\SS^{2}$ in $(\SS^{2})$ is called \emph{the Suslin functional} (\cite{kohlenbach2}).
\be\tag{$\SS^{2}$}
(\exists\SS^{2}\leq_{2}1)(\forall f^{1})\big[  (\exists g^{1})(\forall n^{0})(f(\overline{g}n)=0)\asa \SS(f)=0  \big], 
\ee
The system $\FIVE^{\omega}\equiv \RCAo+(\SS^{2})$ proves the same $\Pi_{3}^{1}$-sentences as $\FIVE$ by \cite{yamayamaharehare}*{Theorem 2.2}.   
By definition, the Suslin functional $\SS^{2}$ can decide whether a $\Sigma_{1}^{1}$-formula as in the left-hand side of $(\SS^{2})$ is true or false.   We similarly define the functional $\SS_{k}^{2}$ which decides the truth or falsity of $\Sigma_{k}^{1}$-formulas from $\L_{2}$; we also define 
the system $\SIXK$ as $\RCAo+(\SS_{k}^{2})$, where  $(\SS_{k}^{2})$ expresses that $\SS_{k}^{2}$ exists.  
We note that the operators $\nu_{n}$ from \cite{boekskeopendoen}*{p.\ 129} are essentially $\SS_{n}^{2}$ strengthened to return a witness (if existant) to the $\Sigma_{n}^{1}$-formula at hand.  %  if it exists. 

\smallskip

\noindent
Thirdly, full second-order arithmetic $\Z_{2}$ is readily derived from $\cup_{k}\SIXK$, or from:
\be\tag{$\exists^{3}$}
(\exists E^{3}\leq_{3}1)(\forall Y^{2})\big[  (\exists f^{1})(Y(f)=0)\asa E(Y)=0  \big], 
\ee
and we therefore define $\Z_{2}^{\Omega}\equiv \RCAo+(\exists^{3})$ and $\Z_{2}^\omega\equiv \cup_{k}\SIXK$, which are conservative over $\Z_{2}$ by \cite{hunterphd}*{Cor.\ 2.6}. 
Despite this close connection, $\Z_{2}^{\omega}$ and $\Z_{2}^{\Omega}$ can behave quite differently, as discussed in e.g.\ \cite{dagsamIII}*{\S2.2}.   
The functional from $(\exists^{3})$ is also called `$\exists^{3}$', and we use the same convention for other functionals.  

\subsection{Higher-order computability theory}\label{HCT}

\begin{bibdiv}
\begin{biblist}
\bibselect{allkeida}
\end{biblist}
\end{bibdiv}

\appendix

\subsection{Reverse Mathematics}\label{prelim1}
We discuss Reverse Mathematics (Section \ref{introrm}) and introduce -in full detail- Kohlenbach's base theory of \emph{higher-order} Reverse Mathematics (Section \ref{rmbt}).
Some essential axioms, functionals, and notations may be found in Sections \ref{kkk} and \ref{lll}.
\subsubsection{Introduction}\label{introrm}
Reverse Mathematics (RM hereafter) is a program in the foundations of mathematics initiated around 1975 by Friedman (\cites{fried,fried2}) and developed extensively by Simpson (\cite{simpson2}).  
The aim of RM is to identify the minimal axioms needed to prove theorems of ordinary, i.e.\ non-set theoretical, mathematics. 

\smallskip

We refer to \cite{stillebron} for a basic introduction to RM and to \cite{simpson2, simpson1} for an overview of RM.  We expect basic familiarity with RM, but do sketch some aspects of Kohlenbach's \emph{higher-order} RM (\cite{kohlenbach2}) essential to this paper, including the base theory $\RCAo$ (Definition \ref{kase}).  

\smallskip

First of all, in contrast to `classical' RM based on \emph{second-order arithmetic} $\Z_{2}$, higher-order RM uses $\L_{\omega}$, the richer language of \emph{higher-order arithmetic}.  
Indeed, while the former is restricted to natural numbers and sets of natural numbers, higher-order arithmetic can accommodate sets of sets of natural numbers, sets of sets of sets of natural numbers, et cetera.  
To formalise this idea, we introduce the collection of \emph{all finite types} $\mathbf{T}$, defined by the two clauses:
\begin{center}
(i) $0\in \mathbf{T}$   and   (ii)  If $\sigma, \tau\in \mathbf{T}$ then $( \sigma \di \tau) \in \mathbf{T}$,
\end{center}
where $0$ is the type of natural numbers, and $\sigma\di \tau$ is the type of mappings from objects of type $\sigma$ to objects of type $\tau$.
In this way, $1\equiv 0\di 0$ is the type of functions from numbers to numbers, and  $n+1\equiv n\di 0$.  Viewing sets as given by characteristic functions, we note that $\Z_{2}$ only includes objects of type $0$ and $1$.    

\smallskip

Secondly, the language $\L_{\omega}$ includes variables $x^{\rho}, y^{\rho}, z^{\rho},\dots$ of any finite type $\rho\in \mathbf{T}$.  Types may be omitted when they can be inferred from context.  
The constants of $\L_{\omega}$ include the type $0$ objects $0, 1$ and $ <_{0}, +_{0}, \times_{0},=_{0}$  which are intended to have their usual meaning as operations on $\N$.
Equality at higher types is defined in terms of `$=_{0}$' as follows: for any objects $x^{\tau}, y^{\tau}$, we have
\be\label{aparth}
[x=_{\tau}y] \equiv (\forall z_{1}^{\tau_{1}}\dots z_{k}^{\tau_{k}})[xz_{1}\dots z_{k}=_{0}yz_{1}\dots z_{k}],
\ee
if the type $\tau$ is composed as $\tau\equiv(\tau_{1}\di \dots\di \tau_{k}\di 0)$.  
Furthermore, $\L_{\omega}$ also includes the \emph{recursor constant} $\mathbf{R}_{\sigma}$ for any $\sigma\in \mathbf{T}$, which allows for iteration on type $\sigma$-objects as in the special case \eqref{special}.  Formulas and terms are defined as usual.  
One obtains the sub-language $\L_{n+2}$ by restricting the above type formation rule to produce only type $n+1$ objects (and related types of similar complexity).        

\subsubsection{The base theory of higher-order Reverse Mathematics}\label{rmbt}
We introduce Kohlenbach's base theory $\RCAo$, first introduced in \cite{kohlenbach2}*{\S2}.
\bdefi\label{kase} 
The base theory $\RCAo$ consists of the following axioms.
\begin{enumerate}
 \renewcommand{\theenumi}{\alph{enumi}}
\item  Basic axioms expressing that $0, 1, <_{0}, +_{0}, \times_{0}$ form an ordered semi-ring with equality $=_{0}$.
\item Basic axioms defining the well-known $\Pi$ and $\Sigma$ combinators (aka $K$ and $S$ in \cite{avi2}), which allow for the definition of \emph{$\lambda$-abstraction}. 
\item The defining axiom of the recursor constant $\mathbf{R}_{0}$: for $m^{0}$ and $f^{1}$: 
\be\label{special}
\mathbf{R}_{0}(f, m, 0):= m \textup{ and } \mathbf{R}_{0}(f, m, n+1):= f(n, \mathbf{R}_{0}(f, m, n)).
\ee
\item The \emph{axiom of extensionality}: for all $\rho, \tau\in \mathbf{T}$, we have:
\be\label{EXT}\tag{$\textsf{\textup{E}}_{\rho, \tau}$}  
(\forall  x^{\rho},y^{\rho}, \varphi^{\rho\di \tau}) \big[x=_{\rho} y \di \varphi(x)=_{\tau}\varphi(y)   \big].
\ee 
\item The induction axiom for quantifier-free formulas of $\L_{\omega}$.
\item $\QFAC^{1,0}$: the quantifier-free Axiom of Choice as in Definition \ref{QFAC}.
\end{enumerate}
\edefi
\noindent
Note that variables (of any finite type) are allowed in quantifier-free formulas of the language $\L_{\omega}$: only quantifiers are banned.
Recursion as in \eqref{special} is called \emph{primitive recursion}; the class of functionals obtained from $\mathbf{R}_{\rho}$ for all $\rho \in \mathbf{T}$ is called \emph{G\"odel's system $T$} of all (higher-order) primitive recursive functionals. 
%Moreover, we let $\INDD^{\omega}$ be the induction axiom for all formulas in $\L_{\omega}$.
\bdefi\label{QFAC} The axiom $\QFAC$ consists of the following for all $\sigma, \tau \in \textbf{T}$:
\be\tag{$\QFAC^{\sigma,\tau}$}
(\forall x^{\sigma})(\exists y^{\tau})A(x, y)\di (\exists Y^{\sigma\di \tau})(\forall x^{\sigma})A(x, Y(x)),
\ee
for any quantifier-free formula $A$ in the language of $\L_{\omega}$.
\edefi
As discussed in \cite{kohlenbach2}*{\S2}, $\RCAo$ and $\RCA_{0}$ prove the same sentences `up to language' as the latter is set-based and the former function-based.   
This conservation results is obtained via the so-called $\ECF$-interpretation, which we now discuss. 
\begin{rem}[The $\ECF$-interpretation]\label{ECF}\rm
The (rather) technical definition of $\ECF$ may be found in \cite{troelstra1}*{p.\ 138, \S2.6}.
Intuitively, the $\ECF$-interpretation $[A]_{\ECF}$ of a formula $A\in \L_{\omega}$ is just $A$ with all variables 
of type two and higher replaced by type one variables ranging over so-called `associates' or `RM-codes' (see \cite{kohlenbach4}*{\S4}); the latter are (countable) representations of continuous functionals.  
%The definition of associate may be found just below Definition \ref{FTP}.
The $\ECF$-interpretation connects $\RCAo$ and $\RCA_{0}$ (see \cite{kohlenbach2}*{Prop.\ 3.1}) in that if $\RCAo$ proves $A$, then $\RCA_{0}$ proves $[A]_{\ECF}$, again `up to language', as $\RCA_{0}$ is 
formulated using sets, and $[A]_{\ECF}$ is formulated using types, i.e.\ using type zero and one objects.  
\end{rem}
In light of the widespread use of codes in RM and the common practise of identifying codes with the objects being coded, it is no exaggeration to refer to $\ECF$ as the \emph{canonical} embedding of higher-order into second-order arithmetic. 

\smallskip

Finally as noted above, Theorem \ref{cant2} is provable in the base theory. 
\begin{thm}[\cite{simpson2}*{II.4.9}] The following is provable in $\RCA_{0}$.
For any sequence of real numbers $(x_{n})_{n\in \N}$, there is a real $y$ different from $x_{n}$ for all $n\in \N$.
\end{thm}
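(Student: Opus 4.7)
The plan is to carry out Cantor's diagonal construction of nested intervals in $\RCA_0$, producing a real $y$ that is provably distinct from every $x_n$. By primitive recursion (available in $\RCA_0$) I would define two sequences of rationals $(a_n)_{n\in\N}$ and $(b_n)_{n\in\N}$ with $a_n <_\Q b_n$ for all $n$, such that
\[
a_n \leq_\Q a_{n+1} <_\Q b_{n+1} \leq_\Q b_n, \quad b_n - a_n \leq_\Q \tfrac{1}{3^n},
\]
and crucially $x_n \notin_\R [a_{n+1}, b_{n+1}]$ in a witnessed sense, namely either $x_n <_\R a_{n+1}$ or $x_n >_\R b_{n+1}$, with a rational gap. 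Starting with $a_0 = 0$, $b_0 = 1$, the step from $n$ to $n+1$ trisects $[a_n, b_n]$ into three closed subintervals of length $(b_n - a_n)/3$: call them $L, M, R$. At least two of these subintervals have interiors disjoint from $x_n$, so I can effectively select one that is safely separated from $x_n$.

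The trisection step is the only delicate point: in $\RCA_0$ equality of reals is $\Pi^0_1$, so I cannot decide $x_n \in M$ outright. Instead I would use the fast Cauchy representation $[x_n](k) = q^{(n)}_k$ of Definition \ref{keepintireal} and compute a rational approximation of $x_n$ to precision much smaller than $(b_n - a_n)/3$ — concretely, find $k$ large enough that $2^{-k} <_\Q (b_n - a_n)/12$ and set $\tilde x := [x_n](k)$. Then by quantifier-free decidability of $<_\Q$ on the rationals (available in $\RCA_0$), I compare $\tilde x$ with the two trisection points $a_n + (b_n-a_n)/3$ and $a_n + 2(b_n-a_n)/3$ and use this to pick one of $L$ or $R$ so that the chosen interval's endpoints are separated from $x_n$ by at least $(b_n-a_n)/12$ (as a witnessed rational gap, hence as a strict inequality of reals). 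This primitive-recursive selection procedure defines $(a_{n+1}, b_{n+1})$ from $(a_n, b_n)$ and $x_n$.

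Next I would verify that $(a_n)_{n\in\N}$ is a fast Cauchy sequence of rationals: since $|a_{m+1} - a_m| \leq 3^{-m} \leq 2^{-m}$, a standard primitive-recursive rate $(a_{\pi(k)})_{k\in\N}$ produces a real $y \in \R$ in the sense of Definition \ref{keepintireal} (after composing with Kohlenbach's hat function if necessary to guarantee fast convergence). The same bounds give $a_n \leq_\R y \leq_\R b_n$ for every $n$.

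Finally, to conclude $y \neq_\R x_n$ for each fixed $n$: the construction ensured either $x_n <_\R a_{n+1}$ with a rational witness $\eps_n >_\Q 0$ such that $a_{n+1} - x_n \geq_\R \eps_n$, or symmetrically on the right. Since $y \geq_\R a_{n+1}$ in the first case (and $y \leq_\R b_{n+1}$ in the second), I obtain $|y - x_n| \geq_\R \eps_n >_\Q 0$, which is exactly the $\RCA_0$-formulation of $y \neq_\R x_n$. The main obstacle is purely bookkeeping: making sure every inequality used in the trisection step comes with an explicit rational gap so that the resulting separations are provable from the quantifier-free definitions of $<_\R$ and $=_\R$ in $\RCA_0$, without appealing to any classical case distinction on whether $x_n$ is exactly equal to a trisection point.
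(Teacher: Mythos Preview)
Your argument is correct and is precisely the standard Cantor nested-interval construction; the paper does not give its own proof of this statement but simply cites \cite{simpson2}*{II.4.9}, where Simpson proves it by the same trisection method you describe.
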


\subsubsection{Notations and the like}\label{kkk}
We introduce the usual notations for common mathematical notions, like real numbers, as also introduced in \cite{kohlenbach2}.  
\begin{defi}[Real numbers and related notions in $\RCAo$]\label{keepintireal}\rm~
\begin{enumerate}
 \renewcommand{\theenumi}{\alph{enumi}}
\item Natural numbers correspond to type zero objects, and we use `$n^{0}$' and `$n\in \N$' interchangeably.  Rational numbers are defined as signed quotients of natural numbers, and `$q\in \Q$' and `$<_{\Q}$' have their usual meaning.    
\item Real numbers are coded by fast-converging Cauchy sequences $q_{(\cdot)}:\N\di \Q$, i.e.\  such that $(\forall n^{0}, i^{0})(|q_{n}-q_{n+i}|<_{\Q} \frac{1}{2^{n}})$.  
We use Kohlenbach's `hat function' from \cite{kohlenbach2}*{p.\ 289} to guarantee that every $q^{1}$ defines a real number.  
\item We write `$x\in \R$' to express that $x^{1}:=(q^{1}_{(\cdot)})$ represents a real as in the previous item and write $[x](k):=q_{k}$ for the $k$-th approximation of $x$.    
\item Two reals $x, y$ represented by $q_{(\cdot)}$ and $r_{(\cdot)}$ are \emph{equal}, denoted $x=_{\R}y$, if $(\forall n^{0})(|q_{n}-r_{n}|\leq {2^{-n+1}})$. Inequality `$<_{\R}$' is defined similarly.  
We sometimes omit the subscript `$\R$' if it is clear from context.           
\item Functions $F:\R\di \R$ are represented by $\Phi^{1\di 1}$ mapping equal reals to equal reals, i.e.\ extensionality as in $(\forall x , y\in \R)(x=_{\R}y\di \Phi(x)=_{\R}\Phi(y))$.\label{EXTEN}
\item The relation `$x\leq_{\tau}y$' is defined as in \eqref{aparth} but with `$\leq_{0}$' instead of `$=_{0}$'.  Binary sequences are denoted `$f^{1}, g^{1}\leq_{1}1$', but also `$f,g\in C$' or `$f, g\in 2^{\N}$'.  Elements of Baire space are given by $f^{1}, g^{1}$, but also denoted `$f, g\in \N^{\N}$'.
\item For a binary sequence $f^{1}$, the associated real in $[0,1]$ is $\r(f):=\sum_{n=0}^{\infty}\frac{f(n)}{2^{n+1}}$.\label{detrippe}
%\item An object $\textbf{Y}^{0\di \rho}$ is called \emph{a sequence of type $\rho$ objects} and also denoted $\textbf{Y}=(Y_{n})_{n\in \N}$ or $\textbf{Y}=\lambda n. Y_{n}$ where $Y_{n}:=\textbf{Y}(n)$ for all $n^{0}$.
\item Sets of type $\rho$ objects $X^{\rho\di 0}, Y^{\rho\di 0}, \dots$ are given by their characteristic functions $F^{\rho\di 0}_{X}\leq_{\rho\di 0}1$, i.e.\ we write `$x\in X$' for $ F_{X}(x)=_{0}1$. \label{koer} 
\end{enumerate}
\end{defi}
For completeness, we list the following notational convention for finite sequences.  
\begin{nota}[Finite sequences]\label{skim}\rm
The type for `finite sequences of objects of type $\rho$' is denoted $\rho^{*}$, which we shall only use for $\rho=0,1$.  
Since the usual coding of pairs of numbers goes through in $\RCAo$, we shall not always distinguish between $0$ and $0^{*}$. 
Similarly, we assume a fixed coding for finite sequences of type $1$ and shall make use of the type `$1^{*}$'.  
In general, we do not always distinguish between `$s^{\rho}$' and `$\langle s^{\rho}\rangle$', where the former is `the object $s$ of type $\rho$', and the latter is `the sequence of type $\rho^{*}$ with only element $s^{\rho}$'.  The empty sequence for the type $\rho^{*}$ is denoted by `$\langle \rangle_{\rho}$', usually with the typing omitted.  

\smallskip

Furthermore, we denote by `$|s|=n$' the length of the finite sequence $s^{\rho^{*}}=\langle s_{0}^{\rho},s_{1}^{\rho},\dots,s_{n-1}^{\rho}\rangle$, where $|\langle\rangle|=0$, i.e.\ the empty sequence has length zero.
For sequences $s^{\rho^{*}}, t^{\rho^{*}}$, we denote by `$s*t$' the concatenation of $s$ and $t$, i.e.\ $(s*t)(i)=s(i)$ for $i<|s|$ and $(s*t)(j)=t(|s|-j)$ for $|s|\leq j< |s|+|t|$. For a sequence $s^{\rho^{*}}$, we define $\overline{s}N:=\langle s(0), s(1), \dots,  s(N-1)\rangle $ for $N^{0}<|s|$.  
For a sequence $\alpha^{0\di \rho}$, we also write $\overline{\alpha}N=\langle \alpha(0), \alpha(1),\dots, \alpha(N-1)\rangle$ for \emph{any} $N^{0}$.  By way of shorthand, 
$(\forall q^{\rho}\in Q^{\rho^{*}})A(q)$ abbreviates $(\forall i^{0}<|Q|)A(Q(i))$, which is (equivalent to) quantifier-free if $A$ is.   
\end{nota}
\subsubsection{Some axioms and functionals}\label{lll}
As noted in Section \ref{intro}, the logical hardness of a theorem is measured via what fragment of the comprehension axiom is needed for a proof.  
For this reason, we introduce some axioms and functionals related to \emph{higher-order comprehension} in this section.
We are mostly dealing with \emph{conventional} comprehension here, i.e.\ only parameters over $\N$ and $\N^{\N}$ are allowed in formula classes like $\Pi_{k}^{1}$ and $\Sigma_{k}^{1}$.  

\smallskip

First of all, the following functional is clearly discontinuous at $f=11\dots$; in fact, $(\exists^{2})$ is equivalent to the existence of $F:\R\di\R$ such that $F(x)=1$ if $x>_{\R}0$, and $0$ otherwise (\cite{kohlenbach2}*{\S3}).  This fact shall be repeated often.  
\be\label{muk}\tag{$\exists^{2}$}
(\exists \varphi^{2}\leq_{2}1)(\forall f^{1})\big[(\exists n)(f(n)=0) \asa \varphi(f)=0    \big]. 
\ee
Related to $(\exists^{2})$, the functional $\mu^{2}$ in $(\mu^{2})$ is also called \emph{Feferman's $\mu$} (\cite{avi2}).
\begin{align}\label{mu}\tag{$\mu^{2}$}
(\exists \mu^{2})(\forall f^{1})\big[ (\exists n)(f(n)=0) \di [f(\mu(f))=0&\wedge (\forall i<\mu(f))(f(i)\ne 0) ]\\
& \wedge [ (\forall n)(f(n)\ne0)\di   \mu(f)=0]    \big], \notag
\end{align}
We have $(\exists^{2})\asa (\mu^{2})$ over $\RCAo$ and $\ACAo\equiv\RCAo+(\exists^{2})$ proves the same sentences as $\ACA_{0}$ by \cite{hunterphd}*{Theorem~2.5}. 

\smallskip

Secondly, the functional $\SS^{2}$ in $(\SS^{2})$ is called \emph{the Suslin functional} (\cite{kohlenbach2}).
\be\tag{$\SS^{2}$}
(\exists\SS^{2}\leq_{2}1)(\forall f^{1})\big[  (\exists g^{1})(\forall n^{0})(f(\overline{g}n)=0)\asa \SS(f)=0  \big], 
\ee
The system $\FIVE^{\omega}\equiv \RCAo+(\SS^{2})$ proves the same $\Pi_{3}^{1}$-sentences as $\FIVE$ by \cite{yamayamaharehare}*{Theorem 2.2}.   
By definition, the Suslin functional $\SS^{2}$ can decide whether a $\Sigma_{1}^{1}$-formula as in the left-hand side of $(\SS^{2})$ is true or false.   We similarly define the functional $\SS_{k}^{2}$ which decides the truth or falsity of $\Sigma_{k}^{1}$-formulas from $\L_{2}$; we also define 
the system $\SIXK$ as $\RCAo+(\SS_{k}^{2})$, where  $(\SS_{k}^{2})$ expresses that $\SS_{k}^{2}$ exists.  
We note that the operators $\nu_{n}$ from \cite{boekskeopendoen}*{p.\ 129} are essentially $\SS_{n}^{2}$ strengthened to return a witness (if existant) to the $\Sigma_{n}^{1}$-formula at hand.  %  if it exists. 

\smallskip

\noindent
Thirdly, full second-order arithmetic $\Z_{2}$ is readily derived from $\cup_{k}\SIXK$, or from:
\be\tag{$\exists^{3}$}
(\exists E^{3}\leq_{3}1)(\forall Y^{2})\big[  (\exists f^{1})(Y(f)=0)\asa E(Y)=0  \big], 
\ee
and we therefore define $\Z_{2}^{\Omega}\equiv \RCAo+(\exists^{3})$ and $\Z_{2}^\omega\equiv \cup_{k}\SIXK$, which are conservative over $\Z_{2}$ by \cite{hunterphd}*{Cor.\ 2.6}. 
Despite this close connection, $\Z_{2}^{\omega}$ and $\Z_{2}^{\Omega}$ can behave quite differently, as discussed in e.g.\ \cite{dagsamIII}*{\S2.2}.   
The functional from $(\exists^{3})$ is also called `$\exists^{3}$', and we use the same convention for other functionals.  
\end{comment}
\bye